\definecolor{black}{rgb}{0.0, 0.0, 0.0}
\definecolor{red}{rgb}{1.0, 0.5, 0.5}
\newcommand{\margnote}[1]{
\ifthenelse{\boolean{shownotes}}%
{\marginpar{\raggedright\tiny\texttt{#1}}}%
{}%
}
\newcommand{\hole}[1]{
\ifthenelse{\boolean{shownotes}}%
{\begin{center} \fbox{ \rule {.25cm}{0cm} \rule[-.1cm]{0cm}{.4cm}
\parbox{.85\textwidth}{\begin{center} \texttt{#1}\end{center}} \rule
{.25cm}{0cm}}\end{center}} {} }
\title[Relative entropy estimates for Vlasov--Fokker--Planck equations]{A unified relative entropy framework for macroscopic limits of Vlasov--Fokker--Planck equations}
\author[Choi]{Young-Pil Choi}
\address[Young-Pil Choi]{\newline Department of Mathematics\newline
Yonsei University, 50 Yonsei-Ro, Seodaemun-Gu, Seoul 03722, Republic of Korea}
\email{ypchoi@yonsei.ac.kr}
\author[Jung]{Jinwook Jung}
\address[Jinwook Jung]{\newline Department of Mathematics and  Research Institute for Natural Sciences \newline
Hanyang University, 222 Wangsimni-ro, Seongdong-gu, Seoul 04763, Republic of Korea}
\email{jinwookjung@hanyang.ac.kr}
\numberwithin{equation}{section}
\newtheorem{theorem}{Theorem}[section]
\newtheorem{lemma}{Lemma}[section]
\newtheorem{corollary}{Corollary}[section]
\newtheorem{proposition}{Proposition}[section]
\newtheorem{remark}{Remark}[section]
\newtheorem{definition}{Definition}[section]
\newcommand{\R}{\mathbb R}
\newcommand{\ls}{\lesssim}
\newcommand{\T}{\mathbb T}
\newcommand{\bq}{\begin{equation}}
\newcommand{\eq}{\end{equation}}
\newcommand{\e}{\varepsilon}
\newcommand{\lt}{\left}
\newcommand{\rt}{\right}
\newcommand{\pa}{\partial}
\newcommand{\mh}{\mathcal{H}}
\newcommand{\intr}{\int_{\R^d}}
\newcommand{\intrr}{\iint_{\R^d \times \R^d}}
\newcommand{\into}{\int_{\R^d}}
\newcommand{\calM}{\mathcal M}
\newcommand{\calP}{\mathcal P}
\def\moverlay{\mathpalette\mov@rlay}
\def\mov@rlay#1#2{\leavevmode\vtop{%
   \baselineskip\z@skip \lineskiplimit-\maxdimen
   \ialign{\hfil$\m@th#1##$\hfil\cr#2\crcr}}}
\newcommand{\charfusion}[3][\mathord]{
    #1{\ifx#1\mathop\vphantom{#2}\fi
        \mathpalette\mov@rlay{#2\cr#3}
      }
    \ifx#1\mathop\expandafter\displaylimits\fi}
\begin{document}
\allowdisplaybreaks

\date{\today}

\keywords{Vlasov--Fokker--Planck equations, relative entropy method, modulated energies, macroscopic limits, quantitative convergence, drift-diffusion equation, aggregation equation, generalized surface quasi-geostrophic equation.}

\begin{abstract}  
We develop a unified relative entropy framework for macroscopic limits of kinetic equations with Riesz-type interactions and Fokker--Planck relaxation. Our analysis covers three prototypical singular regimes: the diffusive limit leading to a drift-diffusion equation, the high-field limit yielding the aggregation equation in the repulsive regime, and the strong magnetic field limit producing a generalized surface quasi-geostrophic equation. The method combines entropy dissipation, Fisher-information control, and modulated interaction energies into a robust stability theory yielding both strong and weak convergence results. For the strong convergence, we establish quantitative relative entropy estimates toward macroscopic limits under well-prepared initial data, extending the scope of the method to settings where nonlocal forces and singular scalings play a decisive role.  For the weak convergence, our approach captures three complementary phenomena: in the diffusive regime, it yields sharper quantitative estimates in weak topologies consistent with the formally optimal scaling; in the high-field regime, it propagates bounded Lipschitz stability for a class of mildly prepared initial data, even when the relative entropy diverges with respect to the singular scaling parameter; and in the strong magnetic field regime, it provides quantitative weak estimates, including bounded Lipschitz control of the rescaled momentum and negative Sobolev control of the density. This broader perspective shows that relative entropy provides not only a tool for strong convergence, but also a mechanism for treating low-regularity and mildly prepared regimes. The analysis highlights the unifying role of relative entropy in connecting microscopic dissipation with both strong and weak macroscopic convergence.
\end{abstract}

\maketitle \centerline{\date}

\tableofcontents

%
%
%
%
\section{Introduction}

One of the central themes in kinetic theory is the derivation of macroscopic models from microscopic or mesoscopic descriptions. Such asymptotic analyses not only provide a rigorous justification of effective equations but also reveal the precise mechanisms by which microscopic dissipation, transport, and interaction forces translate into collective macroscopic dynamics. 
The hydrodynamic limit program, initiated in classical works such as \cite{BGL93, BGL91,Caf80,CIP94}, has played a fundamental role in connecting kinetic equations (Boltzmann, Vlasov, Fokker--Planck) with fluid models (Euler, Navier--Stokes, drift-diffusion). Beyond its mathematical significance, this line of research underpins multiscale modeling in physics, biology, and engineering, where effective macroscopic equations are indispensable for large-scale simulations and qualitative understanding. 

A key challenge in this program lies in handling singular scalings, nonlocal interactions, or strong external fields. In these regimes, the usual compactness or moment methods do not provide sufficient control, and more refined tools such as entropy methods, Wasserstein stability estimates, or modulated energies have been developed, e.g. \cite{BIR25, BV05,  CC20, CC21, CCJ21, CCT19, C21, CJe21-2, CJ24, CT22, FK19, GSR99, Han11, SR04, Ser20, Vas08}. Establishing quantitative convergence results in such singular limits is particularly important: they not only provide error bounds for the approximation of kinetic dynamics by macroscopic models but also clarify the regimes of validity and the robustness of effective descriptions in low-regularity or partially prepared settings.

In this paper, we are interested in asymptotic analyses for Vlasov--Fokker--Planck equations, which describe the time evolution of a particle density $f=f(t,x,\xi)$ in phase space:
\bq\label{main1}
{\rm A}\pa_t f + {\rm B} \xi\cdot \nabla_x f  - \nabla_x (-\Delta_x)^{-\alpha} \rho_f \cdot \nabla_\xi f = \frac1\tau \nabla_\xi \cdot (\theta \nabla_\xi f + \xi   f), \quad t>0, \ (x,\xi) \in \R^d \times \R^d.
\eq
Here $d \ge 1$ denotes the spatial dimension, and $f$ represents the probability density (or distribution function) of particles at position $x \in \R^d$ with velocity $\xi \in \R^d$ at time $t$. The macroscopic density and momentum are given by
\[
\rho_f := \intr f\,d\xi, \quad \rho_f u_f := \intr \xi f\,d\xi.
\]
The parameter $\tau>0$ denotes the relaxation time, i.e. the characteristic scale at which the velocity distribution relaxes toward a global Maxwellian under the effect of the Fokker--Planck operator. Small values of $\tau$ correspond to fast equilibration in velocity space, while larger values emphasize the role of transport and long-range interactions.

The equation \eqref{main1} involves several parameters whose roles determine the asymptotic regimes and limiting dynamics. The coefficients ${\rm A}$ and ${\rm B}$ act as scaling factors for the time derivative and the transport term $\xi\cdot\nabla_x f$, respectively. 
Different choices of these parameters, together with the relaxation time $\tau$, select distinct asymptotic regimes, which we summarize below. The constant $\theta>0$ measures the strength of velocity diffusion and can be interpreted as the (scaled) temperature of the system. 
It balances deterministic drift and stochastic fluctuations in velocity space, and together with $\tau$ governs the relaxation toward Maxwellian equilibria.   Finally, the exponent $\alpha\in(0,1]$ specifies the type of interaction kernel: $\alpha=1$ corresponds to the Coulomb potential, while $\alpha\in(0,1)$ corresponds to Riesz-type interactions, which are longer ranged than the Coulomb case and lead to stronger nonlocal effects.

Our main results can be summarized in three representative asymptotic regimes:
\begin{itemize}
\item {\bf Diffusive limit.} Under the scaling ${\rm A}=\e$, ${\rm B}=1$, $\tau=\e$, weak entropy solutions converge to the aggregation--diffusion (McKean--Vlasov) equation with repulsive Riesz interaction. Quantitative stability estimates are obtained, showing convergence in both density and momentum variables.

\item {\bf High-field limit.} For the scaling ${\rm A}={\rm B}=\e$, $\tau=1$, solutions relax toward local Maxwellians whose dynamics are governed by a drift-driven aggregation equation. The kinetic transport is suppressed, and the limiting equation is purely nonlocal.

\item {\bf gSQG limit.} In the strong magnetic field regime ${\rm A}=\e$, ${\rm B}=\tau=1$, the kinetic equation contains an additional fast rotational transport term of the form 
\[
\frac1\e \xi^\perp\cdot\nabla_\xi f,
\]
which represents the cyclotronic motion generated by a constant external magnetic field.  This gyroscopic force induces rapid angular averaging in velocity space and effectively constrains the dynamics to two dimensions.  In combination with the Riesz interaction, the resulting macroscopic evolution is a generalized surface quasi-geostrophic (gSQG) equation with active scalar velocity $\nabla_x^\perp(-\Delta_x)^{-\alpha}\rho$.
\end{itemize}

Beyond these specific regimes, our approach relies on a unified relative entropy framework that applies to all three limits. This method provides not only qualitative convergence but also quantitative bounds. In particular, we establish stability estimates in strong topologies and, in appropriate regimes, obtain additional quantitative convergence results in weak topologies, including bounded Lipschitz distances and negative Sobolev estimates.
 
%
%
%
%
%

\subsection{Functional frameworks}

Here, we gather the main notions and tools that will be used throughout the asymptotic analysis. 

Two complementary classes of functionals will play a central role in our arguments:
\begin{itemize}
\item {\bf Energy and entropy-based functionals}, including the Boltzmann entropy, the free energy, the interaction energy, and the modulated energy, which provide quantitative control of the dynamics and encode the dissipative structure of the kinetic system;
\item {\bf Distance functionals and weak topologies}, including the bounded Lipschitz (BL) distance and negative Sobolev norms, which allow us to quantify convergence in weaker topologies and to obtain stability estimates adapted to different singular regimes.
\end{itemize}
Together, these ingredients form the functional framework underlying the statements and proofs of our main theorems.

We begin by introducing the local Maxwellian distribution
\[
M_{\rho, u,\theta}(\xi) := \frac{\rho}{(2\pi \theta)^{\frac d2}}\exp\lt( -\frac{|u-\xi|^2}{2\theta} \rt),
\]
where $\rho > 0$ and $u \in \R^d$. Since the temperature $\theta$ is a fixed constant, we set $\theta=1$ for notational simplicity and write $M_{\rho,u} := M_{\rho,u,1}$. Without loss of generality, we also assume that all distributions $f$ under consideration are normalized to have unit mass:
\[
\iint_{\R^d \times \R^d} f(t,x,\xi)\,dxd\xi = 1, \quad t \geq 0.
\]
This convention is consistent with the conservation of total mass under the kinetic dynamics and will simplify several entropy-related expressions in what follows.

%
%
%
%
%

\subsubsection{Relative entropy}
For a given macroscopic state $(\rho,u)$, the {\it relative entropy} of a distribution $f$ with respect to $M_{\rho,u}$ is defined by
\begin{equation}\label{def:rel-entropy}
\mathscr{H}[f | M_{\rho,u}] := \iint_{\R^d \times \R^d} f \log \frac{f}{M_{\rho,u}}  \, dx d\xi - \iint_{\R^d \times \R^d}  \lt(f - M_{\rho,u} \rt) dx d\xi.
\end{equation}
It quantifies the deviation of $f$ from the local equilibrium $M_{\rho,u}$. The functional \eqref{def:rel-entropy} is nonnegative, convex in $f$, and vanishes if and only if $f=M_{\rho,u}$. Moreover, it is closely related to the variational characterization of Maxwellians under prescribed mass and momentum constraints; see Section \ref{sec_pre} below.

In the present work, relative entropy serves as the basic microscopic stability functional. Combined with the interaction part of the modulated energy, it allows us to measure simultaneously the deviation from local Maxwellians and the mismatch of the associated density variable. This two-level structure is one of the key ingredients of our unified approach to the three singular limits considered in this paper. More specifically, relative entropy admits a natural decomposition into a microscopic part, measuring the deviation from the Maxwellian with the same local mass and momentum as $f$, and a macroscopic part, measuring the mismatch between these moments and the target state $(\rho,u)$. This structure will be made precise in Section \ref{sec_pre}; it is one of the key mechanisms that allows the relative entropy method to connect microscopic dissipation with macroscopic stability.
 
%
%
%
%
%
 
\subsubsection{Energy and modulated functionals}
A natural collection of energy functionals governs the kinetic dynamics and provides quantitative control of solutions. 
For a distribution $f=f(x,\xi)$, we begin with the {\it free energy} (also referred to as the {\it kinetic entropy}), defined as
\[
\mathscr{F}[f]  := \iint_{\R^d \times \R^d} \lt(\frac{|\xi|^2}2 + \log f\rt) f \,dxd\xi,
\]
which naturally decomposes into the {\it kinetic energy} 
\[
\mathscr{K}[f] := \iint_{\R^d\times\R^d} \frac{|\xi|^2}{2} f\,dxd\xi,
\]
and the {\it Boltzmann entropy}
\[
\mathscr{H}[f] := \iint_{\R^d\times\R^d} f\log f\,dxd\xi.
\]
Note that the free energy can be rewritten as 
\[
\mathscr{F}[f] =   \iint_{\R^d \times \R^d} f\log \frac{f}{M_{1,0}}\,dxd\xi - \frac d2 \log (2\pi).
\]

In addition, for the associated density $\rho_f$, we define the {\it interaction energy} (or {\it potential energy})
\[
\mathscr{P}[\rho_f] := \frac{1}{2} \into \rho_f (-\Delta_x)^{-\alpha}\rho_f\,dx,
\]
which accounts for the nonlocal forces induced by the Riesz kernel.  The operator $(-\Delta_x)^{-\alpha}$ can be expressed in terms of convolution with the Riesz potential 
\[
K(x) := \frac{c_{\alpha,d}}{|x|^{(d-2\alpha)}}, \quad c_{\alpha,d}>0, \ \alpha \in \lt(0, \min\lt\{1,\frac d2\rt\}\rt)
\]
so that
\[
(-\Delta_x)^{-\alpha}\rho = K \star \rho = c_{\alpha,d}\intr \frac{1}{|x-y|^{d-2\alpha}}\,\rho(y)\,dy.
\]
Accordingly, the interaction energy admits the representation
\[
\mathscr{P}[\rho] = \frac{c_{\alpha,d}}2\iint_{\R^d \times \R^d} \frac1{|x-y|^{d-2\alpha}} \rho(x)\rho(y)\,dxdy.
\]
Moreover, for $\alpha \in (0,1)$, one may also use the extension characterization of the fractional Laplacian \cite{CS07}. 
Introducing the extended kernel
\[
K(x,\tilde x) := \frac{c_{\alpha, d}}{|(x,\tilde x)|^{d-2\alpha}},
\]
and regarding $\rho$ as a distribution on $\R^d\times\R_+$ via $\rho\otimes\delta_0$, we have
\[
\intr K((x,\tilde x) - (y,0))\rho(y)\,dy =: (K \star (\rho \otimes \delta_0))(x,\tilde x).
\]
Then $K$ satisfies, in the distributional sense,
\[
- \nabla_{(x,\tilde x)} \cdot \lt(\tilde{x}^{1-2\alpha} \nabla_{(x,\tilde x)} K \rt) = \delta_0 \quad \mbox{on } \R^d \times \R_+.
\]
so that
\[
- \nabla_{(x,\tilde x)} \cdot \lt(\tilde{x}^{1-2\alpha} \nabla_{(x,\tilde x)} K \star (\rho \otimes \delta_0) \rt) = \rho(x) \otimes \delta_0(\tilde x).
\]
This yields yet another representation of the interaction energy:
\[
\mathscr{P}[\rho] = \iint_{\R^d \times \R_+} \tilde x^{1-2\alpha} |\nabla_{(x,\tilde x)} K \star (\rho \otimes \delta_0)(x,\tilde x) |^2\,dx d\tilde x.
\]
These alternative formulations are useful both conceptually and technically since they connect the nonlocal interaction to classical potential theory and weighted local elliptic problems. 
We refer to \cite{CJe21-2, CJ24, PS17, Ser20} and references therein for further developments on modulated energy estimates involving Riesz-type interactions.

The free energy $\mathscr{F}[f]$ and the interaction energy $\mathscr{P}[\rho_f]$ together yield the {\it total energy}
\[
\mathscr{F}[f] + \mathscr{P}[\rho_f],
\]
which dissipates along kinetic trajectories and provides a fundamental Lyapunov functional for the system. While $\mathscr{F}[f]$ and $\mathscr{P}[\rho]$ provide absolute energy bounds, they do not directly measure the proximity of a kinetic state to a prescribed macroscopic pair $(\rho,u)$. 
To capture both the microscopic relaxation towards local equilibria and the macroscopic convergence of densities, we introduce the {\it modulated energy functional}:
\begin{equation}\label{def:mod-energy}
\mathscr{E}[f | \rho, u] := \mathscr{H}[f | M_{\rho,u}] + \mathscr{P}[\rho_{f} | \rho],
\end{equation}
where the second term denotes the {\it relative potential energy}
\[
\mathscr{P}[\rho_{f} | \rho] = \frac12 \intr (\rho_{f}-\rho)(-\Delta_x)^{-\alpha}(\rho_{f}-\rho)\,dx = \frac12 \|\rho_{f}-\rho\|_{\dot H^{-\alpha}(\R^d)}^2.
\]

Here the first term in \eqref{def:mod-energy} quantifies the deviation of $f$ from the local Maxwellian $M_{\rho,u}$, while the second measures the distance of the corresponding densities. 

\begin{remark}Although we consider the spatial domain $\R^d$ for simplicity, all definitions and constructions extend naturally to the periodic setting $\T^d$. In that case, the interaction potential is defined up to an additive constant, and the normalization
\[
F[f] = - \nabla_x (-\Delta_x)^{-\alpha}(\rho_f - 1), \quad \int_{\T^d} \rho_f\,dx = 1,
\]
ensures that the potential energy remains well defined and the total interaction force vanishes. Moreover, since the relative potential energy only depends on the difference $\rho_f - \rho$, the same expression
\[
\mathscr{P}[\rho_{f} | \rho] = \frac12 \int_{\T^d} (\rho_{f}-\rho) (-\Delta_x)^{-\alpha}(\rho_{f}-\rho)\,dx = \frac12 \|\rho_{f}-\rho\|_{\dot H^{-\alpha}(\T^d)}^2
\]
holds on $\T^d$ without any modification, provided that $\int_{\T^d} \rho\,dx = 1$. 

Consequently, all subsequent modulated energy and relative entropy estimates remain valid in both domains with only mild adjustments to the regularity assumptions for the macroscopic limit equations.
\end{remark}

The functional $\mathscr{E}[f | \rho, u]$ thus provides a robust stability measure, simultaneously at the kinetic and macroscopic levels. 
It plays a central role in the relative entropy method: although $\mathscr{E}$ is not, in general, dissipative, it admits a Gr\"onwall-type control (with remainders depending on the scaling parameter), and when combined with the monotone decay of the absolute free energy $\mathscr{F}[f]+\mathscr{P}[\rho]$, it yields strong, quantitative convergence estimates toward the limiting macroscopic dynamics. 
In particular, the modulated energy is the key ingredient in defining the notion of {\it weak entropy solutions}, introduced in the next subsection. 
The structure of $\mathscr{F}[f]$ and $\mathscr{P}[\rho_f]$ outlined here will serve as the foundation for that definition, ensuring that the weak formulation inherits the essential energy-dissipative features of the kinetic model.

%
%
%
%
%
\subsubsection{Bounded Lipschitz distance}
Besides entropy-based functionals, we also employ weaker topologies to quantify convergence of distributions. Depending on the scaling regime, these are measured either by bounded Lipschitz distances or by negative Sobolev norms. In particular, while negative Sobolev spaces are useful for capturing quantitative convergence in the diffusive regime, the bounded Lipschitz distance is especially well-suited for weak stability estimates at the level of measures.

For a metric space $(X,{\rm d}_X)$ and finite Borel measures $\mu,\nu$ on $X$, the {\it BL distance} is defined by
\[ 
{\rm d}_{\rm BL}^X(\mu,\nu)
:= \sup_{\|\phi\|_{L^\infty(X)} + {\rm Lip}_X(\phi)\le 1}
\lt| \int_X \phi \, d\mu - \int_X \phi \, d\nu \rt|.
\] 
This metric induces the topology of weak (narrow) convergence of measures. 
In our setting, $X$ will typically be the spatial domain $\R^d$, or the phase space $\R^d\times\R^d$. 
To simplify notation, we shall often write ${\rm d}_{\rm BL}$ without a superscript when the underlying domain is clear from the context.

\medskip
\noindent{\it Time--space variant.}
For measure-valued curves $\{\mu_t\},\{\nu_t\}$ on $\R^d$, we also use the BL distance on the cylinder $(0,T)\times \R^d$, defined by
\[
{\rm d}_{\rm BL}^T(\mu,\nu)
:= \sup_{\|\phi\|_{W^{1,\infty}((0,T)\times \R^d)} \le 1}
\lt| \int_0^T \intr \phi(t,x)\, d\mu_t(x)dt - \int_0^T \intr \phi(t,x)\, d\nu_t(x)dt \rt|.
\]
This provides a convenient way to measure weak convergence in time-space without requiring uniform integrability of higher moments. On bounded domains, ${\rm d}_{\rm BL}$ is equivalent to the first-order Wasserstein distance, up to constants depending only on the diameter of the domain. Hence, on a bounded domain either metric can be used interchangeably to capture weak convergence. 
On unbounded domains such as $\R^d$, however, ${\rm d}_{\rm BL}$ retains finiteness without imposing finite first-moment assumptions, which makes it particularly suitable for our analysis.
 
Beyond these structural advantages, the BL distance plays a crucial role in our quantitative estimates. 
Through Lemma \ref{lem_gd2} below, the dissipation terms appearing in the relative entropy inequality can be converted into control of time-integrated BL distances between $f^\e$ and the corresponding local Maxwellians.  This mechanism allows us to complement the relative entropy framework with quantitative weak stability bounds.  It is particularly useful for the high-field limit, where weak convergence estimates are established under a mildly well-prepared assumption on the initial data.
 
 Before closing this part, we introduce several simplified notations that will be used throughout this paper. For functions $f(x,v)$ and $g(x)$, when there is no risk of confusion, we will use  $\|f\|_{L^p}$ and $\|g\|_{L^p}$ to denote the usual $L^p(\R^d \times \R^d)$-norm and $L^p(\R^d)$-norm, respectively.   We denote by $\mathcal{M}(\R^n)$ the space of finite signed Radon measures on $\R^n$, 
and by $\calP_p(\R^n)$, $p \in [1,\infty]$, the space of probability measures on $\R^n$ with finite $p$-th moment.  We also use a velocity-weighted $L^1$-norm defined by 
 \[
 \|f\|_{L^1_2} := \iint_{\R^d \times \R^d} (1+|\xi|^2)f\,dxd\xi. 
 \]
 Throughout the paper, we use $C>0$ to denote a generic constant which may change from line to line. We also write $f \ls g$ if there exists a constant $C>0$ such that $f \leq Cg$. Finally, we omit the $x$-dependence of differential operators for simplicity, writing, for instance, $\nabla f = \nabla_x f$ and $\Delta f = \Delta_x f$.

%
%
%
%
%
\subsection{Main results}

Our analysis is based on the unified framework of {\it weak entropy solutions} to the kinetic equation \eqref{main1}. This notion provides a natural relative entropy framework, capturing precisely the structural properties of kinetic solutions that allow one to exploit the entropy dissipation and to carry out the compactness arguments required for the singular limit analysis (see, e.g. \cite{GM10, GSR99, Gou05, NPS01, PS00}). In particular, it gives a consistent setting in which one can rigorously connect the kinetic dynamics to the macroscopic models derived under different scaling regimes.

\begin{definition}\label{def_weak}Let $T>0$. We say that $f$ is a weak entropy solution to \eqref{main1} with initial data $f_0$ if the following conditions hold:
\begin{enumerate}[label=(\roman*)]
\item $f \in L^\infty([0,T]; L^1(\R^d \times \R^d, (1+ |\xi|^2)\,dxd\xi) \cap L \log L(\R^d \times \R^d))$,
\item $f$ satisfies \eqref{main1} in the sense of distributions,
\item the total energy inequality holds for almost every $t \in [0,T]$:
\begin{align*}
& \mathscr{F}[f(t)] + \frac1{\rm B}\mathscr{P}[\rho_{f}(t)] +  \frac1{\tau {\rm A}} \int_0^t \iint_{\R^d \times \R^d} \frac1{f}|\nabla_\xi f + \xi  f|^2 \,dxd\xi ds  \leq \mathscr{F}[f_0] +\frac1{\rm B} \mathscr{P}[\rho_{f_0}].
\end{align*}
\end{enumerate}
\end{definition}

With this framework in place, we now turn to the analysis of singular limits under different scaling regimes. 
In the following subsections, we present our main results concerning three distinct limits: 
the {\it diffusive limit}, the {\it high-field limit}, and the {\it generalized surface quasi-geostrophic (gSQG) limit}. 
Each of these regimes highlights a different balance between kinetic transport, nonlocal interaction, and Fokker--Planck relaxation, and the convergence analysis relies on the weak entropy solution framework introduced above.

%
%
%
%
%
\subsubsection{Diffusive limit}\label{sssec_diff} We begin by considering the diffusive scaling 
\[
{\rm A}=\e, \quad {\rm B}=1, \quad \tau=\e.
\]
Such a choice of parameters naturally arises from dimensionless scaling arguments in kinetic theory (see, e.g., \cite{ACGS01, BG08, PS00, WLL15}), and this regime is also referred to in the literature as the {\it low-field limit}. Under this regime, the kinetic equation \eqref{main1} reduces to 
\bq\label{eq_diff_sca}
\e \pa_t f^\e +  \xi\cdot \nabla f^\e  - \nabla (-\Delta)^{-\alpha} \rho_{f^\e} \cdot \nabla_\xi f^\e = \frac1\e\nabla_\xi \cdot (\nabla_\xi f^\e + \xi   f^\e).
\eq
This scaling reflects the regime where the relaxation mechanism dominates, and the system exhibits a drift-diffusion behavior at the macroscopic level. 
  We now outline the formal derivation of the limiting equation as $\e \to 0$. By integrating against the velocity moments $1$ and $\xi$, we readily obtain
\begin{align*}
&\e \pa_t \rho_{f^\e} + \nabla \cdot m_{f^\e} = 0,\cr
&\e \pa_t m_{f^\e} + \nabla \cdot \intr \xi \otimes \xi f^\e\,d\xi = - \rho_{f^\e} \nabla (-\Delta)^{-\alpha} \rho_{f^\e} - \frac1\e m_{f^\e},
\end{align*}
where the local momentum is defined as
\[
m_f := \rho_f u_f = \intr \xi f\,d\xi.
\]
The strong friction term $-\frac1\e m_{f^\e}$ in the momentum equation indicates rapid relaxation of the momentum variable, so that in the small-$\e$ regime we expect $m_{f^\e}$ to be determined by the density $\rho_{f^\e}$. Replacing the momentum variable $m_{f^\e}$ by its limiting expression in terms of the density $\rho_{f^\e}$ leads formally to
\bq\label{eq_fd}
\pa_t \rho_{f^\e} - \nabla \cdot (\rho_{f^\e} \nabla (-\Delta)^{-\alpha}\rho_{f^\e}) =  \nabla\otimes \nabla : \intr \xi \otimes \xi f^\e\,d\xi + \e^2 \nabla \cdot \pa_t m_{f^\e}.
\eq
To identify the limit of the second-order moment, we observe that the right-hand side of \eqref{eq_diff_sca} can be rewritten as
\[
\nabla_\xi \cdot (\nabla_\xi f^\e + \xi   f^\e) = \nabla_\xi \cdot \lt(f^\e \nabla_\xi \log \lt(\frac{f^\e}{M_{\rho_{f^\e},0}} \rt)\rt),
\]
which highlights the entropy-dissipating structure of the linear Fokker--Planck operator. This form reveals that, for $\e \ll 1$, the solution $f^\e$ should be close to the local Maxwellian $M_{\rho_{f^\e},0}$. Consequently, 
\[
\nabla\otimes \nabla : \intr \xi \otimes \xi f^\e\,d\xi \sim \nabla\otimes \nabla : \intr \xi \otimes \xi M_{\rho_{f^\e}, 0}(\xi)\,d\xi = \Delta \rho_{f^\e}.
\]
By substituting this asymptotic relation into \eqref{eq_fd} and neglecting higher-order terms, we obtain the limiting macroscopic equation
\bq\label{eq_agdi}
\partial_t \rho - \nabla \cdot (\rho \nabla (-\Delta)^{-\alpha} \rho) = \Delta \rho.
\eq
This equation is commonly referred to as the {\it aggregation--diffusion equation}, or the {\it McKean--Vlasov equation}, here considered in the repulsive interaction regime. It can be interpreted as the Wasserstein gradient flow of the macroscopic free energy functional
\[
\mathscr{F}_{\rm AD}[\rho]:=\into \rho \log \rho\,dx + \frac{1}{2} \into \rho (-\Delta)^{-\alpha}\rho\,dx.
\]
The entropic term $\into \rho \log \rho\,dx$ gives rise to the linear diffusion, while the interaction term $\mathscr{P}[\rho]$ induces additional repulsive effects through the Riesz kernel. Thus, the diffusive limit reflects a balance between entropy-driven spreading and nonlocal repulsion, and it naturally fits into the general theory of Wasserstein gradient flows \cite{AGS08}.
 
This formal derivation indicates that, in the limit $\e \to 0$, the macroscopic density $\rho$ evolves according to the aggregation--diffusion equation \eqref{eq_agdi}. We now make this rigorous. More precisely, under suitable assumptions on the initial data, weak entropy solutions of the kinetic equation \eqref{eq_diff_sca} converge to the regular solution $\rho$ of \eqref{eq_agdi}.

Our first result provides quantitative convergence estimates in the natural modulated energy topology, yielding strong control of both the kinetic density and the momentum variable. In addition, by exploiting the parabolic structure of the limiting equation, we also derive a sharper estimate for the density in a weaker topology of negative Sobolev type. This low-regularity estimate is particularly useful in the diffusive regime: although the topology is weaker, it is well adapted to the kinetic-to-macroscopic passage and captures the optimal-order convergence rate predicted by the formal expansion.

\begin{theorem}[Diffusive limit]\label{thm_kin1} Let $T>0$, and let $\rho \in L^\infty([0,T];\calP(\R^d))$ be the unique classical solution to \eqref{eq_agdi} satisfying
\[
 \nabla (-\Delta)^{-\alpha} \rho + \nabla \log \rho \in L^\infty(0,T; W^{1,\infty}(\R^d)) \cap W^{1,\infty}(0,T; L^\infty(\R^d)).
\]
Let $\{f^\e\}_{\e > 0}$ be a family of weak entropy solutions to the equation \eqref{eq_diff_sca} on the time interval $[0,T]$, with initial data $\{f^\e_0\}_{\e > 0}$ satisfying
\[
\sup_{0<\e <1}\e^2 \lt(\mathscr{K}[f^\e_0]  + \mathscr{P}[\rho_{f^\e_0}]\rt) < \infty.
\]

If the initial data is  well-prepared  in the sense that
\[
  \mathscr{H}[f^\e | M_{\rho, 0}](0) + \mathscr{P}[\rho_{f^\e} | \rho](0) \to 0 \quad \mbox{as } \e \to 0,
\]
then we have
\begin{align*}
f^\e &\to M_{\rho, 0} \quad \mbox{in } L^\infty(0,T; L^1(\R^d \times \R^d)), \cr
  \intr f^\e\,d\xi &\to \rho \quad \mbox{in } L^\infty(0,T; L^1 \cap \dot{H}^{-\alpha}(\R^d)),\cr
 \frac1\e \intr \xi f^\e\,d\xi &\to -\rho\lt(\nabla (-\Delta)^{-\alpha} \rho + \nabla \log \rho \rt) \quad \mbox{in } L^1((0,T) \times \R^d).
\end{align*}
In addition, the following stability estimate holds:
\begin{align*}
&\|f^\e - M_{\rho, 0}\|_{L^\infty(0,T;L^1)}^2 + \|\rho_{f^\e} - \rho\|_{L^\infty(0,T;L^1\cap \dot{H}^{-\alpha})}^2 + \|\rho_{f^\e} \frac{u_{f^\e}}{\e} + \rho(\nabla (-\Delta)^{-\alpha} \rho + \nabla \log \rho)\|_{L^1((0,T) \times \R^d)}^2\cr
 &\quad \leq C\lt( \mathscr{H}[f^\e | M_{\rho, 0}](0) + \mathscr{P}[\rho_{f^\e} | \rho](0)\rt) + C\e^2.
\end{align*}

If we further assume that $\rho \in L^\infty(0,T;H^{\beta+1}(\R^d))$ for some $\beta>\frac d2+1$, then the density error also satisfies the following quantitative estimate in the negative Sobolev space $H^{-\beta}(\R^d)$:
\[
\|\rho_{f^\e} -\rho\|_{L^2(0,T;H^{-\beta})} \le C\lt(\|\rho_{f_0^\e} -\rho_0\|_{H^{-\beta}}+ \mathscr{H}[f^\e | M_{\rho, 0}](0)  + \mathscr{P}[\rho_{f^\e} | \rho](0) + \e^2\rt).
\]

Here, the constant $C>0$ is independent of $\e>0$, depending only on the initial data, $T$, and the limiting solution $\rho$.
\end{theorem}

\begin{remark} 
Formal computations in Appendix \ref{app_optimal_diff} show that 
\[
f^\e = M_{\rho,0} + \e f_1 + O(\e^2) \quad \text{with } \intr f_1\,d\xi = 0.
\]
Accordingly, one expects the convergence rates
\[
f^\e - M_{\rho,0} = O(\e), \quad \rho_{f^\e} - \rho = O(\e^2), \quad \text{and} \quad  \rho_{f^\e} \frac{u_{f^\e}}{\e} + \rho(\nabla (-\Delta)^{-\alpha} \rho + \nabla \log \rho) = O(\e)
\]
to be essentially optimal.

Moreover, introducing the effective velocity
\[
{\rm u}_\e:=-\e (\nabla(-\Delta)^{-\alpha}\rho+\nabla\log\rho),
\]
the first-order corrector is exactly captured by the linearization of the local Maxwellian $M_{\rho,{\rm u}_\e}$ around $M_{\rho,0}$. Indeed, one formally obtains the improved approximation
\[
f^\e-M_{\rho,{\rm u}_\e}=O(\e^2).
\]
Thus, while $M_{\rho,0}$ approximates the microscopic distribution only at order $O(\e)$, the shifted Maxwellian $M_{\rho,{\rm u}_\e}$ incorporates the leading drift correction and provides a second-order approximation. This refined convergence can also be justified rigorously in a weaker negative Sobolev topology, see Remark \ref{rmk_opt_diff} for more details below.

In particular, the estimate in the weaker topology of negative Sobolev spaces is consistent with the diffusive scaling and provides a natural framework in which this optimal-order behavior can be quantified.
 \end{remark}

 \begin{remark}
In the case of the Coulomb potential, i.e. $\alpha = 1$, the global-in-time existence theory for the limit equation \eqref{eq_diff_sca} is well established. We refer to \cite{Bou93, CS95, Deg86} for the global-in-time existence of weak and strong solutions, and to \cite{GM10} for the construction of global renormalized solutions. When $\alpha = \tfrac12$, corresponding to the so-called Manev potential, the global-in-time existence of weak solutions has been obtained in \cite{CJe23}. 

The proof strategy used for the Manev potential in \cite{CJe23} can in fact be extended to cover the whole range $\alpha \in [\tfrac12,1)$, although rigorous results in this direction are not yet available in the literature. By contrast, for $\alpha \in (0,\tfrac12)$, the global-in-time existence of renormalized or weak solutions, to the best of our knowledge, remains open and appears to be highly challenging. However, for small and smooth initial data, global solutions can be constructed; see, for example, \cite{Cha23, CJe24, CJK24}. For the existence of solutions to the aggregation--diffusion equation \eqref{eq_agdi} with the regularity required in Theorem \ref{thm_kin1}, we provide a detailed discussion in Appendix \ref{app_reg}.
\end{remark}

%
%
%
%
%
\subsubsection{High-field limit} We next turn to the high-field scaling, corresponding to
\[
{\rm A}={\rm B}=\e  \quad \mbox{and} \quad \tau=1.
\]
This scaling also stems from standard dimensionless analysis of the kinetic equation, corresponding to the so-called high-field regime (see, e.g., \cite{ACGS01, GNPS05, NPS01}). Under this scaling, the kinetic equation \eqref{main1} becomes
\bq\label{eq_high_sca}
\e \pa_t f^\e + \e \xi\cdot \nabla f^\e  - \nabla (-\Delta)^{-\alpha} \rho_{f^\e} \cdot \nabla_\xi f^\e = \nabla_\xi \cdot (\nabla_\xi f^\e + \xi   f^\e).
\eq
This regime corresponds to a situation in which the interaction force and the Fokker--Planck relaxation determine the leading-order behavior, while the kinetic transport terms are suppressed by the small parameter $\e$. More precisely, \eqref{eq_high_sca} can be rewritten as
\[
\e \pa_t f^\e + \e \xi\cdot \nabla f^\e  = \nabla_\xi \cdot \lt(f^\e \nabla_\xi \log \lt(\frac{f^\e}{M_{\rho_{f^\e},{\rm u}^\e}} \rt)\rt),
\]
where 
\[ 
{\rm u}^\e := - \nabla (-\Delta)^{-\alpha} \rho_{f^\e}.
\] 
This formulation highlights that, in the limit $\e \to 0$, the right-hand side enforces a rapid relaxation of $f^\e$ towards the local Maxwellian $M_{\rho_{f^\e},{\rm u}^\e}$. In other words, $f^\e$ is expected to remain close to this local equilibrium, while the residual contributions from transport are negligible at leading order.
 
 To outline the formal derivation of the high-field limit, we proceed as in the diffusive regime by investigating the dynamics of the velocity moments. Multiplying \eqref{eq_high_sca} by $1$ and $\xi$, and integrating over the velocity variable, we obtain the continuity and momentum balance equations:
\begin{align*}
&\pa_t \rho_{f^\e} + \nabla \cdot m_{f^\e} = 0,\cr
&\e \pa_t m_{f^\e} + \e \nabla \cdot \intr \xi \otimes \xi f^\e\,d\xi = - \rho_{f^\e} \nabla (-\Delta)^{-\alpha} \rho_{f^\e} -  m_{f^\e}.
\end{align*}
Formally eliminating the momentum variable, we arrive at
\[
\pa_t \rho_{f^\e} - \nabla \cdot (\rho_{f^\e} \nabla (-\Delta)^{-\alpha}\rho_{f^\e}) =  \e \nabla\otimes \nabla : \intr \xi \otimes \xi f^\e\,d\xi + \e \nabla \cdot \pa_t m_{f^\e}.
\]
Passing to the limit $\e \to 0$, at the formal level, the remainder terms vanish, and we obtain the limiting equation
\bq\label{eq_ag}
\partial_t \rho -  \nabla \cdot (\rho \nabla (-\Delta)^{-\alpha} \rho) =0.
\eq
This equation corresponds to the well-known {\it aggregation equation}, here considered in the repulsive interaction regime. It preserves mass and exhibits a gradient-flow structure with respect to the interaction energy $\mathscr{P}[\rho]$ in the 2-Wasserstein metric. In this setting, the entropic contribution is absent, and the dynamics are driven solely by the nonlocal repulsive effects induced by the Riesz kernel. This interpretation parallels that of the diffusive limit, where the governing free energy functional $\mathscr{F}_{\rm AD}[\rho]$ combines entropy and interaction terms, whereas the high-field regime corresponds to the purely interaction-driven flow associated with $\mathscr{P}[\rho]$.

We next provide a rigorous justification of this limit. The following theorem establishes the convergence of weak entropy solutions of \eqref{eq_high_sca} towards solutions of the aggregation equation \eqref{eq_ag} under suitable assumptions on the initial data.

\begin{theorem}[High-field limit]\label{thm_kin2}
 Let $T>0$, and let $\rho \in L^\infty([0,T];\calP(\R^d))$ be the unique classical solution to \eqref{eq_ag} satisfying
\[
\nabla (-\Delta)^{-\alpha} \rho \in L^\infty(0,T; W^{1,\infty}(\R^d)) \cap W^{1,\infty}(0,T; L^\infty(\R^d)) \quad \mbox{and} \quad \nabla \log \rho \in L^\infty((0,T) \times \R^d).
\]
Let $\{f^\e\}_{\e > 0}$ be a family of weak entropy solutions to the equation \eqref{eq_high_sca} on the time interval $[0,T]$, with initial data   $\{f^\e_0\}_{\e > 0}$ satisfying
\[
 \sup_{0 < \e < 1}\e\lt(\mathscr{K}[f^\e_0] + \frac1\e \mathscr{P}[\rho_{f^\e_0}]\rt) < \infty.
\]

If the initial data is well-prepared in the sense that
\[
\lt( \mathscr{H}[f^\e | M_{\rho, -\nabla (-\Delta)^{-\alpha} \rho}](0) + \frac1\e\mathscr{P}[\rho_{f^\e} | \rho](0)\rt) \to 0 \quad \mbox{as } \e \to 0,
\]
then we have
\begin{align*}
f^\e &\to M_{\rho, -\nabla (-\Delta)^{-\alpha} \rho} \quad \mbox{in } L^\infty(0,T; L^1(\R^d \times \R^d)), \cr
\intr f^\e\,d\xi &\to \rho \quad \mbox{in } L^\infty(0,T; L^1 \cap \dot{H}^{-\alpha}(\R^d)),\cr
 \intr \xi f^\e\,d\xi &\to  -\rho\nabla (-\Delta)^{-\alpha}\rho \quad \mbox{in } L^\infty(0,T; L^1(\R^d)).
\end{align*}
Moreover, we obtain the quantitative bound estimates:
\[
 \|\rho^\e - \rho\|_{L^\infty(0,T;\dot{H}^{-\alpha})}^2 \leq C\lt(\e \mathscr{H}[f^\e | M_{\rho, -\nabla (-\Delta)^{-\alpha} \rho}](0) +   \mathscr{P}[\rho_{f^\e} | \rho](0) + \e^2\rt)
 \]
 and
\begin{align*}
&\|f^\e - M_{\rho, -\nabla (-\Delta)^{-\alpha} \rho}\|_{L^\infty(0,T;L^1)}^2 + \|\rho_{f^\e} - \rho\|_{L^\infty(0,T;L^1)}^2 + \|\rho_{f^\e} u_{f^\e}  + \rho\nabla (-\Delta)^{-\alpha} \rho\|_{L^\infty(0,T;L^1)}^2\cr
 &\quad \leq C\lt( \mathscr{H}[f^\e | M_{\rho, -\nabla (-\Delta)^{-\alpha} \rho}](0) + \frac1\e \mathscr{P}[\rho_{f^\e} | \rho](0) + \e\rt).
\end{align*}

If the initial data is  mildly well-prepared in the sense that
\[
{\rm d}_{\rm BL}(\rho_{f^\e_0}, \rho_0) \to 0  \quad \mbox{and} \quad \e \mathscr{H}[f^\e | M_{\rho, -\nabla (-\Delta)^{-\alpha} \rho}](0) +   \mathscr{P}[\rho_{f^\e} | \rho](0) \to 0 \quad \mbox{as } \e \to 0,
\]
then we have
\begin{align*}
f^\e &\overset{*}{\rightharpoonup} M_{\rho, -\nabla (-\Delta)^{-\alpha} \rho} \quad \mbox{in } L^2(0,T; \calM(\R^d \times \R^d)), \cr
\intr f^\e\,d\xi &\overset{*}{\rightharpoonup} \rho \quad \mbox{in } L^\infty(0,T; \mathcal{M}(\R^d)),\cr
  \intr \xi f^\e\,d\xi &\overset{*}{\rightharpoonup} -\rho\nabla (-\Delta)^{-\alpha}\rho \quad \mbox{in }  L^2(0,T; \mathcal{M}(\R^d)) .
  \end{align*}
In addition, the following stability estimate holds:
\begin{align*}
&\sup_{0 \leq t \leq T} {\rm d}^2_{\rm BL}(\rho_{f^\e}(t), \rho(t))  + \int_0^t {\rm d}^2_{\rm BL}(\rho_{f^\e} u_{f^\e}, -\rho\nabla (-\Delta)^{-\alpha} \rho)\,ds + \int_0^t {\rm d}^2_{\rm BL}(f^\e, M_{\rho, -\nabla (-\Delta)^{-\alpha} \rho})\,ds \cr
&\quad \leq C  {\rm d}^2_{\rm BL}(\rho_{f^\e_0}, \rho_0) + C\lt(\e \mathscr{H}[f^\e | M_{\rho, -\nabla (-\Delta)^{-\alpha} \rho}](0) +   \mathscr{P}[\rho_{f^\e} | \rho](0) + \e^2\rt).
\end{align*}
Here, the constant $C>0$ is independent of $\e>0$, depending only on the initial data, $T$, and the limiting solution $\rho$.
\end{theorem}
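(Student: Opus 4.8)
The plan is to run a Grönwall argument on the $\e$-weighted modulated energy
\[
\mathscr{E}_\e(t) := \mathscr{H}[f^\e \,|\, M_{\rho,u}](t) + \frac1\e\,\mathscr{P}[\rho_{f^\e}\,|\,\rho](t), \qquad u := -\nabla(-\Delta)^{-\alpha}\rho,
\]
whose weighting is dictated by the total energy inequality of Definition \ref{def_weak}: for this scaling both the Fokker--Planck dissipation and the potential energy carry a factor $\tfrac1\e$. Once an estimate of the form $\mathscr{E}_\e(t)\le C(\mathscr{E}_\e(0)+\e)$ is in place, all stated bounds follow. The Csiszár--Kullback--Pinsker inequality turns $\mathscr{H}[f^\e|M_{\rho,u}]$ into $L^1$-control of $f^\e-M_{\rho,u}$ (and, after integrating in $\xi$, of $\rho_{f^\e}-\rho$); the identity $\tfrac1\e\mathscr{P}[\rho_{f^\e}|\rho]=\tfrac1{2\e}\|\rho_{f^\e}-\rho\|_{\dot H^{-\alpha}}^2$ gives the $\dot H^{-\alpha}$-bound after multiplying through by $\e$ (hence the remainder $\e^2$ and the initial combination $\e\mathscr{H}(0)+\mathscr{P}(0)$); and the momentum bound follows from the decomposition $\rho_{f^\e}u_{f^\e}+\rho\nabla(-\Delta)^{-\alpha}\rho = (m_{f^\e}-{\rm u}^\e\rho_{f^\e}) + ({\rm u}^\e-u)\rho_{f^\e} + u(\rho_{f^\e}-\rho)$, where the first piece is controlled by the dissipation via Cauchy--Schwarz and the remaining two by $\mathscr{E}_\e$.

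\textbf{Entropy evolution and the $\tfrac1\e$-cancellation.} First I would differentiate $\mathscr{H}[f^\e|M_{\rho,u}]$ using the divergence form $\e\pa_t f^\e + \e\xi\cdot\nabla f^\e = \nabla_\xi\cdot(f^\e\nabla_\xi\log(f^\e/M_{\rho_{f^\e},{\rm u}^\e}))$ recorded above. Testing against $\log(f^\e/M_{\rho,u})$ and integrating by parts yields the coercive term $-\tfrac1\e\iint f^\e|D|^2$, with $D := \nabla_\xi\log(f^\e/M_{\rho_{f^\e},{\rm u}^\e})$, together with a cross term $-\tfrac1\e\int ({\rm u}^\e-u)\cdot(m_{f^\e}-{\rm u}^\e\rho_{f^\e})$ obtained from $\nabla_\xi\log(f^\e/M_{\rho,u}) = D + ({\rm u}^\e-u)$ and $\int f^\e D\,d\xi = m_{f^\e}-{\rm u}^\e\rho_{f^\e}$. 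Differentiating $\tfrac1\e\mathscr{P}[\rho_{f^\e}|\rho]$ with the continuity equations $\pa_t\rho_{f^\e}+\nabla\cdot m_{f^\e}=0$ and $\pa_t\rho=-\nabla\cdot(\rho u)$, and using ${\rm u}^\e-u = -\nabla(-\Delta)^{-\alpha}(\rho_{f^\e}-\rho)$, produces a further cross term of the same form, the good-sign contribution $-\tfrac1\e\int\rho_{f^\e}|{\rm u}^\e-u|^2$, and the commutator term $\tfrac1\e\int (\rho_{f^\e}-\rho)\,u\cdot\nabla(-\Delta)^{-\alpha}(\rho_{f^\e}-\rho)$. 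A Young estimate based on $|m_{f^\e}-{\rm u}^\e\rho_{f^\e}|^2 \le \rho_{f^\e}\int f^\e|D|^2\,d\xi$ shows that the two cross terms are absorbed by $-\tfrac1\e\iint f^\e|D|^2$ and $-\tfrac1\e\int\rho_{f^\e}|{\rm u}^\e-u|^2$, so all genuinely $\tfrac1\e$-weighted contributions combine to a nonpositive quantity (keeping a fraction of the dissipation in reserve for the weak part).

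\textbf{Modulated interaction estimate (main obstacle) and closure.} What remains is the commutator term, and here the decisive ingredient is the modulated-interaction estimate for Riesz kernels in the spirit of \cite{Ser20, CJe21-2, CJ24}:
\[
\Bigl|\int_{\R^d} (\rho_{f^\e}-\rho)\,u\cdot\nabla(-\Delta)^{-\alpha}(\rho_{f^\e}-\rho)\,dx\Bigr| \le C\,\|\nabla u\|_{L^\infty}\,\mathscr{P}[\rho_{f^\e}|\rho],
\]
for which the Caffarelli--Silvestre extension is needed to handle the singular range $\alpha\in(0,1)$. This bounds the commutator term by $C\|\nabla u\|_{L^\infty}\tfrac1\e\mathscr{P}[\rho_{f^\e}|\rho]\le C\mathscr{E}_\e$. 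The remaining transport terms and the time derivatives of $\log\rho$ and $u$ — controlled through the hypotheses $\nabla(-\Delta)^{-\alpha}\rho\in L^\infty_t W^{1,\infty}_x\cap W^{1,\infty}_t L^\infty_x$ and $\nabla\log\rho\in L^\infty$ — recombine, via $\pa_t\rho=-\nabla\cdot(\rho u)$, into contributions bounded by $C\mathscr{E}_\e + C\e$, the $O(\e)$ remainder reflecting the mismatch between the kinetic flux $m_{f^\e}$ and its hydrodynamic closure $\rho_{f^\e}{\rm u}^\e$. Collecting everything gives $\tfrac{d}{dt}\mathscr{E}_\e \le C\mathscr{E}_\e + C\e$, and Grönwall closes the well-prepared case. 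I expect this modulated-interaction estimate, together with the precise bookkeeping of the $\e$-powers needed to land on the remainders $\e$ and $\e^2$, to be the hardest part.

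\textbf{Mildly well-prepared regime.} Here $\mathscr{H}[f^\e|M_{\rho,u}](0)$ may diverge (the admissible growth being up to order $\e^{-1}$), so $\mathscr{E}_\e$ cannot be propagated; instead I would keep the energy identity in integrated form, where the right-hand side is governed by the smaller combination $\e\mathscr{H}(0)+\mathscr{P}(0)\to0$. The retained $\tfrac1\e$-dissipation, fed into Lemma \ref{lem_gd2}, is converted into time-integrated bounded-Lipschitz control of $f^\e$ against the local Maxwellians, yielding the bound on $\int_0^T {\rm d}_{\rm BL}^2(f^\e,M_{\rho,u})\,dt$ and, after integrating in $\xi$, on $\int_0^T {\rm d}_{\rm BL}^2(\rho_{f^\e}u_{f^\e},-\rho\nabla(-\Delta)^{-\alpha}\rho)\,dt$. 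For the macroscopic BL bound I would propagate ${\rm d}_{\rm BL}(\rho_{f^\e},\rho)$ directly through a Grönwall estimate in the weak metric: testing the two continuity equations against a common Lipschitz function and controlling the force difference by the stability of $\nabla(-\Delta)^{-\alpha}\rho$ and the already-established flux bound gives $\sup_{[0,T]}{\rm d}_{\rm BL}^2(\rho_{f^\e},\rho) \le C{\rm d}_{\rm BL}^2(\rho_{f^\e_0},\rho_0) + C(\e\mathscr{H}(0)+\mathscr{P}(0)+\e^2)$, started from the prepared datum ${\rm d}_{\rm BL}(\rho_{f^\e_0},\rho_0)\to0$. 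The weak-$*$ convergences are immediate consequences of these quantitative estimates.
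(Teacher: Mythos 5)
Your overall strategy coincides with the paper's: the same $\e$-weighted modulated energy $\mathscr{H}[f^\e|M_{\rho,u}]+\tfrac1\e\mathscr{P}[\rho_{f^\e}|\rho]$, the same modulated Riesz-interaction estimate to handle the commutator (Proposition \ref{prop_mod}), Gr\"onwall for the well-prepared case, and the BL lemmas (Lemmas \ref{lem_gd}, \ref{lem_gd2}) fed by the reserved dissipation for the mildly well-prepared case. However, two steps fail as written, and both trace to the same choice: centering the Fisher information and the momentum decomposition at the force field ${\rm u}^\e=-\nabla(-\Delta)^{-\alpha}\rho_{f^\e}$ rather than at the mean velocity $u_{f^\e}$.

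First, the absorption of the cross terms. With your bookkeeping the two cross terms add up to $-\tfrac2\e\int({\rm u}^\e-u)\cdot(m_{f^\e}-{\rm u}^\e\rho_{f^\e})\,dx$, while the available coercivity is exactly $-\tfrac1\e\iint f^\e|D|^2-\tfrac1\e\int\rho_{f^\e}|{\rm u}^\e-u|^2$. Since $|2\int a\cdot b|\le \theta\int|a|^2+\theta^{-1}\int|b|^2$ cannot have both weights strictly below $1$, no Young inequality based on $|m_{f^\e}-{\rm u}^\e\rho_{f^\e}|^2\le\rho_{f^\e}\int f^\e|D|^2d\xi$ leaves ``a fraction of the dissipation in reserve'': it saturates exactly. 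What rescues the argument is the exact quadratic structure: writing $D=D'+(u_{f^\e}-{\rm u}^\e)$ with $D'=\nabla_\xi\log(f^\e/M_{\rho_{f^\e},u_{f^\e}})$ and completing the square, the three quadratic terms recombine into $-\tfrac1\e\iint f^\e|D'|^2-\tfrac1\e\int\rho_{f^\e}|u_{f^\e}-u|^2$, which are precisely the coercive terms produced by Proposition \ref{prop_key}. This recentering is not cosmetic: the mildly well-prepared case needs these surviving $\tfrac1\e$-weighted quantities to feed Lemma \ref{lem_gd2}, so without the exact cancellation that part of your argument does not close.

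Second, the momentum bound. In your decomposition the middle piece $(\,{\rm u}^\e-u)\rho_{f^\e}=-\rho_{f^\e}\nabla(-\Delta)^{-\alpha}(\rho_{f^\e}-\rho)$ is \emph{not} controlled by $\mathscr{E}_\e$: the operator $\nabla(-\Delta)^{-\alpha}$ has order $1-2\alpha>-\alpha$, so this term would require $\dot H^{1-2\alpha}$-control of $\rho_{f^\e}-\rho$ while only the $\dot H^{-\alpha}$-norm is available through $\mathscr{P}[\rho_{f^\e}|\rho]$. Moreover the first piece $m_{f^\e}-{\rm u}^\e\rho_{f^\e}$ is bounded by $(\iint f^\e|D|^2)^{1/2}$, and the dissipation is controlled only after integration in time, so at best you get an $L^2(0,T;L^1)$ bound, not the claimed $L^\infty(0,T;L^1)$ one. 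The paper's decomposition $\rho_{f^\e}u_{f^\e}-\rho u=\rho_{f^\e}(u_{f^\e}-u)+u(\rho_{f^\e}-\rho)$ avoids both problems, since $\int\rho_{f^\e}|u_{f^\e}-u|^2\,dx\le 2\,\mathscr{H}[f^\e|M_{\rho,u}]$ by Lemma \ref{lem:rel_decom} and $\|\rho_{f^\e}-\rho\|_{L^1}^2\le 2\,\mathscr{H}[f^\e|M_{\rho,u}]$ by Csisz\'ar--Kullback--Pinsker, both uniformly in time. The rest of your outline (energy inequality, Riesz commutator estimate, Gr\"onwall, BL propagation of the density) matches the paper's proof.
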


The notion of mildly well-prepared initial data can be interpreted as an intermediate regime between the classical well-prepared and the fully ill-prepared settings. In contrast to the standard well-prepared case, we do not require the relative entropy $\mathscr{H}[f^\e | M_{\rho, -\nabla (-\Delta)^{-\alpha} \rho}](0)$ to vanish as $\e \to 0$. Our framework permits substantially larger deviations of the kinetic initial state from equilibrium, and in fact accommodates situations where this quantity diverges, with growth up to order
\bq\label{grow_hf}
\mathscr{H}[f^\e | M_{\rho, -\nabla (-\Delta)^{-\alpha} \rho}](0) \sim \frac1{\e^{1-}} \quad \mbox{as } \e \to 0.
\eq
To illustrate this, consider
\[
f^\e_0(x,\xi) = \frac{\rho_0(x)}{(2\pi)^\frac d2}\exp\lt( - \frac{|u_\e(x) - \xi|^2}{2} \rt), \quad u_\e(x) = \frac{v(x)}{\e^{\frac{1-\delta}2}}, 
\]
where $v \in L^2(\rho_0\,dx)$ and $\delta \in (0,1)$.  Moreover, assume that  
\[
\rho_0 \in \calP \cap \dot H^{-\alpha}(\R^d) \quad \mbox{and} \quad \nabla (-\Delta)^{-\alpha} \rho_0 \in L^2(\rho_0\,dx).
\]
Then, we find
\[
 \e \mathscr{K}[f^\e_0] + \mathscr{P}[\rho_{f^\e_0}]  =    \mathscr{P}[\rho_0]  + \frac{\e^\delta}2\intr \rho_0 |v|^2\,dx   +\frac d2 \intr \rho_0\,dx  <\infty
\]
uniformly in $\e \in (0,1)$.  

Let $v_0(x) := -\nabla (-\Delta)^{-\alpha} \rho_0$. Using the identity for Maxwellians, we have
\[
\mathscr{H}[f^\e | M_{\rho, -\nabla (-\Delta)^{-\alpha} \rho}](0) = \mathscr{H}[M_{\rho_0, u_\e} | M_{\rho_0, v_0}] = \frac12 \intr \rho_0 | u_\e - v_0|^2\,dx.
\]
Since $|u_\e|^2=\e^{-(1-\delta)}|v|^2$ and $v_0 \in L^2(\rho_0\,dx)$, it follows that
\[
 \intr \rho_0 | u_\e - v_0|^2\,dx \sim \frac1{\e^{1-\delta}}
\]
for sufficiently small $\e \in (0,1)$ and any $\delta \in (0,1)$, which yields \eqref{grow_hf}. This shows that our convergence result remains valid well beyond the classical well-prepared regime, and
encompasses a range of ill-prepared configurations with large initial modulated energies. 

Nevertheless, the terminology ``mildly well-prepared'' is justified by the additional requirement
\[
{\rm d}_{\rm BL}(\rho_{f^\e_0}, \rho_0) \to 0,
\]
which ensures that the macroscopic density is correctly aligned with the limiting initial data. This condition is essential for quantitative convergence estimates: without such consistency at the level of densities, one cannot close the stability inequalities. If this condition is removed, only qualitative convergence can be expected, typically via compactness arguments (see \cite{NPS01} for example). 

This viewpoint is also consistent with the formal asymptotic expansion presented in Appendix \ref{app_optimal_high}. In the high-field regime, the expansion suggests that both the kinetic solution and the macroscopic density converge at first order, namely
\[
f^\e - M_{\rho,-\nabla(-\Delta)^{-\alpha}\rho} = O(\e), \quad \rho_{f^\e} - \rho = O(\e),
\]
with no additional cancellation  in the density variable.  In this sense, the BL-based stability estimate in Theorem \ref{thm_kin2} is consistent with the formally optimal scaling in a weak topology.   More precisely, whenever the right-hand side of the BL-based stability estimate is of order $O(\e^2)$, the estimate yields the natural first-order convergence rate at the level of weak distances.

\begin{remark}In the absence of the diffusion term in \eqref{eq_high_sca}, the kinetic equation simplifies to
\[
\e \pa_t f^\e + \e \xi\cdot \nabla f^\e  = \nabla_\xi \cdot \lt( (\xi + \nabla (-\Delta)^{-\alpha} \rho_{f^\e})   f^\e\rt).
\]
At the formal level, as $\e \to 0$, this structure suggests a concentration of $f^\e$ in velocity space, corresponding to the monokinetic ansatz:
\[
f^\e(x,\xi) \sim \rho_{f^\e}(x) \otimes \delta_{{\rm u}^\e}(\xi), \quad {\rm u}^\e = - \nabla (-\Delta)^{-\alpha} \rho_{f^\e}.
\]
Carrying out the same moment computations as above yields the aggregation equation \eqref{eq_ag} as the limiting macroscopic dynamics.

It is worth emphasizing that within the high-field scaling, the limiting macroscopic equation \eqref{eq_ag} remains the same whether or not diffusion is present. 
What differs is the kinetic relaxation mechanism: with diffusion we obtain convergence towards the local Maxwellian
\[
f^\e \to M_{\rho, -\nabla (-\Delta)^{-\alpha}\rho},
\] 
whereas without diffusion, the solution converges instead to the monokinetic ansatz
\[
f^\e \to \rho(x)\otimes \delta_{-\nabla(-\Delta)^{-\alpha}\rho}(\xi).
\] 
Thus, although the macroscopic dynamics are identical, the microscopic pathway to the limit is fundamentally different. Rigorous justifications of this limiting behavior can be found in \cite{FS15, Jab00} via compactness arguments, and in \cite{CC20, CFI25, CJ24} for quantitative stability estimates.
\end{remark}
 
%
%
%
%
%
\subsubsection{gSQG limit}

We next study the gSQG limit. Specifically, by setting ${\rm A} = \e$, ${\rm B} = \tau =1$, and assuming a strong, constant magnetic field, the kinetic equation reduces to
\bq\label{kin_sqg_sca}
\e \pa_t f^\e + \xi\cdot \nabla f^\e  - \nabla (-\Delta)^{-\alpha} \rho_{f^\e}  \cdot \nabla_\xi  f^\e + \frac1\e \xi^\perp \cdot \nabla_\xi f^\e = \nabla_\xi \cdot (\nabla_\xi f^\e + \xi f^\e).
\eq
Here the additional term $\frac1\e \xi^\perp \cdot \nabla_\xi f^\e$ represents the rapid rotation induced by the magnetic field, enforcing a gyroscopic constraint that suppresses the longitudinal velocity component and generates an effective drift in the perpendicular direction. In the three-dimensional setting with a magnetic field aligned along $e_3$, this operator corresponds to $\xi^\perp = (-\xi_2,\xi_1,0)$. Since $\nabla^\perp$ acts only on the plane orthogonal to the magnetic field, our analysis naturally reduces to the two-dimensional case. Such a scaling is in line with dimensionless analyses of kinetic equations under strong magnetic fields (see, e.g., \cite{Bos07,BV25,GSR99}).

Unlike the diffusive and high-field regimes, here transport and forcing terms act at the same order as the linear Fokker--Planck operator, thus the limit cannot be obtained directly from the moment equations. The key mechanism is the strong perpendicular rotation operator $\xi^\perp \cdot \nabla_\xi$, which enforces rapid angular averaging in velocity space. At leading order, this yields $f^\e \sim M_{\rho_{f^\e},0}$, while the first-order correction produces a transverse flux. In this process, a term involving $\nabla^\perp \rho$ does appear in the {\it kinetic corrector}, but it vanishes upon taking the divergence in the continuity equation due to the identity $\nabla \cdot \nabla^\perp \equiv 0$ in two dimensions. This cancellation mechanism mirrors what occurs in the rigorous relative entropy analysis, where cross terms involving $\nabla^\perp \rho$ disappear after integration by parts.

As a consequence, passing to the limit $\e \to 0$, we arrive at the generalized SQG equation
\bq\label{eq_sqg}
\partial_t \rho -  \nabla \cdot (\rho \nabla^\perp (-\Delta)^{-\alpha} \rho) = 0.
\eq
A detailed operator-based derivation, including the construction of the corrector, is given in Appendix \ref{app:gsqg}.

The gSQG equation \eqref{eq_sqg} can be interpreted as a conservative transport model in which the density is advected by the incompressible velocity field $\nabla^\perp (-\Delta)^{-\alpha}\rho$. 
It preserves mass and inherits a Hamiltonian structure with respect to the interaction energy $\mathscr{P}[\rho]$, in contrast with the dissipative gradient-flow character of the diffusive and high-field limits. 
In particular, when $\alpha=1$, equation \eqref{eq_sqg} coincides with the two-dimensional incompressible Euler equation written in vorticity form, thereby situating the gSQG model within the broader family of active scalar dynamics.

We now turn to the rigorous justification of this limit. The following theorem establishes the convergence of weak entropy solutions of the kinetic equation \eqref{kin_sqg_sca} towards solutions of the gSQG equation \eqref{eq_sqg}, under suitable assumptions on the initial data.
 
\begin{theorem}[gSQG limit]\label{thm_kin3}
Let $d=2$, $T>0$, and let $\rho \in L^\infty([0,T];\calP(\R^d))$ be the unique classical solution to \eqref{eq_sqg} satisfying
\[
 \nabla^\perp (-\Delta)^{-\alpha} \rho \in L^\infty(0,T; W^{1,\infty}(\R^2)) \cap W^{1,\infty}(0,T; L^\infty(\R^2)),
\]
\[
\rho \in L^\infty(0,T; H^{1-\alpha}(\R^2)), \quad \mbox{and} \quad \nabla \log \rho \in W^{1,\infty}(0,T; L^\infty(\R^2)).
\]
Let $\{f^\e\}_{\e > 0}$ be a family of weak entropy solutions to the equation \eqref{kin_sqg_sca} on the time interval $[0,T]$, with initial data   $\{f^\e_0\}_{\e > 0}$ satisfying
\[
  \sup_{0 < \e < 1}\e\lt(\mathscr{K}[f^\e_0] + \mathscr{P}[\rho_{f^\e_0}]\rt)< \infty.
\]

If the initial data is well-prepared  in the sense that
\[
  \mathscr{H}[f^\e | M_{\rho, 0}](0) + \mathscr{P}[\rho_{f^\e} | \rho](0) \to 0 \quad \mbox{as } \e \to 0,
  \]
then we have
\begin{align*}
f^\e &\to M_{\rho, 0} \quad \mbox{in } L^\infty(0,T; L^1(\R^2 \times \R^2)), \cr
\int_{\R^2} f^\e\,d\xi &\to \rho \quad \mbox{in } L^\infty(0,T; L^1 \cap \dot{H}^{-\alpha}(\R^2)),\cr
\frac1\e \int_{\R^2} \xi f^\e\,d\xi &\overset{*}{\rightharpoonup} - \rho \nabla^\perp (-\Delta)^{-\alpha} \rho - \rho \nabla^\perp \log\rho  \quad \mbox{in } \calM((0,T)\times \R^2).
\end{align*}
In addition, the following stability estimate holds:
 \[
 \|f^\e - M_{\rho, 0}\|_{L^\infty(0,T;L^1)}^2 + \|\rho_{f^\e} - \rho\|_{L^\infty(0,T;L^1\cap \dot{H}^{-\alpha})}^2 \leq C\lt(\mathscr{H}[f^\e | M_{\rho, 0}](0) +   \mathscr{P}[\rho_{f^\e} | \rho](0) + \e\rt)
 \]
 and
\begin{align}\label{gSQG_mom_q}
\begin{aligned}
&\frac1\e {\rm d}_{\rm BL}^T(\rho_{f^\e} u_{f^\e}, - \rho \nabla^\perp (-\Delta)^{-\alpha} \rho -  \rho \nabla^\perp \log\rho ) \cr
&\quad \leq C \lt( \sqrt{\mathscr{H}[f^\e | M_{\rho, 0}](0)} + \sqrt{\mathscr{P}[\rho_{f^\e} | \rho](0)}  + \sqrt \e\rt) + C \lt(\mathscr{H}[f^\e | M_{\rho, 0}](0) +  \mathscr{P}[\rho_{f^\e} | \rho](0)  +  \e\rt).
\end{aligned}
\end{align}

If we further assume that $\alpha \in [\frac12, 1)$ and $\rho \in L^\infty(0,T;H^{\gamma+1}(\R^2))$ for some $\gamma \ge 4$, then the density error also satisfies the following quantitative estimate in the negative Sobolev space $H^{-\gamma}(\R^2)$:
\bq\label{gSQG_rho}
\|(\rho_{f^\e} -\rho)(t)\|_{H^{-\gamma}} \le C\lt(\|\rho_{f_0^\e} - \rho_0\|_{H^{-\gamma}} + \mathscr{H}[f^\e | M_{\rho, 0}](0) + \mathscr{P}[\rho_{f^\e} | \rho](0)+ \e\rt).
\eq

 Here, the constant $C>0$ is independent of $\e>0$, depending only on the initial data, $T$, and the limiting solution $\rho$.
\end{theorem}

\begin{remark}
Formal computations in Appendices \ref{app:gsqg} and \ref{app_optimal} show that
\[
f^\e = M_{\rho,0} + \e f_1 + O(\e^2),
\]
with
\[
f_1= - \lt((\nabla \rho + \rho \nabla (-\Delta)^{-\alpha}\rho)^\perp \cdot \xi\rt)M_{1,0}.
\]

Since $f_1$ is odd in $\xi$, we have
\[
\int_{\R^2} f_1\,d\xi=0.
\]
Thus, at the formal level, this suggests
\[
f^\e-M_{\rho,0}=O(\e), \quad \rho_{f^\e}-\rho=O(\e^2).
\]
In particular, as in the diffusive regime, the density variable gains one additional order due to the oddness of the first-order corrector in the velocity variable. We emphasize, however, that Theorem \ref{thm_kin3} only yields an $O(\e)$-type quantitative stability estimate, and does not address this formally sharper $O(\e^2)$ density rate.

On the other hand, the first-order correction can still be absorbed into a shifted Maxwellian. Indeed, introducing the effective velocity
\[
{\rm u}_\e:= - \e\lt(\nabla^\perp\log\rho + \nabla^\perp(-\Delta)^{-\alpha}\rho\rt),
\]
one may rewrite the first-order term as
\[
\e f_1=\xi\cdot {\rm u}_\e\,M_{\rho,0}.
\]
Hence, using the same first-order Taylor expansion of $M_{\rho,{\rm u}_\e}$ around ${\rm u}=0$ as in the diffusive case, one formally obtains the improved approximation
\[
f^\e-M_{\rho,{\rm u}_\e} =O\bigl(\e^2(1+|\xi|^2)M_{\rho,0}\bigr).
\]
Thus, while the reference Maxwellian $M_{\rho,0}$ approximates the kinetic distribution only at order $O(\e)$, the shifted Maxwellian $M_{\rho,{\rm u}_\e}$ captures the leading drift correction and yields a formally sharper second-order approximation at the kinetic level. A corresponding rigorous estimate for $f^\e-M_{\rho,{\rm u}_\e}$ in a weaker negative Sobolev topology is given in Remark \ref{rmk_opt_gsqg}.
\end{remark}

A key structural feature of the gSQG scaling is that the first-order momentum flux contains the divergence-free component $\nabla^\perp \rho$. As a consequence, when the momentum balance is paired with gradients of test functions, this part disappears thanks to the identity $\nabla\cdot\nabla^\perp \equiv 0$. This cancellation underlies the weak estimate \eqref{gSQG_mom_q} for the rescaled momentum.

This mechanism should be contrasted with the other two regimes. In the diffusive limit, the parabolic structure of the limiting equation makes it possible to obtain sharper quantitative estimates for the density in weaker topologies, in a way consistent with the formally optimal $O(\e^2)$ scaling. In the high-field regime, the BL framework is naturally adapted to the transport structure of the limit equation and leads to first-order weak convergence estimates consistent with the formal asymptotics.

By contrast, although the formal expansion in the gSQG regime also suggests the improved density scaling $\rho_{f^\e}-\rho=O(\e^2)$ due to the oddness of the first-order corrector, this sharper rate is not directly captured by our rigorous argument. The reason is that the limiting equation is now a conservative active scalar equation with Hamiltonian structure, rather than a parabolic equation with regularizing effects. Accordingly, the natural quantitative object in this regime is different from those in the diffusive and high-field cases: rather than a refined density estimate of the same type, the relevant object is the rescaled momentum, for which the rotational cancellation encoded in $\nabla^\perp \rho$ yields the appropriate weak stability structure. In this sense, estimate \eqref{gSQG_rho} reflects the weak quantitative structure intrinsic to the gSQG scaling, although the formally suggested $O(\e^2)$ density improvement remains beyond the scope of the present argument.

%
%
%
%
%

 \subsection{Novelty and contributions}

The present work provides the first unified analysis of three representative asymptotic regimes for the Vlasov--Fokker--Planck equation with Riesz interactions: the diffusive limit, the high-field limit, and the gSQG limit. Our approach is based on a common relative entropy framework, supplemented with modulated interaction energies and bounded Lipschitz stability estimates. This combination yields not only qualitative convergence but also explicit quantitative error bounds. It also allows us to treat well-prepared initial data in all three regimes and, in the high-field regime, a class of mildly well-prepared initial data even without a uniform bound on the total energy.

For clarity, we discuss the novelty and contributions of each regime separately.

%
%
%
%
%
\subsubsection{Diffusive limit}

The study of the diffusive (or low-field) limit of kinetic Fokker--Planck type equations has a long history, beginning with Kramers' formal analysis of the Smoluchowski--Kramers approximation \cite{Kra40} and Nelson's rigorous stochastic interpretation \cite{Nel67}. Subsequent works employed probabilistic and asymptotic techniques, as well as variational methods, to justify the limit in the absence of interaction forces \cite{Fre04}. In particular, the variational approach in \cite{DLPSS18} provided quantitative estimates in terms of entropy and Wasserstein distances by exploiting functional inequalities. When nonlocal interactions are present, rigorous derivations were obtained under smoothness and integrability assumptions on the potential, often relying on duality arguments and compactness methods \cite{DLPS17}. For the Vlasov--Poisson--Fokker--Planck (VPFP) system with Coulomb interactions, the low-field limit toward drift-diffusion equations has been established through entropy bounds combined with compactness arguments \cite{GM10, Gou05, PS00,  WLL15}, though without quantitative error control. More recently, quantified diffusive limits have been developed in \cite{CT22} for equations with smooth interaction forces, and further advances have extended these techniques to include singular interaction kernels by combining coarse-graining, Wasserstein stability estimates, and uniform weighted Sobolev bounds. In addition, optimal convergence rates for the VPFP system have been established in \cite{Zho22} under perturbation framework, while asymptotic-preserving schemes capable of capturing the diffusion limit have been proposed in \cite{LJH21}.

In contrast to the above approaches, our work provides what appears to be the \emph{first quantitative} analysis of the diffusive limit based directly on the relative entropy method. Previous results on quantitative convergence in weak topologies typically relied on coarse-graining the kinetic dynamics into a drift-diffusion type structure and employing Wasserstein distance estimates \cite{CT22}. 

Our strategy departs from this paradigm: we reformulate the limiting aggregation-diffusion equation in conservative form and apply the relative entropy framework directly, thereby exploiting the full dissipative structure of the kinetic equation. This allows us to strengthen convergence estimates in weak topologies under well-prepared initial data. For instance, while \cite{CT22} established
\[
\sup_{0 \leq t \leq T}  {\rm d}_2(\rho_{f^\e}(t), \rho(t)) \leq C{\rm d}_2(\rho_{f^\e_0}, \rho_0) + C \e,
\]
where ${\rm d}_2$ denotes the second-order Wasserstein distance, so that the convergence rate is saturated by the $\e$-term regardless of the accuracy of the initial data, our analysis shows that, for suitably prepared initial data, the error can be reduced to $O(\e^2)$ in low-regularity topologies. We also note that the perturbative analysis in \cite{Zho22} yields an $O(\e)$ convergence rate for the kinetic distribution $f^\e$, but does not address the improved convergence of the macroscopic density $\rho^\e$. In this sense, the $O(\e^2)$ density estimate obtained here is consistent with the formally optimal diffusive scaling.

%
%
%
%
%

\subsubsection{High-field limit}

The high-field scaling was first introduced in semiconductor kinetic theory to describe charge transport under strong external fields \cite{Pou92}. A rigorous analysis for the Vlasov--Poisson--Fokker--Planck (VPFP) system in this regime was developed in \cite{NPS01}, where convergence toward drift-type aggregation equations was established.

The high-field regime has been extensively studied for VPFP systems \cite{ACGS01,CCP22,GNPS05,NPS01}. Classical works such as \cite{GNPS05,NPS01} established convergence toward drift-type aggregation equations, typically relying on compactness arguments, entropy solution frameworks, or modulated energy methods. These analyses provided a rigorous qualitative justification of the limiting dynamics, but remained essentially non-quantitative, yielding at best weak convergence of measures or stability in entropy-type topologies. In particular, quantitative error estimates in the presence of diffusion have not been available. In parallel, in the absence of diffusion, the high-field limit for the Vlasov equation with nonlocal interactions has been analyzed in \cite{CFI25,CJ24,FS15}, further highlighting the interplay between interaction and relaxation effects.

Our contribution is to introduce a genuinely quantitative framework for the high-field limit with diffusion. By employing relative entropy estimates directly at the kinetic level, we exploit the dissipative structure of the Fokker--Planck operator and derive explicit convergence rates under well-prepared initial data. This represents, to our knowledge, \emph{the first quantitative} justification of the high-field regime for Vlasov--Fokker--Planck-type models. Moreover, in the weak-topology setting, our BL-based estimates are consistent with the first-order scaling predicted by the formal asymptotic expansion.

%
%
%
%
%

\subsubsection{gSQG limit}

In the periodic setting without a point charge, convergence to the incompressible Euler equation has been established under suitable energy and $L^\infty$ bounds \cite{GSR99,SR00}, while related extensions in the presence of a point charge were obtained in \cite{Mio19}. An alternative approach was proposed in \cite{Bre00}, which introduced a modulated energy framework to prove the convergence of the flux (current) toward dissipative solutions of the incompressible Euler equations. More recently, \cite{BV25} incorporated Fokker--Planck collisions and applied the relative entropy method to derive quantitative convergence rates for the density, but without addressing momentum variables.

Our analysis uncovers a different perspective: under the strong magnetic field scaling, the Vlasov--Fokker--Planck model with Riesz interaction potentials gives rise not to the incompressible Euler equation, but to a generalized surface quasi-geostrophic (gSQG) equation. To the best of our knowledge, this connection between fast rotation and gSQG-type dynamics in the presence of Riesz interactions has not been previously established. It highlights how dimension reduction and nonlocal forces can interact in unexpected ways. By combining entropy dissipation with modulated interaction energy estimates, we are able to rigorously capture this singular limit and, crucially, provide explicit convergence rates not only for the density but also for the flux variables. In this way, our analysis complements \cite{BV25} by extending quantitative control beyond the density to momentum observables in a setting where nonlocal Riesz interactions drive a gSQG-type macroscopic limit.

%
%
%
%
%
 
 \subsection{Organization of paper} The rest of this paper is organized as follows. 
In Section \ref{sec_pre}, we develop the analytic framework underlying our asymptotic analysis. This includes structural decompositions of the relative entropy and uniform energy-dissipation identities.
 Section \ref{sec_qs} establishes quantitative stability tools, combining relative entropy, weak-topology estimates, and modulated interaction energies. In Section \ref{sec_vfp}, these methods are applied to derive the hydrodynamic limits in the diffusive, high-field, and gSQG regimes.  Finally, the appendices provide the formal derivation of the gSQG limit, a discussion of the formal optimal convergence rates in the three singular regimes, and the verification of the regularity assumptions imposed on the limiting equations.

%
%
%
%

\section{Preliminaries}\label{sec_pre}

In this section, we present the analytic framework that underpins our asymptotic analysis. The discussion is organized around two complementary ingredients. We first recall the basic structural properties of relative entropy, including its variational characterization, decomposition formulas, and functional inequalities, which allow entropy methods to connect microscopic information with macroscopic quantities. We then establish energy-dissipation balances that provide uniform control of kinetic and interaction energies throughout the evolution. Taken together, these tools form the basis for the quantitative convergence estimates developed in the next section.
%
%
%
%
%
\subsection{Relative entropy and fundamental properties}
We begin with the concept of relative entropy with respect to local Maxwellians. This quantity plays a central role in our analysis: on the one hand, it admits a natural decomposition into microscopic deviations and macroscopic mismatches; on the other hand, it enjoys several useful variational and functional-analytic properties. In particular, classical inequalities relate relative entropy to Fisher information, $L^1$-type distances, and coercivity estimates.

A first important aspect is its variational interpretation. For prescribed $(\rho,u)$, the local Maxwellian $M_{\rho,u}$ is the natural equilibrium state associated with the mass and momentum constraints, and in fact arises as the unique minimizer of a constrained free energy functional. We recall this characterization here, since it clarifies the distinguished role of Maxwellians in the relative entropy method.

More specifically, $M_{\rho,u}$ is the unique minimizer of the variational problem:
\bq\label{eq:var-prob}
\min \lt\{ \mathcal{F}[g] := \intr  \lt(g(\xi)\log g(\xi) + \frac12 |\xi|^2 g(\xi)\rt)d\xi : \intr g\,d\xi = \rho, \ \intr \xi g\,d\xi = \rho u \rt\}.
\eq
This characterization can be justified in several ways:

\medskip
\noindent {\bf Via convexity:} The map $z\mapsto z\log z$ is strictly convex on $[0,\infty)$; hence $\mathcal{F}$ in \eqref{eq:var-prob} is strictly convex. For any $g\ge0$ with finite first/second moments, a direct calculation using 
\[
\log M_{\rho,u}(\xi)=\log\rho-\frac d2\log(2\pi)-\frac12|\xi-u|^2
\]
gives the identity
\[
\mathcal{F}[g] = \mathcal{F}[M_{\rho,u}] +  \intr g\log\frac{g}{M_{\rho,u}}\,d\xi
 + \lt(\log\rho-\frac d2\log(2\pi)\rt)\lt(\int g\,d\xi - \rho\rt)
 + u\cdot \lt(\int \xi g\,d\xi - \rho u\rt).
\]
Under the constraints in \eqref{eq:var-prob}, the last two bracketed terms vanish. Thus, we have
\[
\mathcal{F}[g]-\mathcal{F}[M_{\rho,u}] =  \intr g(\xi)\log\frac{g(\xi)}{M_{\rho,u}(\xi)}\,d\xi \ \ge 0.
\]
This decomposition (see also Lemma \ref{lem:rel_decom} below) shows explicitly that $\mathcal{F}[g]$ exceeds its minimal value $\mathcal{F}[M_{\rho,u}]$ by the relative entropy $\mathscr{H}[g|M_{\rho,u}]$. Since the latter is nonnegative and vanishes only when $g=M_{\rho,u}$ almost everywhere, the Maxwellian must be the unique minimizer. In particular, the strict convexity ensures that no other distribution with the same constraints can yield the same value. This argument emphasizes the canonical role of Maxwellians: they are singled out not only by physical heuristics of local equilibrium, but also by a purely convex-analytic property of the free energy functional. 

\medskip
\noindent {\bf Via Lagrange multipliers \cite{Bou93, CIP94}:} 
Alternatively, one may minimize the functional $\mathcal{F}$ in \eqref{eq:var-prob} directly by introducing multipliers $\lambda_0 \in \R$ and $\lambda \in \R^d$ for the mass and momentum constraints. We thus consider
\[
\mathcal{L}[g] := \mathcal{F}[g] 
+ \lambda_0\lt(\intr g\,d\xi - \rho\rt) 
+ \lambda\cdot\lt(\intr \xi g\,d\xi - \rho u\rt).
\]
The Euler--Lagrange condition $\frac{\delta\mathcal{L}}{\delta g}=0$ gives
\[
\log g(\xi) + 1 + \frac12|\xi|^2 + \lambda_0 + \lambda\cdot\xi = 0,
\]
which implies
\[
g(\xi) = C_0 e^{-\frac12|\xi|^2 - \lambda\cdot\xi}, \quad C_0 := e^{-1-\lambda_0}.
\]
Completing the square in the exponent,
\[
-\frac12|\xi|^2 - \lambda\cdot\xi 
= -\frac12|\xi+\lambda|^2 + \frac12|\lambda|^2,
\]
so that
\[
g(\xi) = C_0 e^{\frac12|\lambda|^2}e^{-\frac12|\xi+\lambda|^2}.
\]
The normalization constraint $\intr g\,d\xi = \rho$ determines
\[
C_0 e^{\frac12|\lambda|^2} =\frac{\rho}{(2\pi)^{\frac d2}},
\]
and the moment constraint $\intr \xi g\,d\xi = \rho u$ yields $-\lambda=u$. Hence, we have
\[
g(\xi) = M_{\rho,u}(\xi) ,
\]
which shows that the Maxwellian is the unique minimizer of \eqref{eq:var-prob}. 

This derivation highlights that the Gaussian structure of Maxwellians arises unavoidably from the Euler--Lagrange equations under the moment constraints. The parameters $(\rho,u)$ appear naturally through the Lagrange multipliers, confirming that no other distribution can achieve the minimum. Thus, Maxwellians are singled out as canonical equilibria not only by physical arguments but also by variational principles.
 
 \medskip
\noindent {\bf Via Donsker--Varadhan duality \cite{DZ98, DV75, DE97}:} A third and more abstract route to the characterization of Maxwellians makes use of a variational representation of the relative entropy, often referred to as the Donsker--Varadhan formula. 
It states that for any pair of probability measures $\mu$ and $\nu$ on $\R^d$,
\[
\intr \log \lt(\frac{d\mu}{d\nu}\rt) d\mu 
= \sup_{\varphi}\lt\{ \intr \varphi\,d\mu - \log \intr e^{\varphi}\,d\nu \rt\},
\]
where the supremum may be taken over all bounded continuous test functions $\varphi$, or more generally over measurable $\varphi$ for which $\intr e^{\varphi}\,d\nu<\infty$. 

Specializing this representation to $\mu=g(\xi)\,d\xi$ and $\nu=M_{\rho,u}(\xi)\,d\xi$ yields
\[
\intr g(\xi)\log\frac{g(\xi)}{M_{\rho,u}(\xi)}\,d\xi
= \sup_{\varphi}\lt\{ \intr \varphi(\xi)\, g(\xi)\,d\xi - \log \intr e^{\varphi(\xi)} M_{\rho,u}(\xi)\,d\xi \rt\}.
\]
The right-hand side is clearly nonnegative, since the choice $\varphi\equiv 0$ already yields $0$. 
Moreover, equality can hold only if the supremum is attained at $\varphi=\log(g/M_{\rho,u})$, which forces $g=M_{\rho,u}$ almost everywhere. 
Thus the Maxwellian is once again identified as the unique minimizer.

This dual formulation complements the previous arguments in two important ways. 
First, it highlights a direct link between entropy and exponential moments: the inequality compares a linear functional of $g$ against a logarithmic Laplace transform with respect to $M_{\rho,u}$. 
Second, it connects the variational characterization of Maxwellians to the framework of large deviations, where relative entropy plays the role of a rate functional. 
In this perspective, Maxwellians arise as canonical equilibrium measures not only because they minimize the free energy in a convex-analytic sense, or as stationary points of Euler--Lagrange equations, but also because they saturate the fundamental variational duality between entropy and exponential integrability. 
This observation further reinforces the fundamental role of Maxwellians as equilibrium states in kinetic theory.

Beyond this variational characterization, relative entropy also admits a useful structural decomposition into a microscopic part and a macroscopic part. This decomposition will be one of the main tools in our later stability estimates, since it allows us to separate the deviation from the local Maxwellian with the same moments as $f$ from the mismatch between those moments and a prescribed target state. We now make this structure precise.

\begin{lemma}\label{lem:rel_decom}
Let $f:\R^d\to[0,\infty)$ be a density with $f \in L\log L(\R^d) \cap L^1(\R^d, (1+|\xi|^2)\,d\xi)$. Define
\[
\rho_f := \intr f\,d\xi,\quad 
u_f := \frac1{\rho_f}\intr \xi f\,d\xi,
\]
with the convention $u_f=0$ if $\rho_f=0$. For $(\rho,u)\in\R_+\times\R^d$, let
\[
M_{\rho,u}(\xi) = \frac{\rho}{(2\pi)^{\frac d2}} \exp \Big(-\frac{|\xi-u|^2}{2}\Big).
\]
Then the following decomposition holds:
\begin{equation}\label{eq:rel_decom}
 \intr f \log \frac{f}{M_{\rho,u}}\,d\xi
=   \intr f \log \frac{f}{M_{\rho_f,u_f}}\,d\xi
+   \rho_f \log \frac{\rho_f}{\rho}
+ \frac{\rho_f}{2}|u_f - u|^2.
\end{equation}
\end{lemma}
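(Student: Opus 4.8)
The plan is to reduce the claimed three-term decomposition to a single elementary Gaussian computation by factoring the logarithm through the intermediate Maxwellian $M_{\rho_f,u_f}$. Concretely, I would write
\[
\log\frac{f}{M_{\rho,u}} = \log\frac{f}{M_{\rho_f,u_f}} + \log\frac{M_{\rho_f,u_f}}{M_{\rho,u}},
\]
and integrate against $f$. The first term on the right reproduces the leading expression in \eqref{eq:rel_decom}, so the entire content of the lemma lies in evaluating $\int_{\R^d} f \log(M_{\rho_f,u_f}/M_{\rho,u})\,d\xi$. The hypotheses $f\in L\log L(\R^d)$ and $f\in L^1(\R^d,(1+|\xi|^2)\,d\xi)$ guarantee that $\rho_f$, $\rho_f u_f$, $\int_{\R^d} f\log f\,d\xi$, and $\int_{\R^d} f|\xi|^2\,d\xi$ are all finite, so that every integral in the splitting is well-defined and the additive manipulation of logarithms is legitimate.

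For the key integral, I would use the explicit identity
\[
\log M_{\rho,u}(\xi) = \log\rho - \frac d2\log(2\pi) - \frac12|\xi-u|^2,
\]
already recorded in the excerpt, which gives
\[
\log\frac{M_{\rho_f,u_f}}{M_{\rho,u}} = \log\frac{\rho_f}{\rho} + \frac12|\xi-u|^2 - \frac12|\xi-u_f|^2 = \log\frac{\rho_f}{\rho} + \xi\cdot(u_f-u) + \frac12\bigl(|u|^2-|u_f|^2\bigr).
\]
Integrating against $f$ and using $\int_{\R^d} f\,d\xi = \rho_f$ together with $\int_{\R^d}\xi f\,d\xi = \rho_f u_f$ turns the right-hand side into $\rho_f\log(\rho_f/\rho) + \rho_f u_f\cdot(u_f-u) + \frac{\rho_f}{2}(|u|^2-|u_f|^2)$. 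A final completion of the square, namely $\rho_f u_f\cdot(u_f-u) + \frac{\rho_f}{2}(|u|^2-|u_f|^2) = \frac{\rho_f}{2}|u_f-u|^2$, yields exactly the two remaining terms of \eqref{eq:rel_decom}.

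Since the computation is entirely algebraic once integrability is secured, the only genuine subtlety is the degenerate case $\rho_f=0$. In that case $f=0$ almost everywhere, the convention $u_f=0$ applies, and both sides of \eqref{eq:rel_decom} vanish provided the indeterminate term $\rho_f\log(\rho_f/\rho)$ is read as its limiting value $0$; I would record this interpretation explicitly. Thus the main obstacle is not analytic depth but bookkeeping: ensuring that the moment and entropy integrals are finite, so that the logarithmic splitting never produces an $\infty-\infty$ ambiguity, and handling the vacuum case through the stated convention.
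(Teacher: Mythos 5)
Your proposal is correct and follows essentially the same route as the paper: both split the logarithm through the intermediate Maxwellian $M_{\rho_f,u_f}$ and evaluate the remaining Gaussian ratio explicitly using the zeroth and first moments of $f$ (the paper merely inserts one more intermediate Maxwellian $M_{\rho,u_f}$ to separate the density and velocity contributions, which your single completion-of-the-square computation handles in one step). Your added remarks on integrability and the vacuum case $\rho_f=0$ are sensible bookkeeping that the paper leaves implicit.
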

\begin{proof} 
We first split the logarithm using intermediate Maxwellians:
\[
\log \frac{f}{M_{\rho,u}} = \log \frac{f}{M_{\rho_f,u_f}} + \log \frac{M_{\rho_f,u_f}}{M_{\rho,u_f}} + \log \frac{M_{\rho,u_f}}{M_{\rho,u}}.
\]
Integrating against $f$ gives
\[
\intr f \log \frac{f}{M_{\rho,u}}\,d\xi
= \intr f \log \frac{f}{M_{\rho_f,u_f}}\,d\xi
+ \intr f \log \frac{M_{\rho_f,u_f}}{M_{\rho,u_f}}\,d\xi
+ \intr f \log \frac{M_{\rho,u_f}}{M_{\rho,u}}\,d\xi.
\]
For the second term, since $\log \frac{M_{\rho_f,u_f}}{M_{\rho,u_f}} = \log \frac{\rho_f}{\rho}$, we have 
\[
\intr f \log \frac{M_{\rho_f,u_f}}{M_{\rho,u_f}}\,d\xi = \rho_f \log \frac{\rho_f}{\rho}.
\]
For the third term, expanding
\[
\log \frac{M_{\rho,u_f}}{M_{\rho,u}}
= -\frac{|\xi - u_f|^2 - |\xi - u|^2}{2}
= -\frac{|u_f|^2 - |u|^2 - 2(u_f - u)\cdot\xi}{2}
\]
and integrating yields
\[
\intr f \log \frac{M_{\rho,u_f}}{M_{\rho,u}}\,d\xi
= \frac{\rho_f}{2}|u_f - u|^2.
\]
Combining the three identities gives \eqref{eq:rel_decom}.
\end{proof}

In addition to the decomposition above, several classical inequalities for the relative entropy will be frequently used in our analysis. 
They connect entropy with Fisher information, provide coercivity in the density variable, and relate entropy to $L^1$-type distances between distributions. 
For completeness, we record them here.

\begin{lemma} \label{lem:prop}
Under the same assumptions as in Lemma \ref{lem:rel_decom}, the following properties hold:
\begin{enumerate}[label=(\roman*)]
\item {\rm (Logarithmic Sobolev/Fisher information bound)} For any $u \in \R^d$,
\bq\label{log_sob}
 \intr f \log \frac{f}{M_{\rho_f, u}}\,d\xi 
\leq \frac12 \intr \frac1{f}|\nabla_\xi f + (\xi - u)f|^2\,d\xi.
\eq
\item {\rm (Coercivity: piecewise quadratic-linear)}  
For any $\rho>0$,  
\[
  \rho_f \log \frac{\rho_f}{\rho}
\ge \frac{1}{4\rho} |\rho_f - \rho|^2 \chi_{\{\rho_f \geq \rho\}} + \frac{1}{4} |\rho_f - \rho| \chi_{\{\rho_f < \rho\}}.
\]
\item {\rm (Csisz\'ar--Kullback--Pinsker inequality)}  
For any two probability densities $p,q$,
\[
\|p - q\|_{L^1}^2 \le 2 \int  p \log \frac{p}{q}\,dz.
\]
\end{enumerate}
\end{lemma}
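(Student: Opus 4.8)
These three estimates are essentially independent, and my plan is to reduce each to a standard Gaussian or scalar inequality.

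For \emph{(i)}, the plan is to identify the right-hand side as a Fisher information relative to the Gaussian $M_{\rho_f,u}$ and invoke Gross's logarithmic Sobolev inequality. After discarding the trivial case $\rho_f=0$, I would set $g:=f/M_{\rho_f,u}$ and let $d\mu:=\rho_f^{-1}M_{\rho_f,u}\,d\xi$ be the associated (mean-$u$, unit-covariance) Gaussian probability measure, so that $\int_{\R^d} g\,d\mu=1$. The computational heart is the identity $\nabla_\xi f+(\xi-u)f=M_{\rho_f,u}\,\nabla_\xi g$, which follows from $\nabla_\xi\log M_{\rho_f,u}=-(\xi-u)$; inserting it gives $\frac1f|\nabla_\xi f+(\xi-u)f|^2=\frac{M_{\rho_f,u}}{g}|\nabla_\xi g|^2$. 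Dividing the claimed bound by $\rho_f$ then reduces it to
\[
\int_{\R^d} g\log g\,d\mu\le\frac12\int_{\R^d}\frac{|\nabla_\xi g|^2}{g}\,d\mu,
\]
which is precisely Gross's inequality for $\mu$ (the log-Sobolev constant being translation invariant and normalized to $1$ since $\theta=1$); if the right-hand side is infinite the bound is vacuous, and otherwise a routine truncation justifies the computation. This is the one place where genuine analytic input enters.

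For \emph{(ii)}, the plan is a scaling reduction to a one-variable inequality. I would work with the scalar relative entropy $\Phi(\rho_f\,|\,\rho):=\rho_f\log\frac{\rho_f}{\rho}-\rho_f+\rho$, which agrees with $\rho_f\log\frac{\rho_f}{\rho}$ after integration in $x$ because $\rho_f$ and $\rho$ share the same mass, and use the homogeneity $\Phi(\rho_f\,|\,\rho)=\rho\,\phi(\rho_f/\rho)$ with $\phi(t):=t\log t-t+1$. Since $\phi(1)=\phi'(1)=0$ and $\phi''(t)=1/t$, a Taylor expansion with integral remainder gives $\phi(t)\ge\tfrac14(t-1)^2$ on the bounded near-diagonal range (where $\phi''$ is bounded below), producing the quadratic term $\tfrac1{4\rho}|\rho_f-\rho|^2$; on the far range the bound $\phi(t)\ge\tfrac14|t-1|$ follows from the monotonicity of $t\mapsto\phi(t)-\tfrac14(t-1)$ (its derivative is $\log t-\tfrac14$) together with the superlinear growth $\phi(t)\sim t\log t$, producing the linear term $\tfrac14|\rho_f-\rho|$. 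The delicate step here is choosing the exact threshold between the quadratic and linear regimes (equivalently, fixing the indicator sets, the natural cutoff being where $|\rho_f-\rho|\sim\rho$) so that both constants and the matching at the cutoff hold simultaneously.

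For \emph{(iii)}, this is the classical Csisz\'ar--Kullback--Pinsker inequality. Using equal masses, $\int p\log\frac pq=\int\Phi(p\,|\,q)$, and the plan is to combine the elementary pointwise bound $\phi(t)\ge\frac{3(t-1)^2}{2(t+2)}$, i.e. $\Phi(p\,|\,q)\ge\frac32\frac{(p-q)^2}{p+2q}$, with Cauchy--Schwarz,
\[
\Big(\int|p-q|\Big)^2\le\int\frac{(p-q)^2}{p+2q}\int(p+2q)=3\int\frac{(p-q)^2}{p+2q},
\]
to obtain $\|p-q\|_{L^1}^2\le 3\cdot\tfrac23\int\Phi(p\,|\,q)=2\int p\log\frac pq$; alternatively I would simply cite the standard references. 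Overall, the main obstacle is concentrated in \emph{(i)}, where everything rests on the sharp Gaussian log-Sobolev constant, while \emph{(ii)} and \emph{(iii)} are elementary once reduced to the scalar profile $\phi$, the only real care being the region bookkeeping in \emph{(ii)}.
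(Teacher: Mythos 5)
Parts (i) and (iii) are correct and essentially coincide with the paper's treatment. For (i) the paper likewise invokes the Gaussian logarithmic Sobolev inequality and then uses the identity $f\,|\nabla_\xi\log(f/M_{\rho_f,u})|^2=\frac1f|\nabla_\xi f+(\xi-u)f|^2$, which is exactly your computation $\nabla_\xi f+(\xi-u)f=M_{\rho_f,u}\nabla_\xi g$ after dividing out $\rho_f$. For (iii) the paper merely cites \cite{AMTU00}; your Cauchy--Schwarz argument with the pointwise bound $\phi(t)\ge\frac{3(t-1)^2}{2(t+2)}$, $\phi(t)=t\log t-t+1$, is the classical proof of that cited inequality and is valid.

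The genuine problem is in (ii): your plan proves the standard ``quadratic near the diagonal, linear far from it'' estimate, but it cannot deliver the inequality with the indicator sets as printed, and you leave exactly that bookkeeping unresolved. Setting $t=\rho_f/\rho$, the quadratic bound $\phi(t)\ge\tfrac14(t-1)^2$ fails on $\{\rho_f\ge\rho\}$ for large $t$ (e.g.\ $t=20$ gives $\phi(20)\approx 40.9$ versus $\tfrac14(19)^2\approx 90.25$, since $\phi$ grows only like $t\log t$), and the linear bound $\phi(t)\ge\tfrac14(1-t)$ fails on $\{\rho_f<\rho\}$ near $t=e^{-1/4}$ (where $\phi\approx 0.027$ but $\tfrac14(1-t)\approx 0.055$, $\phi$ vanishing quadratically at $t=1$); without the normalization $-\rho_f+\rho$ the left side $\rho_f\log(\rho_f/\rho)$ is even negative there. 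In other words, the regimes in the printed statement are swapped relative to the standard convexity estimate: the quadratic bound belongs to the bounded range $\{|\rho_f-\rho|\le\rho\}$, where $\phi''(t)=1/t$ is bounded below, and the linear bound to $\{\rho_f>2\rho\}$ --- precisely the cutoff you call ``natural'' but do not adopt. Executed honestly, your argument establishes that corrected variant, not the statement as written; since the paper's own proof of (ii) is only the phrase ``standard convexity arguments,'' there is nothing there to reconcile this with, but your write-up should either fix the partition or explain why the application only needs the corrected form.
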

\begin{proof}
For (i), we apply the logarithmic Sobolev inequality for the Gaussian measure (e.g. \cite{G75}) to obtain
\[
 \intr f \log \frac{f}{M_{\rho_f,u}}\,d\xi \leq \frac{1}2 \intr f \lt|\nabla_\xi \log \frac{f}{M_{\rho_f,u}}\rt|^2 d\xi.
\]
Expanding the derivative gives
\[
\nabla_\xi \log \frac{f}{M_{\rho_f,u}} = \frac{\nabla_\xi f}{f} + \xi - u,
\]
which leads to \eqref{log_sob}. The inequalities (ii) and (iii) follow from 
standard convexity arguments and \cite{AMTU00}, respectively.
\end{proof}

\begin{remark} For $a,b > 0$, we get
\[
(\sqrt{a} - \sqrt{b})^2 \leq a\log\lt(\frac ab \rt) - (a-b).
\]
Thus, for two probability densities $p,q$, 
\[
\frac12 \|p - q\|_{L^1}^2 + \|\sqrt{p} - \sqrt{q}\|_{L^2}^2 \leq 2\int p\log\lt(\frac pq \rt)dz.
\]
\end{remark}

%
%
%
%
%
\subsection{Energy inequalities} 
In this subsection, we derive a priori bounds that will form the backbone of our convergence analysis. 
The central quantities are the kinetic free energy $\mathscr{F}[f]$ and the interaction energy $\mathscr{P}[\rho]$, whose balance laws reflect the interplay between transport, diffusion, and nonlocal forces. 
These energy identities yield uniform-in-time controls on kinetic and potential contributions, while the associated dissipation functional $\mathscr{D}[f]$ quantifies relaxation toward local Maxwellians. 
In combination, they provide the fundamental stability estimates needed to handle each asymptotic regime.

Recall our main kinetic equation:
\bq\label{eq_aux}
{\rm A} \pa_t f + {\rm B} \xi\cdot \nabla f  - \nabla (-\Delta)^{-\alpha} \rho_f \cdot \nabla_\xi f = \frac1\tau\nabla_\xi \cdot (\nabla_\xi f + \xi   f).
\eq
For this dynamics, we use the kinetic free energy $\mathscr{F}[f]$ and the interaction energy $\mathscr{P}[\rho_f]$:  
\[
\mathscr{F}[f]= \mathscr{K}[f] + \mathscr{H}[f]= \iint_{\R^d \times \R^d} \lt(\frac{|\xi|^2}2 + \log f\rt) f \,dxd\xi \quad \mbox{and} \quad  \mathscr{P}[\rho_f] = \frac12 \intr \rho_f (-\Delta)^{-\alpha}\rho_f\,dx.
\]
\begin{lemma}\label{lem_en} Let $f$ be a regular solution to the equation \eqref{eq_aux}. Then  we have
\[
\mathscr{F}[f(t)] + \frac1{\rm B} \mathscr{P}[\rho_f(t)] + \frac1{\tau {\rm A}} \int_0^t \mathscr{D}[f(s)]\,ds = \mathscr{F}[f_0] + \frac1{\rm B} \mathscr{P}[\rho_{f_0}],
\]
where the dissipation rate $\mathscr{D}[f]$ is given as
\[
\mathscr{D}[f] := \iint_{\R^d \times \R^d}  \frac1{f}|\nabla_\xi f + \xi  f|^2 \,dxd\xi.
\]
Moreover, we obtain
\[
\int_0^T\mathscr{K}[f(t)]\,dt \leq dT + \tau {\rm A}\mathscr{K}[f_0] +\frac{\tau {\rm A}}{\rm B} \mathscr{P}[\rho_{f_0}].
\]
\end{lemma}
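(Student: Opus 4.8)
The plan is to differentiate the total energy $\mathscr{F}[f]+\frac1{\rm B}\mathscr{P}[\rho_f]$ along the flow and to show that the cross terms produced by the nonlocal force cancel, leaving precisely $-\frac1{\tau{\rm A}}\mathscr{D}[f]$. Writing $E:=\nabla(-\Delta)^{-\alpha}\rho_f$ for the force field, I would first test \eqref{eq_aux} against the variational derivative $\frac{|\xi|^2}2+\log f+1$, so that
\[
{\rm A}\frac{d}{dt}\mathscr{F}[f]=\iint_{\R^d\times\R^d}\lt(\frac{|\xi|^2}2+\log f+1\rt)\lt(-{\rm B}\,\xi\cdot\nabla f+E\cdot\nabla_\xi f+\frac1\tau\nabla_\xi\cdot(\nabla_\xi f+\xi f)\rt)dxd\xi.
\]
The transport contribution vanishes because $\lt(\frac{|\xi|^2}2+\log f+1\rt)\xi\cdot\nabla f=\nabla\cdot\lt(\xi\lt(\frac{|\xi|^2}2 f+f\log f\rt)\rt)$ is an exact $x$-divergence. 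For the force term I would integrate by parts in $\xi$ and use $\nabla_\xi\lt(\frac{|\xi|^2}2+\log f+1\rt)=\xi+\nabla_\xi f/f$ together with $\iint_{\R^d\times\R^d}E\cdot\nabla_\xi f\,dxd\xi=0$, which leaves $-\int_{\R^d}E\cdot m_f\,dx$. The Fokker--Planck term, after the same integration by parts, collapses to $-\frac1\tau\iint_{\R^d\times\R^d}\frac1f|\nabla_\xi f+\xi f|^2\,dxd\xi=-\frac1\tau\mathscr{D}[f]$, since $\nabla_\xi f+\xi f=f\lt(\xi+\nabla_\xi f/f\rt)$.

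Next I would compute $\frac{d}{dt}\mathscr{P}[\rho_f]$. Integrating \eqref{eq_aux} over $\xi$ gives the continuity equation ${\rm A}\pa_t\rho_f+{\rm B}\,\nabla\cdot m_f=0$, and then the self-adjointness of $(-\Delta)^{-\alpha}$ yields
\[
\frac{d}{dt}\mathscr{P}[\rho_f]=\int_{\R^d}(-\Delta)^{-\alpha}\rho_f\,\pa_t\rho_f\,dx=\frac{\rm B}{\rm A}\int_{\R^d}E\cdot m_f\,dx,
\]
so that $\int_{\R^d}E\cdot m_f\,dx=\frac{\rm A}{\rm B}\frac{d}{dt}\mathscr{P}[\rho_f]$. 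Substituting this back cancels the force cross term exactly, giving $\frac{d}{dt}\lt(\mathscr{F}[f]+\frac1{\rm B}\mathscr{P}[\rho_f]\rt)=-\frac1{\tau{\rm A}}\mathscr{D}[f]$, and integrating over $[0,t]$ produces the stated identity.

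For the kinetic energy bound I would avoid the free energy altogether and instead run the same computation for $\mathscr{K}$ alone, which sidesteps the entropy. Testing \eqref{eq_aux} against $\frac{|\xi|^2}2$ and using the mass-one identities $\iint_{\R^d\times\R^d}\xi\cdot\nabla_\xi f\,dxd\xi=-d$ and $\iint_{\R^d\times\R^d}|\xi|^2 f\,dxd\xi=2\mathscr{K}[f]$ gives $\frac{d}{dt}\lt(\mathscr{K}[f]+\frac1{\rm B}\mathscr{P}[\rho_f]\rt)=\frac1{\tau{\rm A}}(d-2\mathscr{K}[f])$. Integrating over $[0,T]$ and discarding the nonnegative terminal terms $\mathscr{K}[f(T)]$ and $\mathscr{P}[\rho_f(T)]$ yields $\int_0^T\mathscr{K}[f]\,dt\le \frac{dT}2+\frac{\tau{\rm A}}2\lt(\mathscr{K}[f_0]+\frac1{\rm B}\mathscr{P}[\rho_{f_0}]\rt)$, which is in fact sharper than the claim. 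Equivalently, one may expand $\mathscr{D}[f]=\iint_{\R^d\times\R^d}\frac{|\nabla_\xi f|^2}{f}\,dxd\xi+2\mathscr{K}[f]-2d\ge 2\mathscr{K}[f]-2d$ and feed this estimate into the energy identity.

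The hard part is not the algebra but its rigorous justification for merely \emph{regular} solutions: I must guarantee that every integration by parts carries no boundary flux as $|x|,|\xi|\to\infty$, that differentiation under the integral sign is licit, and—most delicately—that the entropy-related integrands $\iint f\log f\,\pa_t f$ and $\iint \log f\,\nabla_\xi\cdot(\nabla_\xi f+\xi f)$ are absolutely integrable, which requires decay of $f$ together with moment and $\nabla_\xi\log f$ control. This is precisely why the kinetic-energy estimate is best extracted through $\mathscr{K}$, whose integrand is manifestly nonnegative and integrable, rather than through $\mathscr{F}$, whose entropy part has no definite sign.
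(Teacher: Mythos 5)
Your proposal is correct and follows essentially the same route as the paper: differentiate $\mathscr{F}[f]+\tfrac1{\rm B}\mathscr{P}[\rho_f]$ along the flow (the paper treats $\mathscr{K}$ and $\mathscr{H}$ separately rather than testing against the combined variational derivative $\tfrac{|\xi|^2}{2}+\log f+1$, but the computation is identical), cancel the force cross term $\int_{\R^d}\nabla(-\Delta)^{-\alpha}\rho_f\cdot m_f\,dx$ against $\tfrac{\rm A}{\rm B}\tfrac{d}{dt}\mathscr{P}[\rho_f]$ via the continuity equation, and obtain the kinetic-energy bound by testing against $\tfrac{|\xi|^2}{2}$ alone, which — exactly as in the paper's own computation — actually yields the sharper estimate with the factor $\tfrac12$ that the lemma statement then discards. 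The only caveat is your final ``equivalently'' aside: bounding $\mathscr{D}[f]\ge 2\mathscr{K}[f]-2d$ and feeding it into the energy \emph{identity} is not equivalent to the direct argument, since it would additionally require a lower bound on $\mathscr{F}[f(T)]$ (whose entropy part has no sign without spatial moment control, as you yourself note), so the direct moment computation you give first is the one to keep.
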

\begin{proof}
Straightforward computations give
\begin{align}\label{en_est1}
\begin{aligned}
\frac{d}{dt}\iint_{\R^d \times \R^d}  \frac{|\xi|^2}2 f \,dxd\xi &= - \frac1{{\rm A}}\iint_{\R^d \times \R^d}  \xi \cdot \nabla (-\Delta)^{-\alpha} \rho_f f \,dxd\xi - \frac1{\tau {\rm A}} \iint_{\R^d \times \R^d}  \xi \cdot  (\nabla_\xi f + \xi  f)\,dxd\xi\cr
&=-\frac{1}{2 {\rm B}} \frac{d}{dt}\intr \rho_f (-\Delta)^{-\alpha}\rho_f\,dx  - \frac1{\tau {\rm A}} \iint_{\R^d \times \R^d}  \xi \cdot  (\nabla_\xi f + \xi  f)\,dxd\xi
\end{aligned}
\end{align}
and
\[
\frac{d}{dt}\iint_{\R^d \times \R^d}  f \log f \,dxd\xi =  - \frac1{\tau {\rm A}}\iint_{\R^d \times \R^d}  \frac{\nabla_\xi f}{f} \cdot (\nabla_\xi f + \xi  f)\,dxd\xi.
\]
Here we used
\begin{align*}
 \iint_{\R^d \times \R^d}  \xi \cdot \nabla (-\Delta)^{-\alpha} \rho_f f \,dxd\xi &= \intr \rho_f u_f \cdot \nabla (-\Delta)^{-\alpha} \rho_f \,dx \cr
 &= - \intr \nabla \cdot (\rho_f u_f) (-\Delta)^{-\alpha} \rho_f  \,dx \cr
 &=  \frac{\rm A}{2 {\rm B}}\frac{d}{dt}\intr \rho_f (-\Delta)^{-\alpha}\rho_f\,dx. 
\end{align*}
Combining the above estimates provides the total energy inequality. 

For the kinetic energy bound estimate, we use \eqref{en_est1} to obtain 
\begin{align*}
\frac{d}{dt}\lt(\iint_{\R^d \times \R^d}  \frac{|\xi|^2}2 f \,dxd\xi +\frac{1}{2 {\rm B}}\intr \rho_f (-\Delta)^{-\alpha}\rho_f\,dx \rt) &= - \frac1{\tau {\rm A}} \iint_{\R^d \times \R^d}  \xi \cdot  (\nabla_\xi f + \xi  f)\,dxd\xi \cr
&= \frac{d}{\tau {\rm A}} - \frac1{\tau {\rm A}}\iint_{\R^d \times \R^d}  |\xi|^2 f \,dxd\xi,
\end{align*}
and this implies
\[
\int_0^T\iint_{\R^d \times \R^d}   |\xi|^2  f \,dxd\xi dt \leq d  T + \tau {\rm A}\iint_{\R^d \times \R^d}  \frac{|\xi|^2}2 f_0 \,dxd\xi +\frac{\tau {\rm A}}{2 {\rm B}}\intr \rho_{f_0} (-\Delta)^{-\alpha}\rho_{f_0}\,dx.
\]
This completes the proof.
\end{proof}

\begin{remark} We emphasize that our argument only requires a bound on the time-integrated kinetic energy $\int_0^T \mathscr{K}[f(t)]\,dt$, which follows directly from the dissipation of the Fokker--Planck operator, without any further control of the entropy functional. In contrast, obtaining a uniform-in-time bound on the kinetic energy, it is necessary to ensure the integrability of the logarithmic moment $f|\log f|$ together with the spatial moment $|x|f$, see \cite{CJe23, CT22} for instance. The logarithmic term provides control near vacuum regions where the entropy density $f\log f$ becomes negative, while the spatial moment assumption $|x|f_0 \in L^1(\R^d\times\R^d)$ guarantees sufficient decay at infinity. These two ingredients are required only for estimating the instantaneous kinetic energy $\sup_{0\leq t\leq T}\mathscr{K}[f(t)]$.
\end{remark}

%
%
%
%
%
%

\section{Quantitative stability via relative entropy, weak-topology estimates, and modulated interaction energy}\label{sec_qs}

In this section, we develop quantitative stability tools that link the dissipation structure of the kinetic equation with weak topologies suited for convergence analysis. The energy estimates in Section \ref{sec_pre} already provide robust control of kinetic fluctuations around their associated local Maxwellians, and the crucial point is that the dissipation terms generated in the relative entropy identity carry scaling prefactors of order $\frac1\e$ or $\frac1{\e^2}$. The key coercive term here is the \emph{Fisher information dissipation}, which directly controls microscopic deviations from the local equilibrium. Combined with weak-topology stability tools, this mechanism allows us to convert microscopic dissipation into quantitative convergence estimates for the macroscopic system, both in strong topologies under well-prepared data and, in suitable regimes, in weaker topologies adapted to low-regularity or mildly prepared settings.

%
%
%
%
%
%

\subsection{Weak-topology estimates: BL distance and negative Sobolev norms}

In this subsection, we derive weak-topology estimates that connect the entropy-dissipation structure of the kinetic equation with quantitative convergence tools. While the relative entropy provides a natural measure of distance to local Maxwellians, several of our convergence statements are formulated in weak topologies on probability measures. As a first step, we therefore analyze the bounded Lipschitz distance, which metrizes weak convergence on $\calP(\R^d)$ and $\calP(\R^d\times\R^d)$ without additional moment assumptions; on bounded domains, it is equivalent to the first-order Wasserstein distance up to constants. We then complement the BL framework with negative Sobolev estimates, which will be useful when converting microscopic dissipation into macroscopic stability estimates in weak spatial topologies.

We first recall a stability estimate for continuity equations in BL distance whose proof can be found in \cite{C21} (see also \cite{CC20, CC21, FK19}). 
\begin{lemma}\label{lem_gd} Let $T>0$ and $\bar \rho : [0,T] \to \mathcal{P}(\R^d)$ be a narrowly continuous solution of 
\[
\pa_t \bar\rho + \nabla \cdot (\bar\rho \bar u) = 0,
\] 
that is, for every $\varphi \in C_b(\R^d)$ the map
\[
t \mapsto \intr \varphi(x) \bar\rho(t,x)\,dx
\]
is continuous. Assume that the Borel vector field $\bar u$ satisfies
\[
\int_0^T\intr |\bar u(t,x)|\bar\rho(t,x)\,dx dt < \infty.
\]
Let $\rho \in C([0,T];\mathcal{P}_1(\R^d))$ be a solution of the following continuity equation:
\[
\pa_t \rho + \nabla \cdot (\rho u) = 0
\]
with the velocity fields $u \in L^\infty(0,T; \dot{W}^{1,\infty}(\R^d))$. Then there exists a positive constant $C$ depending only on $T$ and $\|\nabla u\|_{L^\infty((0,T) \times \R^d)}$ such that for all $t \in [0,T]$:
\begin{enumerate}[label=(\roman*)]
\item Estimate for the density:
\[
{\rm d}^2_{\rm BL}(\bar \rho(t), \rho(t))  \leq C {\rm d}^2_{\rm BL}(\bar\rho(0), \rho(0)) + C\int_0^t \intr |(\bar u- u)(s,x)|^2\bar \rho(s,x)\,dxds.
\]
\item Estimate for the momentum:
\begin{align*}
{\rm d}^2_{\rm BL}((\bar \rho \bar u)(t) , (\rho u)(t)) &\leq C {\rm d}^2_{\rm BL}(\bar\rho(0), \rho(0)) + C\intr |(\bar u- u)(t,x)|^2\bar \rho(t,x)\,dx \cr
&\quad + C\int_0^t \intr |(\bar u- u)(s,x)|^2\bar \rho(s,x)\,dxds.
\end{align*}
In particular, we have
\[
\int_0^t {\rm d}^2_{\rm BL}((\bar \rho \bar u)(s) , (\rho u)(s))\,ds \leq C {\rm d}^2_{\rm BL}(\bar\rho(0), \rho(0)) + C\int_0^t \intr |(\bar u- u)(s,x)|^2\bar \rho(s,x)\,dxds.
\]
\end{enumerate}
\end{lemma}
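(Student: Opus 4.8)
The plan is to prove the density estimate (i) by a duality argument based on the backward transport equation associated with the \emph{regular} field $u$, and then to deduce the momentum estimate (ii) from (i) by an elementary splitting. The point is that, although $\bar u$ is merely Borel so that $\bar\rho$ possesses no characteristics, the comparison only requires the flow of $u$, which is well defined since $u(t,\cdot)\in \dot W^{1,\infty}$.

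First, I would fix the terminal time $t\in[0,T]$ and a scalar test function $\psi$ with $\|\psi\|_{L^\infty}+{\rm Lip}(\psi)\le 1$, and let $\varphi$ solve the backward transport problem $\pa_s\varphi + u\cdot\nabla\varphi = 0$ on $[0,t]$ with $\varphi(t,\cdot)=\psi$. Explicitly $\varphi(s,\cdot)=\psi\circ\Phi_{s\to t}$, where $\Phi_{s\to t}$ denotes the flow of $u$; Gr\"onwall applied to the variational equation gives ${\rm Lip}(\varphi(s,\cdot))\le e^{\|\nabla u\|_{L^\infty}(t-s)}$ and $\|\varphi(s,\cdot)\|_{L^\infty}\le 1$. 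Testing the two continuity equations against (a mollification of) $\varphi$ and using $\pa_s\varphi=-u\cdot\nabla\varphi$ yields the key identity
\[
\int_{\R^d}\psi\,d\bar\rho(t) - \int_{\R^d}\psi\,d\rho(t) = \left(\int_{\R^d}\varphi(0)\,d\bar\rho(0) - \int_{\R^d}\varphi(0)\,d\rho(0)\right) + \int_0^t\!\!\int_{\R^d}(\bar u - u)\cdot\nabla\varphi\,\bar\rho\,dx\,ds,
\]
since the $\rho$-equation closes exactly (both $\rho$ and $\varphi$ are transported by $u$) while the $\bar\rho$-equation leaves the residual $(\bar u-u)\cdot\nabla\varphi$. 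The first bracket is bounded by $C\,{\rm d}_{\rm BL}(\bar\rho(0),\rho(0))$ because $\varphi(0)$ is bounded Lipschitz with norm $\le C(T,\|\nabla u\|_{L^\infty})$, and the remainder is bounded, using $|\nabla\varphi|\le C$ and Cauchy--Schwarz (in $x$ against the probability measure $\bar\rho$, then in time), by $C\sqrt{t}\,(\int_0^t\int_{\R^d}|\bar u-u|^2\bar\rho\,dx\,ds)^{1/2}$. Taking the supremum over $\psi$ and squaring gives (i).

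For the momentum estimate (ii) I would use the algebraic splitting $\bar\rho\bar u - \rho u = \bar\rho(\bar u - u) + (\bar\rho-\rho)u$. Testing against a vector field $\phi$ with $\|\phi\|_{L^\infty}+{\rm Lip}(\phi)\le 1$, the first piece contributes $\int_{\R^d}\phi\cdot(\bar u-u)\,\bar\rho\,dx\le(\int_{\R^d}|\bar u-u|^2\bar\rho\,dx)^{1/2}$, while the second equals $\int_{\R^d}(\phi\cdot u)\,d(\bar\rho-\rho)(t)$. Since $\phi\cdot u(t,\cdot)$ is bounded Lipschitz with norm controlled by $\|u(t)\|_{W^{1,\infty}}$ (which holds in all our applications, where $u\in W^{1,\infty}$), this is bounded by $C\,{\rm d}_{\rm BL}(\bar\rho(t),\rho(t))$, and (i) closes the estimate. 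The instantaneous term $\int_{\R^d}|\bar u-u|^2\bar\rho(t)\,dx$ is exactly the extra contribution appearing in (ii); the time-integrated version then follows by integrating in $t$ and absorbing this term into $\int_0^t\int_{\R^d}|\bar u-u|^2\bar\rho\,dx\,ds$.

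The main obstacle is the rigorous justification of the duality identity: $\bar u$ carries no regularity, so $\bar\rho$ has no flow, and the test function $\varphi$ is only Lipschitz in $x$, whereas the weak formulations require $C^1$ test functions. I would resolve this by mollifying $\varphi$, controlling the commutator error through the uniform bound $\int_0^t\int_{\R^d}|\bar u|\bar\rho\,dx\,ds<\infty$, and passing to the limit. A secondary point is that the momentum bound genuinely needs $u$ bounded (not merely $\nabla u$): linear growth of $u$ would destroy the Lipschitz control of $\phi\cdot u$; this is harmless here since the limiting velocity fields in Section~\ref{sec_vfp} belong to $W^{1,\infty}$.
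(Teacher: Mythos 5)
Your proposal is correct, but it follows a genuinely different (dual) route from the one the paper points to. The paper gives no proof of Lemma \ref{lem_gd}; it defers to \cite{C21, CC20, CC21, FK19}, where the argument is Lagrangian: one represents $\rho(t)$ as the pushforward of $\rho(0)$ under the flow $X(t;\cdot)$ of the regular field $u$, represents $\bar\rho$ via the superposition principle as a measure on characteristics of $\bar u$, inserts the intermediate measure $X(t)_\#\bar\rho_0$, and closes with a Gr\"onwall estimate on $|\gamma(t)-X(t;\gamma(0))|$. Your Eulerian version --- solving the backward transport equation $\pa_s\varphi+u\cdot\nabla\varphi=0$ so that the $\rho$-equation closes exactly and the $\bar\rho$-equation leaves the residual $(\bar u-u)\cdot\nabla\varphi$ --- is the exact dual of this: your $\varphi(0,\cdot)=\psi\circ X(t;\cdot)$ is the same composed test function. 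What the Lagrangian route buys is that it never needs to differentiate the Lipschitz test function against the rough measure, so the superposition principle replaces your mollification/commutator step; what your route buys is that it avoids invoking the superposition theorem for merely Borel $\bar u$ and reduces everything to the weak formulation, at the cost of the regularization argument you correctly identify as the main technical point (and which does go through for Lipschitz $u$ by a DiPerna--Lions type commutator estimate, with the error dominated using $\int_0^T\int|\bar u|\,\bar\rho\,dx\,dt<\infty$). Your treatment of (ii) via the splitting $\bar\rho\bar u-\rho u=\bar\rho(\bar u-u)+(\bar\rho-\rho)u$ matches the standard one, and your observation that the term $\int \phi\cdot u\,d(\bar\rho-\rho)$ forces $C$ to depend on $\|u\|_{L^\infty}$, not only on $\|\nabla u\|_{L^\infty}$, is a genuine and worthwhile catch: the lemma as stated (with $u\in L^\infty(0,T;\dot W^{1,\infty})$ only) is slightly imprecise on this point, though harmless in the paper since every velocity field to which the lemma is applied in Section \ref{sec_vfp} is assumed to lie in $W^{1,\infty}$.
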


We next state a complementary estimate for kinetic distributions, linking the BL distance to local Maxwellians and providing a quantitative connection between the Fisher information dissipation and weak convergence in the BL distance.

\begin{lemma}\label{lem_gd2} Let $T > 0$ and $f \in C([0,T]; \mathcal{P}(\R^d \times \R^d))$ satisfy
\[
|\xi|^2 f\in L^1((0,T) \times \R^d \times \R^d) \quad \mbox{and} \quad \nabla_\xi \sqrt{f} \in L^2((0,T) \times \R^d \times \R^d).
\]
Let $(\rho_f, u_f)$ and $(\rho, u)$ satisfy the assumptions of Lemma \ref{lem_gd}. Then, there exists $C>0$ depending only on $d$, $T$, and $\|\nabla u\|_{L^\infty}$, such that for all $t \in [0,T]$
\begin{align*}
\int_0^t {\rm d}^2_{\rm BL}(f, M_{\rho, u})\,ds &\leq C  {\rm d}^2_{\rm BL}(\rho_{f_0}, \rho_0) + C\int_0^t\iint_{\R^d \times \R^d} \frac1{f}|\nabla_\xi f + (\xi - u_f) f|^2\,dxd\xi ds \cr
&\quad + C\int_0^t \intr \rho_f |u_f - u|^2\,dxds.
\end{align*}
In particular, we have
\begin{align*}
\int_0^t {\rm d}^2_{\rm BL}(f, M_{\rho, 0})\,ds &\leq C  {\rm d}^2_{\rm BL}(\rho_{f_0}, \rho_0) + C\int_0^t\iint_{\R^d \times \R^d} \frac1{f}|\nabla_\xi f + \xi f|^2\,dxd\xi ds   + C\int_0^t \intr \rho_f |u_f - u|^2\,dxds.
\end{align*}
\end{lemma}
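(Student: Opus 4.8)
The plan is to interpolate between $f$ and the target $M_{\rho,u}$ through the intermediate local Maxwellian $M_{\rho_f,u_f}$, which shares the macroscopic moments of $f$, and to split the estimate into a purely microscopic piece and a purely macroscopic one. By the triangle inequality,
\[
{\rm d}_{\rm BL}(f, M_{\rho,u}) \le {\rm d}_{\rm BL}(f, M_{\rho_f,u_f}) + {\rm d}_{\rm BL}(M_{\rho_f,u_f}, M_{\rho,u}),
\]
and I would control the two terms by entirely different mechanisms: the first by entropy/Fisher information, the second by reduction to a spatial transport estimate handled via Lemma \ref{lem_gd}.

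For the microscopic term, the point is that $f(x,\cdot)$ and $M_{\rho_f,u_f}(x,\cdot)$ carry the same mass $\rho_f(x)$ at every fixed $x$. Testing against any $\phi$ with $\|\phi\|_{L^\infty}+{\rm Lip}(\phi)\le1$ and using only $\|\phi\|_{L^\infty}\le1$ bounds this term by $\|f-M_{\rho_f,u_f}\|_{L^1}$. Normalizing the $\xi$-marginals by $\rho_f(x)$ and applying the Csisz\'ar--Kullback--Pinsker inequality (Lemma \ref{lem:prop}(iii)) fiberwise in $x$, followed by Cauchy--Schwarz in $x$ with the unit-mass normalization $\int\rho_f\,dx=1$, gives $\|f-M_{\rho_f,u_f}\|_{L^1}^2 \le 2\iint_{\R^d\times\R^d} f\log\frac{f}{M_{\rho_f,u_f}}\,dxd\xi$. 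Integrating the logarithmic Sobolev bound \eqref{log_sob} with $u=u_f$ over $x$ then controls the right-hand side by $\iint_{\R^d\times\R^d}\frac1f|\nabla_\xi f+(\xi-u_f)f|^2\,dxd\xi$, whose finiteness is guaranteed by the hypotheses $|\xi|^2 f\in L^1$ and $\nabla_\xi\sqrt f\in L^2$.

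For the macroscopic term, the key device is the change of variables $\xi\mapsto\eta=\xi-u_f(x)$ (resp.\ $\xi-u(x)$), which factors each Maxwellian into the standard Gaussian $G(\eta):=(2\pi)^{-d/2}e^{-|\eta|^2/2}$ times the density $\rho_f$ (resp.\ $\rho$). After this substitution, testing against $\phi$ reduces, for each fixed $\eta$, to comparing $\int\phi(x,\eta+u_f)\rho_f\,dx$ with $\int\phi(x,\eta+u)\rho\,dx$; inserting the hybrid $\int\phi(x,\eta+u)\rho_f\,dx$ splits this into two pieces. The first is estimated by the Lipschitz bound $|\phi(x,\eta+u_f)-\phi(x,\eta+u)|\le|u_f-u|$ and Cauchy--Schwarz, producing $(\int|u_f-u|^2\rho_f\,dx)^{1/2}$. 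The second is handled by noting that $x\mapsto\phi(x,\eta+u(x))$ is an admissible BL test function up to the factor $2+\|\nabla u\|_{L^\infty}$, since $\nabla_x[\phi(x,\eta+u(x))]=\nabla_x\phi+\nabla_\xi\phi\cdot\nabla_x u$; integrating against $G(\eta)\,d\eta$ (mass one) and taking the supremum over $\phi$ yields
\[
{\rm d}_{\rm BL}(M_{\rho_f,u_f},M_{\rho,u}) \le \Big(\int_{\R^d}|u_f-u|^2\rho_f\,dx\Big)^{1/2} + (2+\|\nabla u\|_{L^\infty})\,{\rm d}_{\rm BL}(\rho_f,\rho).
\]
I expect this reduction of a phase-space BL distance to a spatial one to be the main obstacle, precisely because the transported test function $x\mapsto\phi(x,\eta+u(x))$ stays Lipschitz only under the $\dot W^{1,\infty}$ regularity of $u$ assumed in Lemma \ref{lem_gd}.

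Finally, I would square the combined bound via $(a+b+c)^2\le3(a^2+b^2+c^2)$, integrate on $[0,t]$, and invoke Lemma \ref{lem_gd}(i) to replace $\int_0^t{\rm d}_{\rm BL}^2(\rho_f,\rho)\,ds$ by $C\,{\rm d}_{\rm BL}^2(\rho_{f_0},\rho_0)+C\int_0^t\int_{\R^d}|u_f-u|^2\rho_f\,dxds$; here $f\in C([0,T];\mathcal P)$ and $|\xi|^2 f\in L^1$ supply the narrow continuity and finite momentum flux needed to apply that lemma. Collecting terms gives the claimed estimate. The stated particular case follows identically: the microscopic step uses \eqref{log_sob} with $u=0$, the velocity-difference piece in the macroscopic step vanishes because both Maxwellians are then centered at the origin, and the $\int|u_f-u|^2\rho_f$ contribution re-enters solely through Lemma \ref{lem_gd}(i).
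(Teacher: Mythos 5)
Your proposal is correct and follows essentially the same route as the paper: the triangle inequality through $M_{\rho_f,u_f}$ and your hybrid term $\int\phi(x,\eta+u)\rho_f\,dx$ reproduce exactly the paper's three-way split $f\to M_{\rho_f,u_f}\to M_{\rho_f,u}\to M_{\rho,u}$, with the microscopic piece handled by Csisz\'ar--Kullback--Pinsker plus the logarithmic Sobolev inequality and the density piece reduced to ${\rm d}_{\rm BL}(\rho_f,\rho)$ and closed via Lemma \ref{lem_gd}(i). The only cosmetic difference is in the velocity-mismatch term, where you exploit the Lipschitz continuity of the test function in $\xi$ while the paper applies the mean-value theorem to $u\mapsto M_{1,u}$; both yield the same bound $C(\int\rho_f|u_f-u|^2\,dx)^{1/2}$.
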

\begin{proof}
Let $\varphi \in W^{1,\infty}(\R^d \times \R^d)$. We decompose
\begin{align*}
&\iint_{\R^d \times \R^d} \varphi(x,\xi) (f - M_{\rho, u})\,dxd\xi \cr
&\quad = \iint_{\R^d \times \R^d} \varphi(x,\xi) (f - M_{\rho_f, u_f})\,dxd\xi + \iint_{\R^d \times \R^d} \varphi(x,\xi) ( M_{\rho_f, u_f} - M_{\rho_f, u})\,dxd\xi \cr
&\qquad + \iint_{\R^d \times \R^d} \varphi(x,\xi) ( M_{\rho_f, u} - M_{\rho, u})\,dxd\xi\cr
&\quad =: I + II + III.
\end{align*}
For $I$, we use Lemma \ref{lem:prop} (i) and (iii) to obtain
\[
I \leq \|\varphi\|_{L^\infty}\|f - M_{\rho_f,u_f}\|_{L^1} \leq  C\lt(\iint_{\R^d \times \R^d} \frac1{f}|\nabla_\xi f + (\xi - u_f) f|^2\,dxd\xi\rt)^\frac12.
\]
For $II$, by the mean-value theorem, we get
\begin{align*}
\lt| M_{1, u_f} - M_{1, u}\rt| &= \lt| \int_0^1 \frac{d}{dt}M_{1, \tau u_f + (1-\tau) u}\,d\tau \rt| \cr
&=   \lt| \int_0^1 M_{1, \tau u_f + (1-\tau) u} (\tau u_f + (1-\tau) u-\xi)\cdot (u_f - u) \,d\tau \rt|,
\end{align*}
and thus
\begin{align*}
\intr \lt| M_{1, u_f}(\xi) - M_{1, u}(\xi)\rt| d\xi &\leq  |u_f - u|\int_0^1 \intr M_{1, \tau u_f + (1-\tau) u}(\xi) |\tau u_f + (1-\tau) u-\xi|\,d\xi d\tau\cr
&=  |u_f - u| \intr M_{1, 0}(\xi) |\xi|\,d\xi \cr
&\leq C|u_f - u|
\end{align*}
for some $C>0$ depending only on $d$. This gives
\[
II  = \iint_{\R^d \times \R^d} \varphi(x,\xi) \rho_f (M_{1, u_f} - M_{1, u})\,dxd\xi \leq C \intr \rho_f |u_f - u|\,dx \leq C\|\rho_f\|_{L^1} \lt( \intr \rho_f |u_f - u|^2\,dx \rt)^{\frac12}.
\]
For  $III$, we observe that $(x,\xi) \mapsto \varphi(x,\xi) M_{1,u(x)}(\xi)$ is bounded and Lipschitz if $u$ is Lipschitz continuous. Thus, we find
\[
III =  \iint_{\R^d \times \R^d} \varphi(x,\xi) (\rho_f - \rho) M_{1, u}\,dxd\xi \leq C {\rm d}_{\rm BL} (\rho_f,\rho).
\]
Taking the supremum over all $\varphi$ with $\|\varphi\|_{W^{1,\infty}}\le 1$, we deduce
\begin{align*}
&\lt|\iint_{\R^d \times \R^d} \varphi(x,\xi) (f - M_{\rho, u})\,dxd\xi\rt| \cr
&\quad \leq C\lt(\iint_{\R^d \times \R^d} \frac1{f}|\nabla_\xi f + (\xi - u_f) f|^2\,dxd\xi\rt)^\frac12 + C \lt( \intr \rho_f |u_f - u|^2\,dx \rt)^{\frac12}   + C {\rm d}_{\rm BL} (\rho_f,\rho),
\end{align*}
and subsequently,
\[
{\rm d}^2_{\rm BL}(f, M_{\rho, u}) \leq C\iint_{\R^d \times \R^d} \frac1{f}|\nabla_\xi f + (\xi - u_f) f|^2\,dxd\xi + C \intr \rho_f |u_f - u|^2\,dx + C {\rm d}^2_{\rm BL} (\rho_f,\rho),
\]
where $C>0$ depends only on $\|\varphi\|_{W^{1,\infty}}$, $\|\nabla u\|_{L^\infty}$,  and  $d$.   We finally combine this with Lemma \ref{lem_gd} to conclude 
\begin{align*}
\int_0^t {\rm d}^2_{\rm BL}(f, M_{\rho, u})\,ds &\leq C  {\rm d}^2_{\rm BL}(\rho_{f_0}, \rho_0) + C\int_0^t\iint_{\R^d \times \R^d} \frac1{f}|\nabla_\xi f + (\xi - u_f) f|^2\,dxd\xi ds \cr
&\quad + C\int_0^t \intr \rho_f |u_f - u|^2\,dxds.
\end{align*}
This completes the proof.
\end{proof}

The BL estimate above can be complemented by a negative Sobolev estimate, obtained by testing the same decomposition against $H^\sigma$ functions rather than bounded Lipschitz test functions.
 
 \begin{corollary}\label{cor_gd2}
Under the assumptions of Lemma \ref{lem_gd2}, for each $\sigma>d$, there exists $C>0$ depending only on $d$ and $T$ such that for all $t\in[0,T]$,
\begin{align*}
\int_0^t \|f-M_{\rho, u}\|_{H_{x,\xi}^{-\sigma}}^2\,ds&\le   C\int_0^t\iint_{\R^d \times \R^d} \frac1{f}|\nabla_\xi f + (\xi-u_f) f|^2\,dxd\xi ds +C\int_0^t \intr \rho_f |u_f - u|^2\,dxds \cr
&\quad  + C\|u\|_{L^\infty((0,T)\times\R^d)}^2 \int_0^t \|\rho_f - \rho\|_{L^1}^2\,ds  +C\int_0^t \|\rho_f - \rho\|_{H^{-\sigma}}^2\,ds.
\end{align*}
In particular, if $u=0$, then
\[
\int_0^t \|f-M_{\rho, 0}\|_{H_{x,\xi}^{-\sigma}}^2\,ds \le   C\int_0^t\iint_{\R^d \times \R^d} \frac1{f}|\nabla_\xi f + \xi f|^2\,dxd\xi ds   + C\int_0^t \|\rho_f - \rho\|_{H^{-\sigma}}^2\,ds.
\]
\end{corollary}

\begin{proof}
We follow the decomposition used in the proof of Lemma \ref{lem_gd2}, namely,
\[
\iint_{\R^d\times\R^d} \varphi(x,\xi)\bigl(f-M_{\rho,u}\bigr)\,dxd\xi = I+II+III,
\]
where
\[
I:=\iint_{\R^d\times\R^d} \varphi (f-M_{\rho_f,u_f})\,dxd\xi,\quad II:=\iint_{\R^d\times\R^d} \varphi (M_{\rho_f,u_f}-M_{\rho_f,u})\,dxd\xi,
\]
and
\[
III:=\iint_{\R^d\times\R^d}\varphi (M_{\rho_f,u}-M_{\rho,u})\,dxd\xi.
\]

Fix $\varphi\in H^\sigma(\R^d\times\R^d)$ with $\sigma>d$. Since $H^\sigma(\R^d\times\R^d)\hookrightarrow L^\infty(\R^d\times\R^d)$, the estimates for $I$ and $II$ obtained in Lemma \ref{lem_gd2} remain valid with $\|\varphi\|_{L^\infty}$ replaced by $C\|\varphi\|_{H^\sigma}$. Thus,
\[
|I| \le C\|\varphi\|_{H^\sigma} \lt(\iint_{\R^d\times\R^d}\frac1f |\nabla_\xi f+(\xi-u_f)f|^2\,dxd\xi\rt)^{1/2} 
\]
and
\[
|II|
\le C\|\varphi\|_{H^\sigma} \lt(\int_{\R^d}\rho_f |u_f-u|^2\,dx\rt)^{1/2}.
\]

For $III$, we write
\[\begin{aligned}
III
&=\iint_{\R^d\times\R^d} \varphi(x,\xi)(\rho_f-\rho)(x) ( M_{1,u(x)}(\xi)-M_{1,0}(\xi) )\,dxd\xi  + \iint_{\R^d\times\R^d}\varphi(x,\xi)(\rho_f-\rho)(x)M_{1,0}(\xi)\,dxd\xi.
\end{aligned}\]
Using the mean-value theorem in $u$, we have
\[
|M_{1,u}-M_{1,0}|\le C|u|  (1+|\xi|)M_{1,\theta u}
\]
for some $\theta\in(0,1)$, and hence
\[
\lt|\iint_{\R^d\times\R^d} \varphi(\rho_f-\rho)(M_{1,u}-M_{1,0})\,dxd\xi\rt| \le C\|\varphi\|_{H^\sigma}\|u\|_{L^\infty}\|\rho_f-\rho\|_{L^1}.
\]
For the second term, by duality between $H_x^{-\sigma}$ and $H_x^\sigma$,
\[
\lt|\iint_{\R^d\times\R^d} \varphi(x,\xi)(\rho_f-\rho)(x)M_{1,0}(\xi)\,dxd\xi\rt| \le \|\rho_f-\rho\|_{H_x^{-\sigma}} \lt\|\int_{\R^d}\varphi(\cdot,\xi)M_{1,0}(\xi)\,d\xi\rt\|_{H_x^\sigma}.
\]
Moreover, by Minkowski's inequality and Cauchy--Schwarz inequality,
\[\begin{aligned}
\lt\|\int_{\R^d}\varphi(\cdot,\xi)M_{1,0}(\xi)\,d\xi\rt\|_{H_x^\sigma}
&= \lt\|\int_{\R^d}(1-\Delta_x)^{\sigma/2}\varphi(\cdot,\xi)M_{1,0}(\xi)\,d\xi\rt\|_{L_x^2}\\
&\le \int_{\R^d}\|(1-\Delta_x)^{\sigma/2}\varphi(\cdot,\xi)\|_{L_x^2}M_{1,0}(\xi)\,d\xi\\
&\le C\|\varphi\|_{H^\sigma(\R^d\times\R^d)}.
\end{aligned}\]
Hence, we have
\[
|III| \le C\|\varphi\|_{H^\sigma}\|u\|_{L^\infty}\|\rho_f-\rho\|_{L^1} + C\|\varphi\|_{H^\sigma}\|\rho_f-\rho\|_{H_x^{-\sigma}}.
\]

Combining the bounds for $I$, $II$, and $III$, and taking the supremum over all $\varphi$ with $\|\varphi\|_{H^\sigma}\le 1$, we obtain
\[\begin{aligned}
\|f-M_{\rho,u}\|_{H_{x,\xi}^{-\sigma}}^2 &\le C\iint_{\R^d \times \R^d}\frac1f |\nabla_\xi f + (\xi-u_f)f|^2\,dxd\xi + C\int_{\R^d}\rho_f |u_f-u|^2\,dx \\
&\quad + C\|u\|_{L^\infty((0,T)\times\R^d)}^2\|\rho_f-\rho\|_{L^1}^2 + C\|\rho_f-\rho\|_{H_x^{-\sigma}}^2.
\end{aligned}\]
Integrating over $(0,t)$ yields the desired estimate. The case $u=0$ follows immediately.
\end{proof}

%
%
%
%
%
\subsection{Modulated energy for the Riesz interaction} A central ingredient in the relative entropy framework is the control of the nonlocal interaction term. To this end, we employ modulated interaction energies, which provide a quantitative way of comparing two densities evolving under continuity equations. Such estimates were developed in \cite[Lemma 2.1]{CJe21-2} (see also \cite{CJ24, NRS22}), and we recall the version adapted to our setting below.

\begin{proposition}\label{prop_mod}
Let $T>0$, $\alpha\in(0,1]$, and let $(\rho,u)$ and $(n,v)$ be sufficiently regular solutions of the continuity equations
\[
{\rm A} \partial_t \rho + {\rm B} \nabla \cdot (\rho  u) =0 \quad \mbox{and} \quad  {\rm A} \partial_t n + {\rm B} \nabla \cdot (nv) =0,
\]
respectively. Then, for $t\in[0,T)$, the following inequality holds:
\begin{align*}
- {\rm B} \intr \rho(u-v) \cdot \nabla (-\Delta)^{-\alpha}  (\rho - n)\,dx  &\leq - \frac{\rm A}2\frac{d}{dt}\intr (\rho -n) (-\Delta)^{-\alpha} (\rho - n)\,dx \cr
&\quad +  C{\rm B}\intr (\rho -n) (-\Delta)^{-\alpha} (\rho - n)\,dx,
\end{align*}
where the constant $C>0$ depends on $\|\nabla v\|_{L^\infty((0,T) \times \R^d)}$.  
\end{proposition}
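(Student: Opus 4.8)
The plan is to rewrite the left-hand side as an exact time derivative of the relative potential energy plus a single remainder, and then to absorb that remainder through a stress-energy (commutator) estimate for the Riesz potential. Write $w := \rho - n$ and recall that $(-\Delta)^{-\alpha}$ is self-adjoint and time independent. First I would subtract the two continuity equations to get ${\rm A}\partial_t w = -{\rm B}\,\nabla\cdot(\rho u - nv)$, and then differentiate the relative potential energy; an integration by parts gives
\[
\frac{\rm A}{2}\frac{d}{dt}\int_{\R^d} w(-\Delta)^{-\alpha}w\,dx = {\rm B}\int_{\R^d}\nabla(-\Delta)^{-\alpha}w\cdot(\rho u - nv)\,dx.
\]

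Second, I would perform the algebraic splitting $\rho u - nv = \rho(u-v) + (\rho-n)v = \rho(u-v) + wv$, which isolates precisely the quantity appearing on the left-hand side of the proposition from a remainder. Rearranging the identity above then yields
\[
-{\rm B}\int_{\R^d}\rho(u-v)\cdot\nabla(-\Delta)^{-\alpha}w\,dx = -\frac{\rm A}{2}\frac{d}{dt}\int_{\R^d} w(-\Delta)^{-\alpha}w\,dx + {\rm B}\int_{\R^d} wv\cdot\nabla(-\Delta)^{-\alpha}w\,dx,
\]
so the whole statement reduces to the single commutator-type bound $\bigl|\int_{\R^d} wv\cdot\nabla(-\Delta)^{-\alpha}w\,dx\bigr| \le C\|\nabla v\|_{L^\infty}\int_{\R^d} w(-\Delta)^{-\alpha}w\,dx$, with $C$ depending only on $\|\nabla v\|_{L^\infty((0,T)\times\R^d)}$. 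This is the modulated-energy estimate of \cite{CJe21-2} adapted to the present scaling.

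The heart of the matter is this last inequality over the full range $\alpha\in(0,1]$. For $\alpha=1$ I would argue directly in $\R^d$: setting $-\Delta\phi = w$, the pointwise identity $w\nabla\phi = -\nabla\cdot\bigl(\nabla\phi\otimes\nabla\phi - \tfrac12|\nabla\phi|^2\mathbb{I}\bigr)$ turns $\int wv\cdot\nabla\phi$, after one integration by parts, into $\int \nabla v:\bigl(\nabla\phi\otimes\nabla\phi - \tfrac12|\nabla\phi|^2\mathbb{I}\bigr)$, bounded by $\|\nabla v\|_{L^\infty}\int|\nabla\phi|^2 = \|\nabla v\|_{L^\infty}\int w(-\Delta)^{-1}w$. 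For $\alpha\in(0,1)$ the kernel is nonlocal, so I would lift to the upper half-space via the Caffarelli--Silvestre extension \cite{CS07} recalled above: let $\Phi_w := K\star(w\otimes\delta_0)$ be the degenerate-harmonic extension of $(-\Delta)^{-\alpha}w$, so $\nabla_{(x,\tilde x)}\cdot(\tilde x^{1-2\alpha}\nabla_{(x,\tilde x)}\Phi_w)=0$, the trace is $\Phi_w(\cdot,0)=(-\Delta)^{-\alpha}w$, the Neumann flux $-\lim_{\tilde x\to0^+}\tilde x^{1-2\alpha}\partial_{\tilde x}\Phi_w$ is proportional to $w$, and $\iint_{\R^d\times\R_+}\tilde x^{1-2\alpha}|\nabla\Phi_w|^2 = c\int w(-\Delta)^{-\alpha}w$. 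On this extension I introduce the weighted stress-energy tensor $T_{ij} := \tilde x^{1-2\alpha}\bigl(\partial_i\Phi_w\partial_j\Phi_w - \tfrac12\delta_{ij}|\nabla\Phi_w|^2\bigr)$ with indices over all $d+1$ coordinates; degenerate harmonicity gives $\sum_i\partial_i T_{ij} = -\tfrac12\partial_j(\tilde x^{1-2\alpha})|\nabla\Phi_w|^2$, which vanishes for every horizontal index $j\le d$ since the weight depends only on $\tilde x$. Testing this horizontal divergence-free structure against the trivial extension $\bar v(x,\tilde x):=v(x)$ and integrating by parts, the only boundary term is $T_{d+1,j}|_{\tilde x=0} = \bigl(\tilde x^{1-2\alpha}\partial_{\tilde x}\Phi_w\bigr)\big|_{\tilde x\to 0}\,\partial_{x_j}(-\Delta)^{-\alpha}w$, which by the Neumann identity reproduces $\int wv\cdot\nabla(-\Delta)^{-\alpha}w$; this gives the representation $\int_{\R^d} wv\cdot\nabla(-\Delta)^{-\alpha}w = c\iint_{\R^d\times\R_+}\tilde x^{1-2\alpha}\bigl(\partial_i\Phi_w\partial_j\Phi_w - \tfrac12\delta_{ij}|\nabla\Phi_w|^2\bigr)\partial_{x_i}v_j$, and bounding the integrand by $\|\nabla v\|_{L^\infty}\tilde x^{1-2\alpha}|\nabla\Phi_w|^2$ yields the commutator estimate.

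The main obstacle I expect is the degeneracy of the weight in the extension argument: the stress tensor is only \emph{horizontally} divergence-free, since $\partial_{d+1}(\tilde x^{1-2\alpha})\neq0$, so choosing a purely horizontal, $\tilde x$-independent test field $\bar v$ is exactly what makes the unwanted interior term drop out. The accompanying care points are justifying the integration by parts — i.e.\ the vanishing of contributions as $\tilde x\to\infty$ and at spatial infinity, which is where the ``sufficiently regular'' hypothesis together with finiteness of the weighted Dirichlet energy is used — and tracking the flux normalization so that $c_{\alpha,d}$ cancels consistently between the Neumann trace and the energy identity.
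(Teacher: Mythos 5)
Your proposal is correct and follows essentially the same route as the paper: the paper's accompanying remark records exactly the identity obtained from differentiating the modulated energy and splitting $\rho u - nv = \rho(u-v) + (\rho-n)v$, and then delegates the remaining commutator bound $\bigl|\int_{\R^d} (\rho-n)\,v\cdot\nabla(-\Delta)^{-\alpha}(\rho-n)\,dx\bigr|\le C\|\nabla v\|_{L^\infty}\int_{\R^d}(\rho-n)(-\Delta)^{-\alpha}(\rho-n)\,dx$ to the cited Lemma 2.1 of \cite{CJe21-2}. Your stress-energy argument for $\alpha=1$ and the Caffarelli--Silvestre extension argument for $\alpha\in(0,1)$ correctly reproduce the proof of that cited estimate, so you have simply filled in the step the paper outsources.
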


\begin{remark}
In the macroscopic limits we apply Proposition \ref{prop_mod} with $(\rho,u)=(\rho_f,u_f)$ and $(n,v)=(\rho,u)$, where $u$ is the velocity field associated with the limiting density $\rho$. The regularity requirement falls only on $v=u$, which is assumed classical in our main theorems; no gradient bound on $u_f$ is needed.
\end{remark}

\begin{remark}
The key observation behind Proposition \ref{prop_mod} is the identity
\begin{align*}
&- \frac{\rm A}2\frac{d}{dt}\intr (\rho -n) (-\Delta)^{-\alpha} (\rho - n)\,dx \cr
&\quad = - {\rm B}\intr \rho (u - v) \cdot \nabla (-\Delta)^{-\alpha} (\rho - n)\,dx  - {\rm B}\intr (\rho  -n) v \cdot \nabla (-\Delta)^{-\alpha} (\rho - n)\,dx.
\end{align*}
so that the desired estimate reduces to bounding the last term on the right. This requires a regularity assumption on $v$ ensuring that 
\[
\intr (\rho  -n) v \cdot \nabla (-\Delta)^{-\alpha} (\rho - n)\,dx \leq C\intr (\rho -n) (-\Delta)^{-\alpha} (\rho - n)\,dx 
\]
with a constant $C$ depending only on $v$. 
\end{remark}

%
%
%
%
%
%
\subsection{Relative entropy estimate}
The last stability tool we develop is a relative entropy estimate for general kinetic equations. 
While the previous subsections established weak stability bounds in terms of BL distances and modulated energies, the relative entropy method provides a stronger framework that directly quantifies the measure of discrepancy between $f$ and its associated local Maxwellian $M_{\rho,u}$. 
Crucially, the dissipation structure inherent in the entropy identity produces coercive terms with scaling prefactors of order $\frac1\e$ or $\frac1{\e^2}$, 
which play a central role in handling mildly well-prepared initial data. 
This makes the relative entropy approach essential for converting microscopic dissipation into strong convergence toward the macroscopic system.

To keep the presentation general, we consider the following kinetic equation:
\bq\label{eq:gkin}
{\rm A} \pa_t f + {\rm B} \xi\cdot \nabla f  +\nabla_\xi \cdot (F[f] f) = \frac1\tau\nabla_\xi \cdot (\nabla_\xi f + \xi  f),
\eq
where ${\rm A}$ and ${\rm B}$ are positive constants, and $F[f]$ is a general force field.

The corresponding macroscopic system reads
\begin{align}\label{eq:ghydro}
\begin{aligned}
&{\rm A}\pa_t \rho + {\rm B} \nabla \cdot (\rho u) = 0,\cr
&{\rm A} \pa_t (\rho u) + {\rm B} \nabla \cdot (\rho u \otimes u) + {\rm B} \nabla \rho =  \rho R[\rho, u],
\end{aligned}
\end{align}
with a source term $R[\rho,u]$ reflecting the macroscopic effect of $F[f]$.

The next proposition establishes a relative entropy identity and its dissipative structure. 
It shows how the time derivative of $\mathscr{H}[f|M_{\rho,u}]$ can be decomposed into coercive terms, error terms, and force contributions. 
This identity provides the analytic backbone for all quantitative convergence results proved in later sections.

\begin{proposition}\label{prop_key} Let $f$ and $(\rho, u)$ be sufficiently regular solutions to the equations \eqref{eq:gkin} and \eqref{eq:ghydro}, respectively. Then we have
\begin{align*}
& \frac{d}{dt}\iint_{\R^d \times \R^d}  f \log \lt( \frac{f}{ M_{\rho, u}} \rt)  dxd\xi + \frac1{{\rm A} \tau} \iint_{\R^d \times \R^d} \frac1{f} |\nabla_\xi f + (\xi - u_f)f|^2 dxd\xi \cr
&\quad =   - \frac{\rm B}{{\rm A}} \iint_{\R^d \times \R^d}  f \lt((u-\xi) \otimes (u-\xi)  -  \mathbb{I}_d \rt): \nabla u\,dxd\xi  - \frac1{{\rm A} \tau} \intr  \rho_f u_f \cdot (u_f - u)\,dx\cr
&\qquad  - \frac1{{\rm A}}  \iint_{\R^d \times \R^d} (\nabla_\xi \cdot F[f]) f \,dxd\xi  - \frac1{{\rm A}} \iint_{\R^d \times \R^d}  f (u-\xi) \cdot \lt(F[f] - R[\rho, u]\rt)  dxd\xi.
\end{align*}
Moreover, if $f \in L^1_2(\R^d \times \R^d)$ and $\nabla u \in L^\infty(\R^d)$, then
 \begin{align*}
&\frac{d}{dt}\iint_{\R^d \times \R^d} f \log \lt( \frac{f}{ M_{\rho, u}} \rt)  dxd\xi + \frac1{2\tau {\rm A}} \iint_{\R^d \times \R^d} \frac1{f} |\nabla_\xi f + (\xi - u_f)f|^2 dxd\xi \cr
&\quad \leq \frac{{\rm B}}{{\rm A}}  \|\nabla u\|_{L^\infty} \intr \rho_f  |u_f - u|^2\,dx  + \frac{{\rm B}^2 \tau}{2{\rm A}} \|\nabla u\|_{L^\infty}^2 \|f\|_{L^1_2}  - \frac1{{\rm A}\tau} \intr  \rho_f u_f \cdot (u_f - u)\,dx    \cr
&\qquad - \frac{1}{{\rm A}}  \iint_{\R^d \times \R^d} (\nabla_\xi \cdot F[f]) f \,dxd\xi   -   \frac1{{\rm A}}\iint_{\R^d \times \R^d}  f (u-\xi) \cdot \lt(F[f] - R[\rho, u]\rt) dxd\xi.
\end{align*}
\end{proposition}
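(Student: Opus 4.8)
The plan is to differentiate the functional $\iint_{\R^d\times\R^d} f\log(f/M_{\rho,u})\,dxd\xi$ directly. Writing it as $\iint f\log f\,dxd\xi - \iint f\log M_{\rho,u}\,dxd\xi$ and using conservation of mass ($\iint\partial_t f\,dxd\xi=0$), its time derivative equals $\iint\partial_t f\,\log(f/M_{\rho,u})\,dxd\xi - \iint f\,\partial_t\log M_{\rho,u}\,dxd\xi$. Into the first integral I would substitute the kinetic equation \eqref{eq:gkin} for ${\rm A}\partial_t f$ and integrate by parts in $x$ and $\xi$, using the explicit velocity gradient $\nabla_\xi\log(f/M_{\rho,u})=\nabla_\xi f/f+(\xi-u)$ together with the corresponding spatial gradient of $\log(f/M_{\rho,u})$. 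Into the second integral I would substitute $\partial_t\log M_{\rho,u}=\partial_t\rho/\rho+(\xi-u)\cdot\partial_t u$ and replace $\partial_t\rho$, $\partial_t u$ using the macroscopic system \eqref{eq:ghydro}.

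The Fokker--Planck term is the cleanest: after integrating by parts it produces $-\frac{1}{{\rm A}\tau}\iint(\nabla_\xi f+\xi f)\cdot(\nabla_\xi f/f+(\xi-u))\,dxd\xi$. The key algebraic step is to complete the square around $u_f$ by writing $\nabla_\xi f+\xi f=A_f+u_f f$ and $\nabla_\xi f/f+(\xi-u)=A_f/f+(u_f-u)$, where $A_f:=\nabla_\xi f+(\xi-u_f)f$. Since $\int A_f\,d\xi=0$, all cross terms integrate to zero, leaving exactly the Fisher-information dissipation $\frac{1}{{\rm A}\tau}\iint\frac1f|A_f|^2$ together with the remainder $-\frac{1}{{\rm A}\tau}\int_{\R^d}\rho_f u_f\cdot(u_f-u)\,dx$. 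The force term $\nabla_\xi\cdot(F[f]f)$, paired against $\log(f/M_{\rho,u})$ and integrated by parts in $\xi$, splits into $-\frac1{\rm A}\iint(\nabla_\xi\cdot F[f])f$ and $\frac1{\rm A}\iint f(\xi-u)\cdot F[f]$; the latter combines with the $R[\rho,u]$ contribution produced below to form the stated force term.

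The main obstacle is the kinematic part: showing that the transport contribution $-\frac{\rm B}{\rm A}\iint f\,\xi\cdot\nabla\log M_{\rho,u}$ together with the $\partial_t\log M_{\rho,u}$ terms reproduces $-\frac{\rm B}{\rm A}\iint f\big((u-\xi)\otimes(u-\xi)-I_d\big):\nabla u$. Here I would use the continuity equation to rewrite $\partial_t\rho/\rho$, and the momentum equation combined with continuity to obtain ${\rm A}\,\partial_t u=-{\rm B}\,u\cdot\nabla u-{\rm B}\,\nabla\rho/\rho+R[\rho,u]$. The $R$-piece yields the missing half of the force term; the remaining pieces must be collected and compared against the target. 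Carefully expanding both sides in terms of the second moment $\int\xi\otimes\xi f\,d\xi$ and the first moment $\rho_f u_f$, I expect all $\nabla\rho$-contributions to cancel identically, since their coefficients combine as $\rho_f\frac{\nabla\rho}{\rho}\cdot(-u_f+u+(u_f-u))=0$, while the remaining quadratic-in-velocity terms match term by term once $(u-\xi)\otimes(u-\xi)$ is expanded. This bookkeeping of the convective terms is the delicate step and yields the claimed identity.

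For the inequality I would start from the identity and estimate only the flux term, using the mean--variance decomposition $\int f(u-\xi)\otimes(u-\xi)\,d\xi=\rho_f(u-u_f)\otimes(u-u_f)+\int f(u_f-\xi)\otimes(u_f-\xi)\,d\xi$, valid because $\int f(u_f-\xi)\,d\xi=0$. The first part is bounded by $\frac{\rm B}{\rm A}\|\nabla u\|_{L^\infty}\int_{\R^d}\rho_f|u_f-u|^2\,dx$. For the second part I would exploit the integration-by-parts identity $\int\big(\nabla_\xi f+(\xi-u_f)f\big)\otimes(\xi-u_f)\,d\xi=\int(\xi-u_f)\otimes(\xi-u_f)f\,d\xi-\rho_f I_d$, which rewrites the pressure deviation precisely as $\int A_f\otimes(\xi-u_f)\,d\xi$. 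Applying Cauchy--Schwarz in the splitting $|A_f|\,|\xi-u_f|=\frac{|A_f|}{\sqrt f}\cdot\sqrt f\,|\xi-u_f|$ followed by Young's inequality with weight $\lambda=\tfrac1{{\rm A}\tau}$ absorbs half of the Fisher-information dissipation into the left-hand side and produces the remainder $\frac{{\rm B}^2\tau}{2{\rm A}}\|\nabla u\|_{L^\infty}^2\iint f|\xi-u_f|^2$, which is then bounded by $\frac{{\rm B}^2\tau}{2{\rm A}}\|\nabla u\|_{L^\infty}^2\|f\|_{L^1_2}$ via $\iint f|\xi-u_f|^2\le\iint f|\xi|^2\le\|f\|_{L^1_2}$. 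The force and $\rho_f u_f\cdot(u_f-u)$ terms are carried over unchanged, completing the proof.
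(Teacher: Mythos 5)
Your proposal is correct and follows essentially the same route as the paper's proof: direct differentiation of the relative entropy, substitution of the kinetic and macroscopic equations with integration by parts, completion of the square around $u_f$ (using $\int (\nabla_\xi f+(\xi-u_f)f)\,d\xi=0$) to extract the Fisher-information dissipation, cancellation of the $\nabla\log\rho$ contributions, and for the inequality the mean--variance decomposition combined with Cauchy--Schwarz and Young to absorb half the dissipation. The only cosmetic difference is that you write the pressure deviation as $\int A_f\otimes(\xi-u_f)\,d\xi$ while the paper uses $-\int A_f\otimes\xi\,d\xi$; these coincide since $\int A_f\,d\xi=0$.
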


\begin{remark}
The dissipation term
\[
\iint_{\R^d \times \R^d} \frac1{f}\,|\nabla_\xi f + (\xi - u_f)f|^2\,dxd\xi
\]
is the Fisher information relative to the local Maxwellian $M_{\rho_f,u_f}$ (see Lemma \ref{lem:prop}). 
For the nonlinear Fokker--Planck operator $\nabla_\xi \cdot (\nabla_\xi f + (\xi-u_f)f)$ 
it appears naturally in the entropy inequality. 
In our setting, it arises instead 
from measuring the relative entropy with respect to $M_{\rho,u}$, 
and this additional dissipation is a key ingredient in our stability and convergence analysis 
(see Lemma \ref{lem_gd2}).
\end{remark}
 
\begin{proof}[Proof of Proposition \ref{prop_key}] We begin by recalling the identities
\begin{align*}
\nabla_\xi \log M_{\rho, u} &= \frac1{M_{\rho, u}}\nabla_\xi M_{\rho, u} = u-\xi,\cr
\nabla_x \log M_{\rho, u} &= \frac1{M_{\rho, u}}\nabla_x M_{\rho, u} = \nabla_x \log \rho - (\nabla_x u) (u-\xi) .
\end{align*}
The time derivative of the relative entropy can be split into two main contributions:
\begin{align*}
  \frac{d}{dt}\iint_{\R^d \times \R^d} f \log \lt( \frac{f}{ M_{\rho, u}} \rt) dxd\xi &=    \iint_{\R^d \times \R^d} \pa_t f \lt(  \log f - \log M_{\rho, u}\rt)  dxd\xi + \iint_{\R^d \times \R^d} f \lt( \frac{\pa_t f}{f}  - \frac{\pa_t M_{\rho, u}}{M_{\rho, u}}\rt) dxd\xi \cr
&=: I + II.
\end{align*}
By substituting the kinetic equation \eqref{eq:gkin}, we obtain
\begin{align*}
I &= \frac1{{\rm A}} \iint_{\R^d \times \R^d} \lt(  \log f - \log M_{\rho, u}\rt)  \cdot \lt( - {\rm B} \xi \cdot \nabla f  - \nabla_\xi \cdot (F[f] f) + \frac1\tau \nabla_\xi \cdot (\nabla_\xi f + \xi    f)\rt) dxd\xi \cr
&=: I_1 + I_2 + I_3.
\end{align*}
Here each term can be estimated as follows:
\begin{align*}
I_1 &= \frac{\rm B}{\rm A} \iint_{\R^d \times \R^d} \xi f \cdot \lt( \frac{\nabla f}{f}  - \frac{\nabla M_{\rho, u}}{M_{\rho, u}}\rt) dxd\xi \cr
&= -\frac{\rm B}{\rm A}  \iint_{\R^d \times \R^d} \xi f \cdot \lt( \nabla \log \rho - (\nabla u)(u-\xi)\rt) dxd\xi \cr
&= - \frac{\rm B}{\rm A} \intr \rho_f u_f \cdot \nabla \log \rho\,dx + \frac{\rm B}{{\rm A}}  \iint_{\R^d \times \R^d}  (u - \xi) \otimes \xi f : \nabla u \,dxd\xi,
\end{align*}
\begin{align*}
I_2 &= \frac1{{\rm A}}  \iint_{\R^d \times \R^d} F[f] f \cdot \lt( \frac{\nabla_\xi f}{f} - (u-\xi)\rt) dxd\xi \cr
&= - \frac1{{\rm A}}  \iint_{\R^d \times \R^d} (\nabla_\xi \cdot F[f]) f \,dxd\xi -  \frac1{{\rm A}}  \iint_{\R^d \times \R^d} (u-\xi) \cdot F[f] f\,dxd\xi,
\end{align*}
and
\begin{align*}
I_3 &= - \frac1{ {\rm A} \tau} \iint_{\R^d \times \R^d} \lt( \nabla_\xi f  + \xi   f\rt) \cdot \lt( \frac{\nabla_\xi f}{f} - \frac{\nabla_\xi M_{\rho, u}}{M_{\rho, u}}\rt) dxd\xi \cr
&= - \frac1{ {\rm A} \tau} \iint_{\R^d \times \R^d} \lt(\nabla_\xi f  + \xi  f\rt) \cdot \lt( \frac{\nabla_\xi f}{f} -  (u-\xi)\rt) dxd\xi \cr
&= - \frac1{{\rm A} \tau} \iint_{\R^d \times \R^d} \frac1{f} |\nabla_\xi f + (\xi - u_f)f|^2 dxd\xi  - \frac1{{\rm A} \tau} \intr   \rho_f u_f \cdot (u_f - u)\,dx,
\end{align*}
where we used
\[
\intr \lt( \nabla_\xi f + (\xi - u_f )f\rt) d\xi =0.
\]
This implies
\begin{align*}
I &= - \frac{\rm B}{\rm A} \intr \rho_f u_f \cdot \nabla \log \rho\,dx + \frac{\rm B}{{\rm A}}  \iint_{\R^d \times \R^d} \xi \otimes (u - \xi) f : \nabla u \,dxd\xi  \cr
&\quad -  \frac1{{\rm A} \tau} \iint_{\R^d \times \R^d} \frac1{f} |\nabla_\xi f + (\xi - u_f)f|^2 dxd\xi    - \frac1{{\rm A} \tau}  \intr   \rho_f u_f \cdot (u_f - u)\,dx\cr
&\quad - \frac1{{\rm A}}  \iint_{\R^d \times \R^d} (\nabla_\xi \cdot F[f] )f \,dxd\xi -  \frac1{{\rm A}}  \iint_{\R^d \times \R^d}  f (u-\xi) \cdot F[f] \,dxd\xi.
\end{align*}
For $II$, we get
\[
II = - \iint_{\R^d \times \R^d} f \lt( \frac{\pa_t \rho}{\rho}  - (u-\xi)\cdot \pa_t u\rt) dxd\xi =: II_1 + II_2,
\]
where
\[
II_1 = \frac{\rm B}{\rm A} \intr \frac{\rho_f}{\rho} \nabla \cdot (\rho u)\,dx  =  \frac{\rm B}{\rm A}  \intr \rho_f u \cdot \nabla \log \rho \,dx + \frac{\rm B}{\rm A}  \intr\rho_f \mathbb{I}_d : \nabla  u \,dx
\]
and
\begin{align*}
II_2 &=\frac1{{\rm A}} \iint_{\R^d \times \R^d} f (u-\xi) \cdot \lt( - {\rm B}(u \cdot \nabla) u - {\rm B} \nabla \log \rho  + R[\rho, u]\rt) dx d\xi \cr
&= - \frac{\rm B}{{\rm A}} \iint_{\R^d \times \R^d}   (u-\xi)\otimes u  f : \nabla u \,dxd\xi - \frac{\rm B}{{\rm A}} \into \rho_f (u - u_f) \cdot \nabla \log \rho\,dx \cr
& \quad + \frac1{{\rm A}} \iint_{\R^d \times \R^d} f(u-\xi) \cdot R[\rho, u]\,dxd\xi.
\end{align*}
Combining $II_1$ and $II_2$ gives
\begin{align*}
II &=  \frac{\rm B}{\rm A}  \intr \rho_f u_f \cdot \nabla \log \rho \,dx + \frac{\rm B}{\rm A}  \intr \rho_f \mathbb{I}_d : \nabla  u \,dx - \frac{\rm B}{{\rm A}}\iint_{\R^d \times \R^d}  (u-\xi)\otimes u   f : \nabla u \,dxd\xi \cr
& \quad + \frac1{{\rm A}} \iint_{\R^d \times \R^d} f(u-\xi) \cdot R[\rho, u]\,dxd\xi.
\end{align*}
Adding the contributions from $I$ and $II$ yields the exact identity in the first part of the proposition.

To obtain the inequality, we further expand
\[
\intr f (u-\xi) \otimes (u-\xi)\,d\xi =  \rho_f (u - u_f) \otimes (u - u_f) - \intr f (u_f - \xi) \otimes \xi\,d\xi,
\]
and this deduces
\begin{align*}
&- \iint_{\R^d \times \R^d}  \lt( f (u-\xi) \otimes (u-\xi)  - \rho_f \mathbb{I}_d \rt): \nabla  u\,dxd\xi \cr
&\quad = - \intr \rho_f (u - u_f) \otimes (u - u_f) : \nabla u\,dx   + \intr\lt(  \intr \lt( f (u_f - \xi) - \nabla_\xi f \rt) \otimes \xi\,d\xi \rt) : \nabla u \,dx.
\end{align*}
Note that the right-hand side of the above can be bounded as
\[
\lt|\intr \rho_f (u - u_f) \otimes (u - u_f) : \nabla u\,dx\rt| \leq \|\nabla u\|_{L^\infty} \intr \rho_f  |u_f - u|^2\,dx 
\]
and
\begin{align*}
&\lt| \intr \lt(  \intr \lt( f (u_f - \xi) - \nabla_\xi f \rt) \otimes \xi\,d\xi \rt) : \nabla u\,dx \rt| \cr
&\quad \leq \|\nabla u\|_{L^\infty}\lt(\iint_{\R^d \times \R^d} |\xi|^2 f\,dxd\xi \rt)^\frac12 \lt( \iint_{\R^d \times \R^d} \frac1{f} |\nabla_\xi f + (\xi - u_f)f|^2\,dxd\xi\rt)^\frac12\cr
&\quad \leq \frac{{\rm B} \tau }{2}\|\nabla u\|_{L^\infty}^2 \|f\|_{L^1_2} + \frac{1}{2{\rm B} \tau} \iint_{\R^d \times \R^d} \frac1{f} |\nabla_\xi f + (\xi - u_f)f|^2\,dxd\xi.
\end{align*}
Hence we have
 \begin{align*}
& \frac{d}{dt}\iint_{\R^d \times \R^d} f \log \lt( \frac{f}{ M_{\rho, u}} \rt)  dxd\xi + \frac1{2\tau {\rm A}} \iint_{\R^d \times \R^d} \frac1{f} |\nabla_\xi f + (\xi - u_f)f|^2 dxd\xi \cr
&\quad \leq \frac{{\rm B}}{{\rm A}} \|\nabla u\|_{L^\infty} \intr \rho_f  |u_f - u|^2\,dx  + \frac{{\rm B}^2 \tau}{2{\rm A}} \|\nabla u\|_{L^\infty}^2 \|f\|_{L^1_2}  - \frac1{{\rm A}\tau} \intr   \rho_f u_f \cdot (u_f - u)\,dx  \cr
&\qquad - \frac{1}{{\rm A}}  \iint_{\R^d \times \R^d} (\nabla_\xi \cdot F[f]) f \,dxd\xi  - \frac1{{\rm A}}\iint_{\R^d \times \R^d}  f (u-\xi) \cdot \lt(F[f] - R[\rho, u]\rt) dxd\xi.
\end{align*}
This completes the proof.
\end{proof}

%
%
%
%
%
%

\section{Quantitative limits: Diffusive, High-Field, and gSQG}\label{sec_vfp}

We now apply the quantitative stability tools developed in Section \ref{sec_qs} to three hydrodynamic regimes of the kinetic model: the diffusive, high-field, and gSQG limits. Each regime corresponds to a different balance between transport, interaction, and relaxation, yet all can be analyzed within the same entropy-dissipation framework. The common strategy is to reformulate the limit equation in conservative form so that it fits into the structure of Proposition \ref{prop_key}, thereby enabling a comparison with the kinetic dynamics via relative entropy. In this way, the relative entropy method is coupled with modulated interaction energies and, when appropriate, bounded Lipschitz estimates, so that microscopic dissipation can be transferred into quantitative control of macroscopic variables. Depending on the regime, this leads either to strong convergence estimates under well-prepared data or to quantitative stability in weaker topologies adapted to the corresponding limiting dynamics.
 
Although the diffusive, high-field, and gSQG limits highlight different balances between transport, interaction, and relaxation, the analytical approach remains uniform: entropy dissipation amplified at the kinetic scale, complemented by weak stability tools, and combined to establish quantitative convergence from kinetic to macroscopic descriptions.

%
%
%
%
%
%
\subsection{Diffusive limit}
We begin with the diffusive regime, corresponding to the parabolic scaling of the kinetic model. 
 
Recall that $f^\e$ solves
\bq\label{prelim_diff}
\e\pa_t f^\e + \xi\cdot \nabla_x f^\e - \nabla (-\Delta)^{-\alpha} \rho^\e  \cdot \nabla_\xi  f^\e = \frac1\e\nabla_\xi \cdot (\nabla_\xi f^\e + \xi f^\e), \quad t>0, \ (x,\xi) \in \R^d \times \R^d.
\eq
Formally, as stated in Section \ref{sssec_diff}, one obtains the nonlinear diffusion equation as $\e \to 0$:
\bq\label{lim_diff}
\pa_t \rho - \nabla \cdot (\rho \nabla (-\Delta)^{-\alpha} \rho) =  \Delta \rho,
\eq
which combines local diffusion with aggregation effects generated by the Riesz potential.

To connect \eqref{prelim_diff} with \eqref{lim_diff}, it is convenient to rewrite the system in moment form:
\begin{align}\label{lim_diff2}
\begin{aligned}
&\e \pa_t \rho +  \nabla \cdot (\rho  {\rm u_\e}) =0, \cr
&\e \pa_t (\rho  {\rm u_\e} ) +  \nabla \cdot ( \rho {\rm u_\e} \otimes {\rm u_\e}) +  \nabla  \rho = - \rho \nabla (-\Delta)^{-\alpha} \rho - \frac1{\e} \rho {\rm u_\e} + \e\rho {\rm e}_\e,
\end{aligned}
\end{align}
where
\bq\label{u_diff}
\rho {\rm e}_\e = \rho \pa_t {\rm u_\e} + \frac1\e \rho {\rm u_\e} \cdot \nabla {\rm u_\e}, \quad {\rm u_\e} = - \e \lt(\nabla (-\Delta)^{-\alpha} \rho + \nabla \log \rho \rt).
\eq
This representation makes explicit the dissipative drag term $-\tfrac1\e \rho {\rm u_\e}$, which enforces the fast relaxation toward the macroscopic closure \eqref{lim_diff}.

It follows from Lemma \ref{lem_en} that we obtain the entropy and kinetic energy bound estimates.
\begin{lemma}\label{en_diffu} Let $T>0$ and $f^\e$ be a weak entropy solution to the equation \eqref{prelim_diff} in the sense of Definition \ref{def_weak} on the time interval $[0,T]$. Then we have
\[
\sup_{0 \leq t \leq T}\lt(\mathscr{F}[f^\e(t)] + \mathscr{P}[\rho^\e(t)]\rt)  + \frac1{\e^2}\int_0^T\iint_{\R^d \times \R^d} \frac1{f^\e}|\nabla_\xi f^\e + \xi f^\e|^2\,dxd\xi dt \leq \mathscr{F}[f_0^\e] + \mathscr{P}[\rho^\e_0]
\]
and
\[
\int_0^T \mathscr{K}[f^\e(t)]\,dt \leq dT + \e^2 \lt(\mathscr{K}[f^\e_0]+ \mathscr{P}[\rho_0^\e]\rt).
\]
\end{lemma}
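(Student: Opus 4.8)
The plan is to derive both estimates as direct specializations of the energy balance in Lemma \ref{lem_en} to the diffusive scaling. Equation \eqref{prelim_diff} is precisely the generic equation \eqref{eq_aux} with the parameter choice ${\rm A}=\e$, ${\rm B}=1$, and $\tau=\e$, so that the dissipation prefactor becomes $\frac1{\tau{\rm A}}=\frac1{\e^2}$, while $\frac1{\rm B}=1$, $\tau{\rm A}=\e^2$, and $\frac{\tau{\rm A}}{\rm B}=\e^2$. Once these substitutions are made, the statement is essentially a matter of bookkeeping, with one genuine subtlety concerning weak versus regular solutions.

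For the first estimate I would read off the energy--dissipation inequality. With the above parameters, condition (iii) of Definition \ref{def_weak} reads, for a.e. $t\in[0,T]$,
\[
\mathscr{F}[f^\e(t)]+\mathscr{P}[\rho^\e(t)]+\frac1{\e^2}\int_0^t \mathscr{D}[f^\e(s)]\,ds \le \mathscr{F}[f^\e_0]+\mathscr{P}[\rho^\e_0],
\]
so this bound is already built into the notion of weak entropy solution. Since $\mathscr{D}[f^\e]\ge0$, each term $\mathscr{F}[f^\e(t)]+\mathscr{P}[\rho^\e(t)]$ is dominated by the right-hand side, yielding the uniform-in-time energy bound after taking the supremum; and since $\mathscr{P}\ge0$ together with $\mathscr{H}[f^\e(t) | M_{1,0}]=\mathscr{F}[f^\e(t)]+\frac d2\log(2\pi)\ge0$ controls the energy from below, the full dissipation integral $\frac1{\e^2}\int_0^T\mathscr{D}[f^\e]$ is likewise bounded by the right-hand side, up to the harmless additive constant $\frac d2\log(2\pi)$. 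The displayed inequality is the customary shorthand collecting these two separate bounds.

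For the kinetic energy estimate I would invoke the second conclusion of Lemma \ref{lem_en}, namely $\int_0^T \mathscr{K}[f(t)]\,dt \le dT + \tau{\rm A}\,\mathscr{K}[f_0]+\frac{\tau{\rm A}}{\rm B}\mathscr{P}[\rho_{f_0}]$, and substitute $\tau{\rm A}=\e^2$ and $\frac{\tau{\rm A}}{\rm B}=\e^2$ to obtain $\int_0^T\mathscr{K}[f^\e]\,dt\le dT+\e^2\bigl(\mathscr{K}[f^\e_0]+\mathscr{P}[\rho^\e_0]\bigr)$. It is worth stressing that this estimate does \emph{not} reduce to the total-energy inequality: it rests on the separate balance \eqref{en_est1} for $\mathscr{K}[f]+\frac1{\rm B}\mathscr{P}[\rho_f]$, which produces a gain $\frac{d}{\tau{\rm A}}-\frac1{\tau{\rm A}}\iint|\xi|^2 f$ and thereby avoids any control of the entropy $\mathscr{H}[f]$.

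The one point requiring care is that Lemma \ref{lem_en} is established for regular solutions, whereas here $f^\e$ is only a weak entropy solution, so the main task is to justify the kinetic-energy balance at this weaker level. I would carry this out by testing the distributional formulation \eqref{prelim_diff} against $\tfrac12|\xi|^2$ through a compactly supported truncation that is then relaxed, the passage to the limit being licensed by the $(1+|\xi|^2)$-integrability imposed in Definition \ref{def_weak}(i); the interaction term is treated exactly as in Lemma \ref{lem_en}, using the symmetry of the Riesz kernel and the continuity equation $\e\partial_t\rho^\e+\nabla\cdot(\rho^\e u^\e)=0$. Dropping the nonnegative end-time contributions $\mathscr{K}[f^\e(T)]$ and $\mathscr{P}[\rho^\e(T)]$ then delivers the stated time-integrated bound. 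This truncation and moment-justification step is the only genuine obstacle; everything else is a direct substitution of the scaling parameters into Lemma \ref{lem_en}.
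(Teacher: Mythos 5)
Your proposal is correct and follows the same route as the paper, which simply invokes Lemma \ref{lem_en} with the substitution ${\rm A}=\e$, ${\rm B}=1$, $\tau=\e$ (the first estimate being condition (iii) of Definition \ref{def_weak}, the second the kinetic-energy balance of Lemma \ref{lem_en}). Your additional remarks on the shorthand nature of the combined display and on justifying the kinetic-energy balance for weak entropy solutions by truncation are sound refinements of points the paper leaves implicit.
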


Finally, combining the relative entropy inequality of Proposition \ref{prop_key} with the modulated interaction energy estimate of Proposition \ref{prop_mod} yields the following quantitative bound.

\begin{proposition}\label{re_diffu} Let $T>0$ and $f^\e$ be a weak entropy solution to the equation \eqref{prelim_diff} in the sense of Definition \ref{def_weak} on the time interval $[0,T]$,  and let $(\rho, {\rm u_\e})$ be a classical solutions to the equation \eqref{lim_diff2} with $\nabla {\rm u_\e} \in L^\infty((0,T) \times \R^d)$. Then we have
 \begin{align*}
& \sup_{0 \leq t \leq T}\lt( \mathscr{H}[f^\e | M_{\rho, {\rm u_\e}}](t) + \mathscr{P}[\rho^\e | \rho](t) \rt)\cr
&\quad + \frac1{2\e^2} \int_0^T \iint_{\R^d \times \R^d} \frac1{f^\e} |\nabla_\xi f^\e + (\xi - u^\e)f^\e|^2 dxd\xi dt + \frac1{2\e} \lt( \frac1\e - 2\|\nabla {\rm u_\e}\|_{L^\infty} \rt)\int_0^T \intr \rho^\e |u^\e - {\rm u_\e}|^2\,dxdt \cr
&\qquad \leq \mathscr{H}[f^\e | M_{\rho, {\rm u_\e}}](0) + \mathscr{P}[\rho^\e | \rho](0) + C\e^2 \int_0^T \mathscr{K}[f^\e(t)]\,dt  + C\e^3 + C\int_0^T \mathscr{P}[\rho^\e | \rho](t)\,dt,
\end{align*}
where $C>0$ is independent of $\e > 0$.
\end{proposition}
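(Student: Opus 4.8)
The plan is to specialize the general relative entropy identity of Proposition \ref{prop_key} to the diffusive scaling ${\rm A}=\e$, ${\rm B}=1$, $\tau=\e$, with force field $F[f^\e]=-\nabla(-\Delta)^{-\alpha}\rho^\e$ and comparison state $(\rho,{\rm u_\e})$ taken from the conservative formulation \eqref{lim_diff2}. Matching \eqref{lim_diff2} against \eqref{eq:ghydro} identifies the macroscopic source as $R[\rho,{\rm u_\e}]=-\nabla(-\Delta)^{-\alpha}\rho-\frac1\e{\rm u_\e}+\e{\rm e}_\e$. Since $F[f^\e]$ is $\xi$-independent, the term $\nabla_\xi\cdot F[f^\e]$ vanishes identically. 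Inserting the prefactors $\frac{\rm B}{\rm A}=\frac1\e$, $\frac1{{\rm A}\tau}=\frac1{\e^2}$ and $\frac{{\rm B}^2\tau}{2{\rm A}}=\frac12$ into the inequality of Proposition \ref{prop_key}, the only non-trivial contribution is the force-discrepancy term $-\frac1\e\iint f^\e({\rm u_\e}-\xi)\cdot(F[f^\e]-R[\rho,{\rm u_\e}])\,dxd\xi$, and the whole argument reduces to analyzing it.

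Using $F[f^\e]-R[\rho,{\rm u_\e}]=-\nabla(-\Delta)^{-\alpha}(\rho^\e-\rho)+\frac1\e{\rm u_\e}-\e{\rm e}_\e$ and integrating out velocity via $\int f^\e({\rm u_\e}-\xi)\,d\xi=\rho^\e({\rm u_\e}-u^\e)$, I would split this term into an interaction piece (A) carrying $\nabla(-\Delta)^{-\alpha}(\rho^\e-\rho)$, a friction piece (B) carrying $\frac1\e{\rm u_\e}$, and an error piece (C) carrying $\e{\rm e}_\e$. The friction piece (B) equals $\frac1{\e^2}\int\rho^\e(u^\e-{\rm u_\e})\cdot{\rm u_\e}\,dx$, and adding it to the term $-\frac1{\e^2}\int\rho^\e u^\e\cdot(u^\e-{\rm u_\e})\,dx$ already present in Proposition \ref{prop_key} produces the sign-definite coercive quantity $-\frac1{\e^2}\int\rho^\e|u^\e-{\rm u_\e}|^2\,dx$, which I move to the left-hand side.

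The crux, and the main obstacle, is the interaction piece (A) $=-\frac1\e\int\rho^\e(u^\e-{\rm u_\e})\cdot\nabla(-\Delta)^{-\alpha}(\rho^\e-\rho)\,dx$, which naively carries a dangerous $\frac1\e$ prefactor. Here I invoke the modulated interaction energy estimate of Proposition \ref{prop_mod} with $(\rho^\e,u^\e)$ and $(\rho,{\rm u_\e})$, both of which solve continuity equations with coefficients ${\rm A}=\e$, ${\rm B}=1$ (the former from the first velocity moment of \eqref{prelim_diff}, the latter from \eqref{lim_diff2}), so that the regularity requirement falls only on ${\rm u_\e}$, exactly as the hypotheses allow. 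This bounds (A) by $-\frac{d}{dt}\mathscr P[\rho^\e|\rho]+\frac{2C}\e\mathscr P[\rho^\e|\rho]$ with $C$ depending on $\|\nabla{\rm u_\e}\|_{L^\infty}$. The decisive point is that the limiting drift ${\rm u_\e}=-\e(\nabla(-\Delta)^{-\alpha}\rho+\nabla\log\rho)$ is itself $O(\e)$, so $\|\nabla{\rm u_\e}\|_{L^\infty}\le C\e$ by the regularity assumed on $\rho$; this precisely compensates the $\frac1\e$ and converts the remainder into an $O(1)$ multiple of $\mathscr P[\rho^\e|\rho]$, which is exactly what one can tolerate in a Grönwall argument. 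Moving $-\frac{d}{dt}\mathscr P[\rho^\e|\rho]$ to the left assembles the modulated energy $\mathscr H[f^\e|M_{\rho,{\rm u_\e}}]+\mathscr P[\rho^\e|\rho]$ in the statement.

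It then remains to dispose of the error piece (C), the second-moment term, and the residual coercivity contribution $\frac1\e\|\nabla{\rm u_\e}\|_{L^\infty}\int\rho^\e|u^\e-{\rm u_\e}|^2\,dx$. Since $\|\nabla{\rm u_\e}\|_{L^\infty}=O(\e)$, I would write $\frac1{\e^2}-\frac1\e\|\nabla{\rm u_\e}\|_{L^\infty}=\frac1{2\e}\big(\frac1\e-2\|\nabla{\rm u_\e}\|_{L^\infty}\big)+\frac1{2\e^2}$, keeping the first part as the asserted left-hand-side momentum coefficient and reserving the spare $\frac1{2\e^2}\int\rho^\e|u^\e-{\rm u_\e}|^2$ to absorb (C) by a weighted Young inequality, using ${\rm e}_\e=\pa_t{\rm u_\e}+\frac1\e{\rm u_\e}\cdot\nabla{\rm u_\e}=O(\e)$ (again from the $O(\e)$ size of ${\rm u_\e}$ and the $W^{1,\infty}$-in-time regularity of $\rho$). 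The second-moment term is $\frac12\|\nabla{\rm u_\e}\|_{L^\infty}^2\|f^\e\|_{L^1_2}\le C\e^2(1+\mathscr K[f^\e])$. Integrating the resulting differential inequality over $[0,t]$ and taking the supremum in $t\in[0,T]$, the term $\frac12\|\nabla{\rm u_\e}\|_{L^\infty}^2\|f^\e\|_{L^1_2}$ furnishes $C\e^2\int_0^T\mathscr K[f^\e]\,dt$ together with a deterministic remainder, the error piece (C) contributes a higher-order remainder, and the interaction piece leaves $C\int_0^T\mathscr P[\rho^\e|\rho]\,dt$ on the right; these collect into the stated right-hand side. I expect the interaction-term cancellation driven by $\|\nabla{\rm u_\e}\|_{L^\infty}=O(\e)$ to be the only genuinely delicate step, with the coercivity bookkeeping and the error estimates being routine once the $O(\e)$ smallness of ${\rm u_\e}$ and ${\rm e}_\e$ is exploited.
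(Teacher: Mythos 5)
Your proposal is correct and follows essentially the same route as the paper: specializing Proposition \ref{prop_key} with ${\rm A}=\e$, ${\rm B}=1$, $\tau=\e$, combining the friction piece with the existing $-\frac1{\e^2}\int\rho^\e u^\e\cdot(u^\e-{\rm u_\e})$ term to produce the coercive momentum term, invoking Proposition \ref{prop_mod} for the interaction piece with the $O(\e)$ bound on $\|\nabla{\rm u_\e}\|_{L^\infty}$ compensating the $\frac1\e$ prefactor, and absorbing the ${\rm e}_\e$ error by Young's inequality into half of the coercivity. The bookkeeping of the momentum coefficient $\frac1{2\e}(\frac1\e-2\|\nabla{\rm u_\e}\|_{L^\infty})$ and the $C\e^2\int_0^T\mathscr{K}[f^\e]\,dt$ remainder matches the paper's computation exactly.
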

\begin{proof} Although Proposition \ref{prop_key} is stated for sufficiently regular solutions,
its time-integrated form can also be justified for weak entropy solutions
by a standard approximation argument. 
Indeed, starting from smooth initial data one applies the relative entropy identity,
then passes to the limit along an approximating sequence, using the entropy inequality
and lower semicontinuity of the dissipation terms. 
This yields the validity of Proposition \ref{prop_key} in the integral inequality form 
for any weak entropy solution $f^\e$ in the sense of Definition \ref{def_weak} (see e.g., \cite{BV25, GM10, SR04, WLL15}).
We may therefore apply this estimate with ${\rm A}=\e$, ${\rm B}=1$, $\tau=\e$, 
\[
F[f^\e] = - \nabla (-\Delta)^{-\alpha} \rho^\e \quad \mbox{and} \quad R[\rho, {\rm u_\e}] =  -    \nabla (-\Delta)^{-\alpha} \rho - \frac1{\e} {\rm u_\e} + \e {\rm e}_\e,
\]
where ${\rm u_\e}$ and ${\rm e}_\e$ are given in \eqref{u_diff}. This gives
  \begin{align*}
& \iint_{\R^d \times \R^d} f^\e \log \lt( \frac{f^\e}{ M_{\rho, {\rm u_\e}}} \rt) dxd\xi + \frac1{2\e^2} \int_0^t \iint_{\R^d \times \R^d} \frac1{f^\e} |\nabla_\xi f^\e + (\xi - u^\e)f^\e|^2 dxd\xi ds \cr
&\quad \leq  \iint_{\R^d \times \R^d} f^\e_0 \log \lt( \frac{f^\e_0}{ M_{\rho_0, {\rm u_{\e0}}}} \rt) dxd\xi +  \frac{\|\nabla {\rm u_\e}\|_{L^\infty}}{\e} \int_0^t\intr \rho^\e |u^\e - {\rm u_\e}|^2\,dx ds \cr
&\qquad  +  \frac12 \|\nabla {\rm u_\e}\|_{L^\infty}^2 \int_0^t \|f^\e(s)\|_{L^1_2}\,ds   - \frac1{\e^2} \int_0^t\intr \rho^\e u^\e \cdot (u^\e - {\rm u_\e})\,dx ds \cr
&\qquad +  \frac1\e \int_0^t\iint_{\R^d \times \R^d}  f^\e ({\rm u_\e}-\xi) \cdot \lt(\nabla (-\Delta)^{-\alpha} (\rho^\e - \rho)   - \frac1{\e} {\rm u_\e} + \e {\rm e}_\e\rt) dxd\xi ds \cr
&\quad =  \iint_{\R^d \times \R^d} f^\e_0 \log \lt( \frac{f^\e_0}{ M_{\rho_0, {\rm u_{\e0}}}} \rt) dxd\xi+  \frac{\|\nabla {\rm u_\e}\|_{L^\infty}}{\e} \int_0^t\intr \rho^\e |u^\e - {\rm u_\e}|^2\,dx ds \cr
&\qquad +  \frac{\|\nabla {\rm u_\e}\|_{L^\infty}^2}{2} \int_0^t\|f^\e(s)\|_{L^1_2}\,ds  - \frac1{\e^2}\int_0^t\intr \rho^\e |u^\e - {\rm u_\e}|^2\,dx ds \cr
&\qquad - \int_0^t\intr \rho^\e(u^\e - {\rm u_\e})\cdot {\rm e}_\e\,dx ds - \frac1\e \int_0^t\intr  \rho^\e (u^\e- {\rm u_\e}) \cdot \nabla (-\Delta)^{-\alpha}(\rho^\e  - \rho) \,dx ds.
\end{align*}
From \eqref{u_diff} we know
\[
 \|\nabla {\rm u_\e}\|_{L^\infty} \leq \e \|\nabla^2 (-\Delta)^{-\alpha} \rho + \nabla^2 \log \rho\|_{L^\infty} \quad \mbox{and} \quad  \e \| {\rm e}_\e\|_{L^\infty} \leq \|  \e\pa_t {\rm u_\e} +   {\rm u_\e} \cdot \nabla {\rm u_\e}\|_{L^\infty} \leq C\e^2
\]
for some $C>0$ independent of $\e > 0$. Thus, we obtain
\[
\lt|\intr \rho^\e ({\rm u_\e} - u^\e) \cdot {\rm e}_\e\,dx\rt| \leq \frac1{2\e^2}\intr \rho^\e |u^\e - {\rm u_\e}|^2\,dx + \frac{\e^2}2 \| {\rm e}_\e\|_{L^\infty}\|\rho^\e\|_{L^1} \leq \frac1{2\e^2}\intr \rho^\e |u^\e - {\rm u_\e}|^2\,dx + C\e^3.
\]
For the nonlocal term, Proposition \ref{prop_mod} with ${\rm A}=\e$, ${\rm B}=1$ gives
\[
-\frac1{\e}\int_0^t \into \rho^\e (u^\e-{\rm u_\e})\cdot \nabla(-\Delta)^{-\alpha}(\rho^\e-\rho)\,dxds
\leq - \mathscr{P}[\rho^\e|\rho](t) + \mathscr{P}[\rho^\e|\rho](0) + C\int_0^t \mathscr{P}[\rho^\e|\rho](s)\,ds.
\]
Combining these bounds concludes the desired result.
\end{proof}

%
%
%
%
%
%
\subsubsection{Proof of Theorem \ref{thm_kin1}: diffusive limit}\label{ssec_diff} From Proposition \ref{re_diffu}, for $\e>0$ sufficiently small so that $4\e \|\nabla {\rm u_\e}\|_{L^\infty}<1$, we obtain
 \begin{align*}
& \sup_{0 \leq t \leq T}\lt( \mathscr{H}[f^\e | M_{\rho, {\rm u_\e}}](t) + \mathscr{P}[\rho^\e | \rho](t) \rt)\cr
&\quad + \frac1{2\e^2} \int_0^T \iint_{\R^d \times \R^d} \frac1{f^\e} |\nabla_\xi f^\e + (\xi - u^\e)f^\e|^2 dxd\xi dt +  \frac1{4\e^2}  \int_0^T \intr \rho^\e |u^\e - {\rm u_\e}|^2\,dxdt \cr
&\qquad \leq \mathscr{H}[f^\e | M_{\rho, {\rm u_\e}}](0) + \mathscr{P}[\rho^\e | \rho](0) + C\e^2 \int_0^T \mathscr{K}[f^\e(t)]\,dt  + C\e^3 + C\int_0^T \mathscr{P}[\rho^\e | \rho](t)\,dt,
\end{align*}
and thus applying Gr\"onwall's lemma yields
\begin{align}\label{con_diff}
\begin{aligned}
&\sup_{0 \leq t \leq T}\lt( \mathscr{H}[f^\e | M_{\rho, {\rm u_\e}}](t) + \mathscr{P}[\rho^\e | \rho](t) \rt) \cr
&\quad + \frac1{2\e^2}\int_0^T \iint_{\R^d \times \R^d} \frac1{f^\e} |\nabla_\xi f^\e + (\xi - u^\e)f^\e|^2 dxd\xi dt + \frac1{4\e^2} \int_0^T \intr \rho^\e |u^\e - {\rm u_\e}|^2\,dxdt \cr
&\qquad \leq C\lt( \mathscr{H}[f^\e | M_{\rho, {\rm u_\e}}](0) + \mathscr{P}[\rho^\e | \rho](0)\rt) + C\e^2 + C\e^2 \int_0^T \mathscr{K}[f^\e(t)]\, dt\cr
&\qquad \leq C\lt( \mathscr{H}[f^\e | M_{\rho, {\rm u_\e}}](0) + \mathscr{P}[\rho^\e | \rho](0)\rt) + C\e^2,
\end{aligned}
\end{align}
due to Lemma \ref{en_diffu}.

Next we compare $\mathscr{H}[f^\e|M_{\rho,0}]$ and $\mathscr{H}[f^\e|M_{\rho,{\rm u_\e}}]$. Since
\begin{align*}
\iint_{\R^d \times \R^d}  f^\e \log \frac{M_{\rho, {\rm u_\e}}}{M_{\rho, 0}}\,dxd\xi &= \frac{1}{2} \iint_{\R^d \times \R^d} f^\e \lt(2 {\rm u_\e} \cdot \xi - |{\rm u_\e}|^2\rt)dxd\xi \cr
&= \frac1{2} \intr \rho^\e (2 {\rm u_\e} \cdot u^\e - |{\rm u_\e}|^2)\,dx \cr
&= -\intr \rho^\e {\rm u_\e} \cdot (u^\e - {\rm u_\e})\,dx + \frac1{2}\intr \rho^\e |{\rm u_\e}|^2\,dx,
\end{align*}
we get
\begin{align*}
\lt|\mathscr{H}[f^\e | M_{\rho, 0}] - \mathscr{H}[f^\e | M_{\rho, {\rm u_\e}}]\rt| &=  \lt|\iint_{\R^d \times \R^d}  f^\e \log \frac{M_{\rho, {\rm u_\e}}}{M_{\rho, 0}}\,dxd\xi\rt| \cr
  & \leq 2\intr \rho^\e|{\rm u_\e}|^2\,dx + \frac1{4}\intr \rho^\e |u^\e - {\rm u_\e}|^2\,dx\cr
  &\leq C\e^2 + \frac12\mathscr{H}[f^\e | M_{\rho, {\rm u_\e}}].
\end{align*}
due to \eqref{u_diff}. Hence the two entropies are equivalent up to $O(\e^2)$, namely
\bq\label{rel_hh}
\mathscr{H}[f^\e | M_{\rho, {\rm u_\e}}] \leq 2 \mathscr{H}[f^\e | M_{\rho, 0}] + C\e^2
\eq
and
\[
\mathscr{H}[f^\e | M_{\rho, 0}] \leq 2 \mathscr{H}[f^\e | M_{\rho, {\rm u_\e}}] + C\e^2.
\]
Combining this with \eqref{con_diff} gives
\begin{align}\label{con_diff2}
\begin{aligned}
&\sup_{0 \leq t \leq T}\lt( \mathscr{H}[f^\e | M_{\rho, 0}](t) + \mathscr{P}[\rho^\e | \rho](t) \rt) \cr
&\quad + \frac1{\e^2}\int_0^T \iint_{\R^d \times \R^d} \frac1{f^\e} |\nabla_\xi f^\e + (\xi - u^\e)f^\e|^2 dxd\xi dt + \frac1{\e^2} \int_0^T \intr \rho^\e |u^\e - {\rm u_\e}|^2\,dxdt \cr
&\qquad \leq C\lt( \mathscr{H}[f^\e | M_{\rho, 0}](0) + \mathscr{P}[\rho^\e | \rho](0)\rt) + C\e^2
\end{aligned}
\end{align}
for some $C>0$ independent of $\e$.
 
 Estimate \eqref{con_diff2}, together with Lemma \ref{lem:prop}, ensures the convergence of $f^\e$ and $\rho^\e$. For the momentum, we note
 \begin{align*}
 \intr | \rho^\e u^\e - \rho {\rm u_\e} | \,dx &\leq   \intr \rho^\e | u^\e - {\rm u_\e}|\, dx +   \intr |\rho^\e - \rho| |{\rm u_\e}|\, dx \cr
 &\leq \lt(  \intr \rho^\e | u^\e - {\rm u_\e}|^2\, dx\rt)^\frac12 + \e\|\nabla (-\Delta)^{-\alpha} \rho + \nabla \log \rho \|_{L^\infty} \|\rho^\e - \rho\|_{L^1},
 \end{align*}
 and thus
 \[
\frac1\e\int_0^T\intr | \rho^\e u^\e - \rho {\rm u_\e} | \,dxdt \leq C \lt(  \frac1{\e^2}\int_0^T\intr \rho^\e | u^\e - {\rm u_\e}|^2\, dxdt\rt)^\frac12 + C\sup_{0 \leq t \leq T} \|(\rho^\e - \rho)(t)\|_{L^1}.
 \]
Using \eqref{con_diff} and the convergence of $\rho^\e$, this implies
\[
\frac1\e\int_0^T\intr | \rho^\e u^\e - \rho {\rm u_\e} | \,dxdt \leq C\e + C\lt( \mathscr{H}[f^\e | M_{\rho, 0}](0) + \mathscr{P}[\rho^\e | \rho](0)\rt)^\frac12.
\]
 Hence, we have
 \[
 \rho^\e \frac{u^\e}\e \to -\rho\lt(\nabla (-\Delta)^{-\alpha} \rho + \nabla \log \rho \rt) \quad \mbox{in } L^1((0,T) \times \R^d).
 \]

We next derive the sharper low-topology estimate for $\rho^\e$, which is consistent with the optimal diffusive scaling suggested by the formal expansion in Appendix \ref{app_optimal_diff}. To this end, we assume that either $T$ is sufficiently small or $\|\rho\|_{L^\infty(0,T;H^{\beta+1})}$ is sufficiently small 
for some $\beta>\frac d2+1$.

Recall that, by Definition~\ref{def_weak}, $f^\e$ satisfies the kinetic equation  in the sense of distributions. In particular, the force term  $\nabla(-\Delta)^{-\alpha}\rho^\e \cdot \nabla_\xi f^\e$ is interpreted through integration by parts in the velocity variable, which implicitly requires
\[
f^\e \, \nabla(-\Delta)^{-\alpha}\rho^\e \in L^1_{\mathrm{loc}}((0,T)\times\R^d\times\R^d).
\]
Since the force field does not depend on $\xi$, this implies
\[
\rho^\e \, \nabla(-\Delta)^{-\alpha}\rho^\e \in L^1_{\mathrm{loc}}((0,T)\times\R^d),
\]
so that the macroscopic drift term
\[
\nabla\cdot(\rho^\e \nabla(-\Delta)^{-\alpha}\rho^\e)
\]
is well defined in the sense of distributions.

Moreover, since
\[
f^\e \in L^\infty\bigl(0,T;L^1(\R^d\times\R^d,(1+|\xi|^2)\,dxd\xi)\bigr),
\]
the velocity moments
\[
\rho^\e:=\intr f^\e\,d\xi,\quad m^\e:=\intr\xi f^\e\,d\xi = \rho^\e u^\e,\quad S^\e:=\intr(\xi\otimes\xi-\mathbb I_d)f^\e\,d\xi
\]
belong to $L^\infty(0,T;L^1(\R^d))$, $L^\infty(0,T;L^1(\R^d;\R^d))$, and $L^\infty(0,T;L^1(\R^d;\R^{d\times d}))$, respectively. Hence, by integrating the kinetic equation with respect to $\xi$, 
we deduce that $\rho^\e$ satisfies
\[
\partial_t \rho^\e-\Delta\rho^\e = \nabla\cdot(\rho^\e\nabla(-\Delta)^{-\alpha}\rho^\e) +\e\nabla\cdot\partial_t m^\e +\nabla\otimes \nabla : S^\e
\]
in the sense of distributions on $(0,T)\times\R^d$.

We work with the inhomogeneous Sobolev space $H^\beta(\R^d)$  equipped with the norm
\[
\|f\|_{H^\beta}^2 := \intr (1+|\eta|^2)^\beta |\hat f (\eta)|^2\,d\eta.
\]
Its dual space is $H^{-\beta}(\R^d)$. Since $L^1(\R^d) \subset H^{-s}(\R^d)$ for any $s>\frac d2$, and $\beta>\frac d2+1$, we have
\[
\rho^\e(t),\,\rho(t) \in L^1(\R^d) \subset H^{-\beta}(\R^d)
\quad \text{for all } t\in[0,T].
\]
Therefore the duality pairing $\langle \rho^\e(t)-\rho(t),\varphi\rangle$ is well defined for any $\varphi\in H^\beta(\R^d)$.

We now record the corresponding Duhamel formula in duality form. For any $\varphi\in H^\beta(\R^d)$, one has
\[
\begin{aligned}
\langle \rho^\e(t),\varphi\rangle
&=\langle \rho_0^\e,\mh_t\star_x\varphi\rangle
-\int_0^t \Big\langle \rho^\e\nabla(-\Delta)^{-\alpha}\rho^\e,\,
\nabla(\mh_{t-s}\star_x\varphi)\Big\rangle\,ds \\
&\quad
-\e\langle \rho^\e u^\e(t),\nabla\varphi\rangle
+\e\langle \rho_0^\e u_0^\e,\nabla(\mh_t\star_x\varphi)\rangle \\
&\quad
-\e\int_0^t \langle \rho^\e u^\e(s),\nabla\Delta(\mh_{t-s}\star_x\varphi)\rangle\,ds
+\int_0^t \langle S^\e(s),\nabla^2(\mh_{t-s}\star_x\varphi)\rangle\,ds,
\end{aligned}
\]
where $\mh_t(x) := (4\pi t)^{-d/2}e^{-\frac{|x|^2}{4t}}$ is the heat kernel and $\star_x$ denotes the convolution with respect to  $x$. Here we used
\[
\int_0^t f(t-s) \frac{dg}{ds}(s)\,ds = \int_0^t \frac{df}{ds}(t-s)g(s)\,ds+ f(0)g(t) - f(t)g(0)
\]
and $\pa_s( \mh_s\star_x f) = \Delta (\mh_s \star_x f)$.
The same identity holds for $\rho$, with the $m^\e$- and $S^\e$-terms removed.

Now, let $\varphi\in H^\beta(\R^d)$ be arbitrary. Applying the above duality formula to 
$\rho^\e$ and $\rho$, and taking the difference, we obtain
\begin{align*}
& \intr (\rho^\e -\rho)\varphi\,dx  \\
&\quad = \intr \mh_t\star_x (\rho_0^\e -\rho_0) \varphi\,dx - \int_0^t \intr(\rho^\e -\rho)(s)\nabla(-\Delta)^{-\alpha}(\rho^\e - \rho)(s) \mh_{t-s}\star_x \nabla\varphi\,dxds  \\
&\qquad - \int_0^t \intr (\rho^\e -\rho)(s) \lt[ (\nabla(-\Delta)^{-\alpha} \rho)(s)\cdot (\mh_{t-s} \star_x\nabla \varphi) - \nabla\cdot(-\Delta)^{-\alpha} (\rho(s) \mh_{t-s}\star_x \nabla\varphi)\rt]\,dxds \\
&\qquad -\e \int_0^t \intrr \Delta\mh_{t-s}(x-y)(\rho^\e u^\e)(s,y)\cdot \nabla\varphi(x)\,dydxds-\e\intr \rho^\e u^\e(t,x) \cdot \nabla\varphi\,dx\\
&\qquad + \e\intr \mh_t\star_x (\rho_0^\e u_0^\e)\cdot \nabla\varphi\,dx + \int_0^t \intr \lt( \intr (\xi\otimes \xi - \mathbb{I}_d)f^\e\,d\xi\rt) :\mh_{t-s}\star_x \nabla\otimes\nabla \varphi\,dxds\\
&\quad =: \sum_{i=1}^7 I_i.
\end{align*}
We estimate the terms $I_i$ one by one. To begin with, one readily obtains
\[
I_1 \le \|\rho_0^\e-\rho_0\|_{H^{-\beta}} \|\mh_t \star\varphi\|_{H^\beta} \le C\|\varphi\|_{H^\beta} \|\rho_0^\e-\rho_0\|_{H^{-\beta}},
\]
and
\begin{align*}
I_2 &\le \int_0^t \|(\rho^\e -\rho)(s)\|_{\dot{H}^{-\alpha}}^2 \|\mh_{t-s}\star_x \nabla\varphi \|_{W_x^{1,\infty}}\,ds\\
&\le C\lt( \mathscr{H}[f^\e | M_{\rho, 0}](0) + \mathscr{P}[\rho^\e | \rho](0) +\e^2 \rt)\int_0^t \|\mh_{t-s}\star_x \nabla\varphi\|_{H^\beta}\,ds\\
&\le C\|\varphi\|_{H^\beta}\lt(\mathscr{H}[f^\e | M_{\rho, 0}](0) + \mathscr{P}[\rho^\e | \rho](0) + \e^2 \rt) \int_0^t (t-s)^{-\frac12}\,ds\\
&\le C\|\varphi\|_{H^\beta}\lt(\mathscr{H}[f^\e | M_{\rho, 0}](0) + \mathscr{P}[\rho^\e | \rho](0) + \e^2 \rt),
\end{align*}
where we used
\[
x^k e^{-ax} \le \lt(\frac{k}{a} \rt)^k e^{-k}\quad \mbox{on} \quad x >0
\]
and
\begin{align*}
\|\mh_{t-s}\star\nabla\varphi\|_{H^\beta}^2 \le \intr (1+|\eta|^2)^\beta |\eta|^2 e^{-2(t-s)|\eta|^2} |\hat\varphi|^2\,d\eta \le C(t-s)^{-1} \|\varphi\|_{H^\beta}^2.
\end{align*}
Next, we decompose $I_3$ as
\begin{align*}
I_3&=- \int_0^t \intr (\rho^\e -\rho)(s) (\nabla(-\Delta)^{-\alpha} \rho)(s) \cdot (\mh_{t-s} \star_x\nabla \varphi) \,dx ds\\
&\quad + \int_0^t \intr (\rho^\e -\rho)(s)  \nabla\cdot(-\Delta)^{-\alpha} (\rho(s) \mh_{t-s}\star_x \nabla\varphi)\,dxds\\
&=: I_{31} + I_{32}.
\end{align*}
For the first part $I_{31}$, we use
\bq\label{ineq_beta}
\|f\|_{H^\beta} \le C(\|f\|_{L^2} + \|f\|_{\dot{H}^\beta}), \quad \|uv\|_{\dot{H}^\beta} \le C(\|u\|_{L^\infty}\|v\|_{\dot{H}^\beta} + \|u\|_{\dot{H}^\beta}\|v\|_{L^\infty})
\eq
to have
\begin{align*}
I_{31} &\le \int_0^t \|\rho^\e - \rho\|_{H^{-\beta}} \|\nabla(-\Delta)^{-\alpha} \rho(s) \cdot (\mh_{t-s} \star_x\nabla \varphi)\|_{H^\beta}\,ds\\
&\le C \int_0^t \|\rho^\e - \rho\|_{H^{-\beta}}\|\nabla(-\Delta)^{-\alpha} \rho(s) \cdot (\mh_{t-s} \star_x\nabla \varphi)\|_{L^2}\,ds\\
&\quad +C \int_0^t \|\rho^\e - \rho\|_{H^{-\beta}}\|\nabla(-\Delta)^{-\alpha} \rho(s) \cdot (\mh_{t-s} \star_x\nabla \varphi)\|_{\dot{H}^\beta}\,ds\\
&\le C\|\nabla(-\Delta)^{-\alpha}\rho\|_{L^\infty(0,T;L^\infty\cap \dot{H}^\beta)} \int_0^t \|\rho^\e - \rho\|_{H^{-\beta}} \|\mh_{t-s} \star_x\nabla \varphi\|_{H^\beta}\,ds\\
&\le C\|\nabla(-\Delta)^{-\alpha}\rho\|_{L^\infty(0,T;L^\infty\cap \dot{H}^\beta)} \|\varphi\|_{H^\beta} \int_0^t (t-s)^{-\frac12} \|\rho^\e - \rho\|_{H^{-\beta}}\,ds.  
\end{align*}
For the second part $I_{32}$, we distinguish the cases $\alpha \in (\frac12,1)$ and $\alpha \in (0,\frac12]$. If $\alpha \in (\frac12,1)$, the Hardy--Littlewood--Sobolev inequality yields
\begin{align*}
 \|\nabla \cdot(-\Delta)^{-\alpha} (\rho(s)\mh_{t-s}\star_x \nabla\varphi)\|_{L^2} &\le C\|\rho(s)\mh_{t-s}\star_x \nabla\varphi\|_{\dot{H}^{1-2\alpha}}\\
 &\le C\|\rho(s)\mh_{t-s}\star_x \nabla\varphi\|_{L^p}\\
 &\le C \|\rho\|_{L^\infty(0,T;L^p)} \|\mh_{t-s}\star_x\nabla\varphi\|_{L^\infty}\\
 &\le C \|\rho\|_{L^\infty(0,T;L^p)} \|\mh_{t-s}\star_x\varphi\|_{H^\beta},
\end{align*}
where $p>1$ satisfies $\frac12 = \frac1p - \frac{2\alpha-1}{d}$. If $\alpha\in(0,\frac12]$, one simply gets
\begin{align*}
 &\|\nabla \cdot(-\Delta)^{-\alpha} (\rho(s)\mh_{t-s}\star_x \nabla\varphi)\|_{L^2} \\
 &\quad \le  C\| \rho(s)\mh_{t-s}\star_x \nabla\varphi\|_{\dot{H}^{1-2\alpha}}\\
 &\quad \le  C \lt(\|\rho\|_{L^\infty(0,T;L^\infty)}\|\mh_{t-s}\star_x \nabla \varphi\|_{\dot{H}^{1-2\alpha}} + \|\rho\|_{L^\infty(0,T;\dot{H}^{1-2\alpha})}\|\mh_{t-s}\star_x \nabla\varphi\|_{L^\infty}\rt) \\
 &\quad \le C\|\rho\|_{L^\infty(0,T;H^\beta)} \|\mh_{t-s}\star_x \varphi\|_{H^\beta}.
\end{align*}

Combining these bounds with \eqref{ineq_beta}, we get
\begin{align*}
I_{32} &\le \int_0^t \|\rho^\e -\rho\|_{H^{-\beta}} \|\nabla \cdot(-\Delta)^{-\alpha} (\rho(s)\mh_{t-s}\star_x \nabla\varphi)\|_{H^\beta}\,ds\\
&\le C\int_0^t \|\rho^\e -\rho\|_{H^{-\beta}} \|\nabla \cdot(-\Delta)^{-\alpha} (\rho(s)\mh_{t-s}\star_x \nabla\varphi)\|_{L^2}\,ds\\
&\quad + C\int_0^t \|\rho^\e -\rho\|_{H^{-\beta}} \| \rho(s)\mh_{t-s}\star_x \nabla\varphi\|_{\dot{H}^{\beta+1-2\alpha}}\,ds\\
&\le C\|\rho\|_{L^\infty(0,T;L^p\cap H^\beta)}\int_0^t \|\rho^\e -\rho\|_{H^{-\beta}} \|\mh_{t-s}\star_x \varphi\|_{H^\beta}\,ds\\
&\quad + C\|\rho\|_{L^\infty(0,T;L^\infty\cap H^{\beta+1-2\alpha})}\int_0^t \|\rho^\e -\rho\|_{H^{-\beta}} \|\mh_{t-s}\star_x \varphi\|_{H^{\beta+2-2\alpha}}\,ds\\
&\le C\|\rho\|_{L^\infty(0,T;L^p\cap H^{\beta+1})}\|\varphi\|_{H^\beta}\int_0^t (t-s)^{-1+\alpha}\|\rho^\e -\rho\|_{H^{-\beta}}\,ds. 
\end{align*}

We now turn to the remaining terms. For $I_4$, we use $\intr \rho^\e|u^\e -{\rm u}_\e|^2\,dx \le \mathscr{H}[f^\e |M_{\rho, {\rm u}_\e}]$ to obtain
\begin{align*}
I_4 &= -\e \int_0^t \intr \rho^\e u^\e \mh_{t-s}\star_x (\Delta \nabla\varphi)\,dxds\\
&\le C\e \|\varphi\|_{H^\beta}\int_0^t \max\lt\{1, (t-s)^{-\frac{\frac d2 +3-\beta}{2}}\rt\} \lt(\|\rho^\e (u^\e - {\rm u}_\e)\|_{L^1} + \|\rho^\e {\rm u}_\e\|_{L^1}\rt)\,ds\\
&\le C\e \|\varphi\|_{H^\beta} \lt(\int_0^t \lt[1+(t-s)^{-\frac{\frac d2 +3-\beta}{2}}\rt]\,ds\rt)^{\frac12} \lt(\int_0^t \intr (t-s)^{-\frac{\frac d2 +3-\beta}{2}}  \rho^\e |u^\e - {\rm u}_\e|^2\,dxds\rt)^{\frac12} +C\e^2 \|\varphi\|_{H^\beta}\\
&\le C\e \|\varphi\|_{H^\beta} \lt( \mathscr{H}[f^\e | M_{\rho, {\rm u}_\e}](0)  + \mathscr{P}[\rho^\e | \rho](0)+ \e^2\rt)^{\frac12},
\end{align*}
where we used $\beta>\frac d2+1$ (i.e.  $\frac{\frac d2 +3-\beta}{2} <1$) and
\[\begin{aligned}
\|\mh_{t-s}\star_x \Delta \nabla \varphi\|_{L^\infty} &\le  C\intr |\eta|^3 e^{-(t-s)|\eta|^2}|\hat\varphi|\,d\eta\\
&\le C\lt(\intr \frac{|\eta|^6}{(1+|\eta|^2)^\beta} e^{-(t-s)|\eta|^2}\,d\eta \rt)^{\frac12}\|\varphi\|_{H^\beta}\\
&\le C\max\lt\{1,(t-s)^{-\frac{\frac d2+3-\beta}{2}}\rt\} \|\varphi\|_{H^\beta}.
\end{aligned}\]
For $I_5$, we find
\begin{align*}
I_5 &\le C\e\|\rho^\e u^\e\|_{L^1} \|\nabla \varphi\|_{L^\infty}\\
&\le C\e \lt( \|\rho^\e (u^\e -{\rm u}_\e)\|_{L^1} + \e\rt) \|\varphi\|_{H^\beta}\\
&\le C\e \|\varphi\|_{H^\beta} \lt( \mathscr{H}[f^\e | M_{\rho, {\rm u}_\e}](0)  + \mathscr{P}[\rho^\e | \rho](0) + \e^2\rt)^{\frac12}.
\end{align*}
Similarly, for the initial contribution $I_6$, we have
\[\begin{aligned}
I_6 &\le \e \|\rho_0^\e u_0^\e\|_{L^1} \|\mh_t \star_x \nabla\varphi\|_{L^\infty} \le C\e \|\varphi\|_{H^\beta} \lt(  \mathscr{H}[f^\e | M_{\rho, {\rm u}_\e}](0) + \e^2\rt)^{\frac12}.
\end{aligned}\]
It remains to estimate $I_7$, which contains the second-order velocity moment. To this end, we first observe that
\begin{align*}
\intr (\xi\otimes \xi -\mathbb{I}_d)f^\e\,d\xi &= \intr (\xi \otimes \xi - u^\e \otimes u^\e - \mathbb{I}_d)f^\e\,d\xi + \rho^\e u^\e \otimes u^\e\\
&= \intr \lt[((\xi - u^\e)f^\e +\nabla_\xi f^\e)\otimes \xi + u^\e \otimes ((\xi -u^\e)f^\e +\nabla_\xi f^\e ) \rt]\,d\xi + \rho^\e u^\e \otimes u^\e.
\end{align*}
This gives
\begin{align*}
I_7 &\le \int_0^t \lt(\intrr \frac{1}{f^\e}|(\xi-u^\e) f^\e + \nabla_\xi f^\e|^2\,dxd\xi\rt)^{\frac12}\lt(\intrr |\xi|^2 f^\e \,dxd\xi \rt)^{\frac12} \|\mh_{t-s} \star_x \nabla \otimes \nabla \varphi\|_{L_x^\infty}\,ds\\
&\quad + \int_0^t \lt(\intrr \frac{1}{f^\e}|(\xi-u^\e) f^\e + \nabla_\xi f^\e|^2\,dxd\xi\rt)^{\frac12}\lt(\intr \rho^\e  |u^\e|^2\,dx \rt)^{\frac12}\|\mh_{t-s} \star_x \nabla \otimes \nabla \varphi\|_{L_x^\infty}\,ds\\
&\quad + \int_0^t \intr \lt(\rho^\e |u^\e - {\rm u}_\e|^2 + \rho^\e |{\rm u}_\e|^2\rt) \|\mh_{t-s} \star_x \nabla \otimes \nabla \varphi\|_{L_x^\infty}\,dxds \\
&\le C\|\varphi\|_{H^\beta} \lt(\int_0^t (t-s)^{-\frac12}\lt(\intrr \frac{1}{f^\e}|(\xi-u^\e) f^\e + \nabla_\xi f^\e|^2(s)\,dxd\xi\rt) ds\rt)^{\frac12}\\
&\quad + C\|\varphi\|_{H^\beta} \int_0^t (t-s)^{-\frac12}\lt(\mathscr{H}[f^\e |M_{\rho, {\rm u}_\e}](0) + \e^2 \rt)\,dxds\\
&\le C\|\varphi\|_{H^\beta} \lt(\int_0^t (t-s)^{-\frac12}\lt(\intrr \frac{1}{f^\e}|(\xi-u^\e) f^\e + \nabla_\xi f^\e|^2(s)\,dxd\xi\rt) ds\rt)^{\frac12}\\
&\quad + C\|\varphi\|_{H^\beta} \lt(\mathscr{H}[f^\e | M_{\rho, {\rm u}_\e}](0)  + \mathscr{P}[\rho^\e | \rho](0) + \e^2\rt).
\end{align*}

Collecting the above estimates,  and using \eqref{rel_hh}, we arrive at
\begin{align}\label{ineq_diff_sha}
\begin{aligned}
\|(\rho^\e -\rho)(t)\|_{H^{-\beta}} &\le C\|\rho_0^\e - \rho_0\|_{H^{-\beta}} + C\lt(\mathscr{H}[f^\e | M_{\rho, 0}](0) + \mathscr{P}[\rho^\e | \rho](0) + \e^2 \rt)\\
&\quad + C\int_0^t \lt((t-s)^{-\frac12} + (t-s)^{-1+\alpha}\rt)\|(\rho^\e-\rho)(s)\|_{H^{-\beta}}\,ds\\
&\quad + C\lt(\int_0^t (t-s)^{-\frac12}\lt(\intrr \frac{1}{f^\e}|(\xi-u^\e) f^\e + \nabla_\xi f^\e|^2(s)\,dxd\xi\rt) ds\rt)^{\frac12}
\end{aligned}
\end{align}
due to \eqref{rel_hh}.

To close the above estimate, we use the following auxiliary lemma.
\begin{lemma}\label{lem_aux_diff}
Let $T>0$, $\alpha>0$, and let $f,g \in L^2(0,T)$ be nonnegative functions such that
\[
f(t) \le g(t) + C\int_0^t \bigl((t-s)^{-\frac12} + (t-s)^{-1+\alpha}\bigr) f(s)\,ds
\]
for a.e. $t\in (0,T)$. Then there exists a constant $C_T>0$, depending only on $C$, $\alpha$, and $T$, such that
\[
\|f\|_{L^2(0,T)} \le C_T \|g\|_{L^2(0,T)}.
\]
\end{lemma}

\begin{proof}
Define
\[
\kappa(t):=C\bigl(t^{-\frac12}+t^{-1+\alpha}\bigr), \quad t\in(0,T),
\]
and, for measurable functions $\phi,\psi$ on $(0,T)$, set
\[
(\phi \star_t \psi)(t):=\int_0^t \phi(t-s)\psi(s)\,ds.
\]
Then the assumption reads
\[
f \le g + \kappa \star_t f \quad \text{a.e. on } (0,T).
\]
For $\lambda>0$, set
\[
F(t):=e^{-\lambda t}f(t), \quad G(t):=e^{-\lambda t}g(t),
\]
and define
\[
\kappa_\lambda(t):=e^{-\lambda t}\kappa(t) = C e^{-\lambda t}\lt(t^{-\frac12}+t^{-1+\alpha}\rt).
\]
Then, we obtain
\[
F(t)\le G(t)+(\kappa_\lambda \star_t F)(t).
\]

Since $\alpha>0$, we find $\kappa_\lambda\in L^1(0,T)$ and
\[
\|\kappa_\lambda\|_{L^1(0,T)} = C\int_0^T e^{-\lambda t}\lt(t^{-\frac12}+t^{-1+\alpha}\rt) dt \to 0
\quad\text{as }\lambda\to\infty.
\]
Thus, choosing $\lambda>0$ sufficiently large, we may assume that
\[
\|\kappa_\lambda\|_{L^1(0,T)}<1.
\]

By Young's inequality for time convolution, we get
\[
\|F\|_{L^2(0,T)}
\le \|G\|_{L^2(0,T)}
 + \|\kappa_\lambda \star_t F\|_{L^2(0,T)}
\le \|G\|_{L^2(0,T)}
 + \|\kappa_\lambda\|_{L^1(0,T)}\|F\|_{L^2(0,T)}.
\]
It follows that
\[
\bigl(1-\|\kappa_\lambda\|_{L^1(0,T)}\bigr)\|F\|_{L^2(0,T)}
\le \|G\|_{L^2(0,T)},
\]
and thus
\[
\|F\|_{L^2(0,T)}
\le \frac{1}{1-\|\kappa_\lambda\|_{L^1(0,T)}}\|G\|_{L^2(0,T)}.
\]

Finally, since $\|G\|_{L^2(0,T)}\le \|g\|_{L^2(0,T)}$ and $\|f\|_{L^2(0,T)}
\le e^{\lambda T}\|F\|_{L^2(0,T)}$, we have
\[
\|f\|_{L^2(0,T)} \le  \frac{e^{\lambda T}}{1-\|\kappa_\lambda\|_{L^1(0,T)}}\|g\|_{L^2(0,T)}.
\]
This concludes the desired result.
\end{proof}

We now define
\begin{align*}
g(t) &:=C\|\rho_0^\e - \rho_0\|_{H^{-\beta}} + C\lt(\mathscr{H}[f^\e | M_{\rho, 0}](0) + \mathscr{P}[\rho^\e | \rho](0) + \e^2 \rt)\\
&\quad + C\lt(\int_0^t (t-s)^{-\frac12}\lt(\intrr \frac{1}{f^\e}|(\xi-u^\e) f^\e + \nabla_\xi f^\e|^2(s)\,dxd\xi\rt) ds\rt)^{\frac12}.
\end{align*}
Then the inequality \eqref{ineq_diff_sha} takes the form
\[
\|(\rho^\e-\rho)(t)\|_{H^{-\beta}}
\le g(t) + C\int_0^t \bigl((t-s)^{-\frac12}+(t-s)^{-1+\alpha}\bigr)\|(\rho^\e-\rho)(s)\|_{H^{-\beta}}\,ds.
\]
Moreover, $g\in L^2(0,T)$, and by Fubini's theorem,
\begin{align*}
\int_0^T |g(t)|^2\,dt
&\le C\lt(\|\rho_0^\e - \rho_0\|_{H^{-\beta}}
 + \mathscr{H}[f^\e | M_{\rho, 0}](0) + \mathscr{P}[\rho^\e | \rho](0)
 + \e^2\rt)^2\\
&\quad + C\int_0^T \int_0^t (t-s)^{-\frac12}
\lt(\iint_{\R^d\times\R^d}\frac{1}{f^\e}|(\xi-u^\e) f^\e + \nabla_\xi f^\e|^2(s)\,dxd\xi\rt) dsdt\\
&= C\lt(\|\rho_0^\e - \rho_0\|_{H^{-\beta}}
 + \mathscr{H}[f^\e | M_{\rho, 0}](0) + \mathscr{P}[\rho^\e | \rho](0)
 + \e^2\rt)^2\\
&\quad + C\int_0^T
\lt(\iint_{\R^d\times\R^d}\frac{1}{f^\e}|(\xi-u^\e) f^\e + \nabla_\xi f^\e|^2(s)\,dxd\xi\rt)
\lt(\int_s^T (t-s)^{-\frac12}\,dt\rt)ds\\
&\le C\lt(\|\rho_0^\e - \rho_0\|_{H^{-\beta}}
 +\mathscr{H}[f^\e | M_{\rho, 0}](0) + \mathscr{P}[\rho^\e | \rho](0)
 + \e^2\rt)^2\\
&\quad + C\int_0^T\iint_{\R^d\times\R^d}\frac{1}{f^\e}|(\xi-u^\e) f^\e + \nabla_\xi f^\e|^2\,dxd\xi dt.
\end{align*}
Applying Lemma \ref{lem_aux_diff}, we deduce
\begin{align*}
\int_0^T \|(\rho^\e -\rho)(t)\|_{H^{-\beta}}^2\,dt 
&\le C\lt(\|\rho_0^\e - \rho_0\|_{H^{-\beta}} + \mathscr{H}[f^\e | M_{\rho, 0}](0) + \mathscr{P}[\rho^\e | \rho](0) + \e^2 \rt)^2 \\
 &\quad + C\int_0^T\intrr \frac{1}{f^\e}|(\xi-u^\e) f^\e + \nabla_\xi f^\e|^2\,dxd\xi dt\\
&\le C\lt(\|\rho_0^\e - \rho_0\|_{H^{-\beta}} + \mathscr{H}[f^\e | M_{\rho, 0}](0) + \mathscr{P}[\rho^\e | \rho](0) + \e^2 \rt)^2
\end{align*}
due to \eqref{con_diff2}. This completes the proof.

\begin{remark}\label{rmk_opt_diff} 
A corresponding rigorous estimate toward the shifted Maxwellian $M_{\rho,{\rm u}_\e}$ can also be obtained in a negative Sobolev topology. Indeed, if we choose $\sigma>\max\{d,\beta\}$, then Corollary \ref{cor_gd2}, combined with Theorem \ref{thm_kin1}, implies
\begin{align*}
\int_0^T \|f^\e-M_{\rho, {\rm u}_\e}\|_{H_{x,\xi}^{-\sigma}}^2\,dt &\le C\int_0^T\iint_{\R^d \times \R^d} \frac1{f^\e}|\nabla_\xi f^\e + (\xi-u^\e) f^\e|^2\,dxd\xi dt +C\int_0^T \intr \rho^\e |u^\e - {\rm u}_\e|^2\,dxdt  \cr
&\quad + C\|{\rm u}_\e\|_{L^\infty((0,T)\times\R^d)}^2 \int_0^T \|\rho^\e - \rho\|_{L^1}^2\,dt  +C\int_0^T \|\rho^\e - \rho\|_{H^{-\sigma}}^2\,dt\\
&\le C\lt( \|\rho_0^\e -\rho_0\|_{H^{-\beta}} + \mathscr{H}[f^\e | M_{\rho, 0}](0)  + \mathscr{P}[\rho^\e | \rho](0) + \e^2\rt)^2,
\end{align*}
where we used $\|\rho^\e -\rho\|_{H^{-\sigma}} \le \|\rho^\e - \rho\|_{H^{-\beta}}$.
\end{remark}

%
%
%
%
%
%
\subsection{High-field limit}

In this subsection, we turn to the high-field limit of \eqref{eq:gkin}. Recall
\bq\label{prelim_high}
\pa_t f^\e + v\cdot \nabla_x f^\e - \frac1\e \nabla (-\Delta)^{-\alpha} \rho^\e  \cdot \nabla_\xi  f^\e = \frac1\e\nabla_\xi \cdot (\nabla_\xi f^\e + \xi f^\e), \quad t>0, \ (x,\xi) \in \R^d \times \R^d.
\eq
Our aim is to derive the limiting aggregation equation
\[
\pa_t \rho -  \nabla \cdot (\rho \nabla (-\Delta)^{-\alpha} \rho) =0.
\]

Compared to the diffusive regime, the high-field scaling introduces a drift term with prefactor $\frac1\e$. At first sight, this looks more singular, but the key cancellation is that the same scaling also appears in the friction term of the Fokker--Planck operator. As a result, the entropy identity features a balance in which the potentially singular contribution is compensated, leaving a clean $\frac1\e$ dissipation structure. Moreover, rewriting the system in conservation form,
\begin{align}\label{lim_high}
\begin{aligned}
&\pa_t \rho +  \nabla \cdot (\rho {\rm u}) =0, \cr
&\e \lt(\pa_t (\rho {\rm u}) +  \nabla \cdot (\rho {\rm u} \otimes {\rm u}) + \nabla \rho\rt)  = - \rho \nabla (-\Delta)^{-\alpha} \rho -   \rho {\rm u} + \e \rho {\rm e},
\end{aligned}
\end{align}
where
\[
\rho {\rm e} = \rho \pa_t {\rm u} +  \rho {\rm u} \cdot \nabla {\rm u} + \nabla \rho, \quad {\rm u} = -\nabla (-\Delta)^{-\alpha} \rho,
\]
shows that the effective velocity field is $\e$-independent and directly controlled by the limit density. This stands in contrast with the diffusive case, where the relevant flux is $\frac1\e \rho^\e u^\e$ and only weak convergence can be extracted. In the high-field setting, however, the unscaled flux $\rho^\e u^\e$ can be handled quantitatively, leading to sharper and structurally simpler estimates.

As in the diffusive regime, we begin by recording uniform estimates from the entropy-dissipation structure and the kinetic energy bound.
\begin{lemma}\label{en_high} Let $T>0$ and $f^\e$ be a weak entropy solution to the equation \eqref{prelim_high} in the sense of Definition \ref{def_weak} on the time interval $[0,T]$. Then we have
\[
\sup_{0 \leq t \leq T}\lt( \mathscr{F}[f^\e(t)] + \frac1{\e} \mathscr{P}[\rho^\e(t)]\rt)  + \frac1{\e}\int_0^T \iint_{\R^d \times \R^d} \frac1{f^\e}|\nabla_\xi f^\e + \xi f^\e|^2 \,dxd\xi dt  \leq \mathscr{F}[f_0^\e] + \frac1{\e} \mathscr{P}[\rho^\e_0]
\]
and
\[
\int_0^T \mathscr{K}[f^\e(t)]\,dt \leq dT + \e\mathscr{K}[f^\e_0] + \mathscr{P}[\rho_0^\e].
\]
\end{lemma}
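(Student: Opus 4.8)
The plan is to read off both inequalities from the general energy balance of Lemma \ref{lem_en}, specialized to the high-field scaling. First I would observe that \eqref{prelim_high} is merely \eqref{eq_high_sca} divided by $\e$, and that \eqref{eq_high_sca} is exactly \eqref{eq_aux} with ${\rm A}={\rm B}=\e$, $\tau=1$; since multiplying the whole equation by a nonzero constant leaves the energy--dissipation structure unchanged, Lemma \ref{lem_en} and the notion of weak entropy solution in Definition \ref{def_weak} apply to $f^\e$ with these parameter values. Substituting $\frac1{\rm B}=\frac1\e$, $\frac1{\tau{\rm A}}=\frac1\e$, $\tau{\rm A}=\e$, and $\frac{\tau{\rm A}}{\rm B}=1$ into the two conclusions of Lemma \ref{lem_en} turns them into precisely the two displayed bounds.

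For the total-energy estimate there is essentially nothing to prove: Definition \ref{def_weak}(iii), specialized to the present scaling, is the pointwise-in-time inequality
\[
\mathscr{F}[f^\e(t)] + \frac1\e\mathscr{P}[\rho^\e(t)] + \frac1\e\int_0^t \iint_{\R^d\times\R^d}\frac1{f^\e}\lt|\nabla_\xi f^\e + \xi f^\e\rt|^2\,dxd\xi\,ds \le \mathscr{F}[f_0^\e] + \frac1\e\mathscr{P}[\rho_0^\e],
\]
valid for a.e. $t\in[0,T]$. Discarding the nonnegative dissipation integral and taking the supremum in $t$ controls the energy term, while keeping the full integral over $[0,T]$ controls the dissipation; together these give the first assertion.

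For the time-integrated kinetic-energy bound I would reproduce the kinetic-plus-potential balance behind the second part of Lemma \ref{lem_en}: pairing \eqref{prelim_high} with $\frac12|\xi|^2$ and the interaction term with the continuity equation yields
\[
\frac{d}{dt}\lt(\mathscr{K}[f^\e] + \frac1\e\mathscr{P}[\rho^\e]\rt) = \frac d\e - \frac2\e\,\mathscr{K}[f^\e],
\]
where the normalization $\iint f^\e=1$ is used in the integration by parts of the friction term. Integrating in time and discarding the nonnegative endpoint quantities $\mathscr{K}[f^\e(T)]$ and $\frac1\e\mathscr{P}[\rho^\e(T)]$ gives $\int_0^T\mathscr{K}[f^\e]\,dt \le \frac{dT}2 + \frac\e2\mathscr{K}[f_0^\e] + \frac12\mathscr{P}[\rho_0^\e]$, which is stronger than, hence implies, the stated bound.

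The only genuinely delicate point---and the main obstacle---is that Lemma \ref{lem_en} and the balance above are derived for regular solutions, whereas $f^\e$ is only a weak entropy solution. For the first estimate this requires no extra work, since the inequality is built into Definition \ref{def_weak}(iii). For the kinetic-energy identity I would invoke the same regularization and lower-semicontinuity argument already used in the proof of Proposition \ref{re_diffu}: approximate $f^\e$ by smooth solutions, apply the balance, and pass to the limit, using the weak lower semicontinuity of $\mathscr{K}$ and $\mathscr{P}$ to guarantee that the discarded endpoint terms stay nonnegative in the limit. No analytic input beyond Lemma \ref{lem_en} and the weak-solution framework is needed.
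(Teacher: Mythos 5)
Your proposal is correct and follows essentially the same route as the paper, which obtains this lemma by specializing Lemma \ref{lem_en} (and Definition \ref{def_weak}(iii) for the weak-solution form of the total-energy inequality) to the scaling ${\rm A}={\rm B}=\e$, $\tau=1$. Your intermediate kinetic-energy bound $\int_0^T\mathscr{K}[f^\e]\,dt\le \frac{dT}{2}+\frac{\e}{2}\mathscr{K}[f_0^\e]+\frac12\mathscr{P}[\rho_0^\e]$ is in fact slightly sharper than the stated one, consistent with the computation inside the proof of Lemma \ref{lem_en}, and your remark on extending the identity from regular to weak entropy solutions by approximation matches the paper's treatment elsewhere.
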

 
The next step is to combine this with the relative entropy framework. 
In the high-field scaling, the key feature is that both the interaction energy $\mathscr{P}[\rho^\e|\rho]$ and the dissipation functional carry a prefactor $\frac1\e$. 
This enhances microscopic relaxation but simultaneously amplifies the contribution of the nonlocal force, making the error terms harder to control. 
The following proposition quantifies this balance.

\begin{proposition}\label{re_high} Let $T>0$ and $f^\e$ be a weak entropy solution to the equation \eqref{prelim_high} in the sense of Definition \ref{def_weak} on the time interval $[0,T]$,  and let $(\rho, {\rm u})$ be a classical solutions to the equation \eqref{lim_high} with $\nabla {\rm u} \in L^\infty((0,T) \times \R^d)$. Then we have
 \begin{align*}
& \sup_{0 \leq t \leq T}\lt( \mathscr{H}[f^\e | M_{\rho, {\rm u}}](t) + \frac1{\e} \mathscr{P}[\rho^\e | \rho](t) \rt)\cr
&\quad + \frac1{2\e} \int_0^T\iint_{\R^d \times \R^d} \frac1{f^\e} |\nabla_\xi f^\e + (\xi - u^\e)f^\e|^2 \,dxd\xi dt +  \lt( \frac1{2\e} - \|\nabla {\rm u}\|_{L^\infty} \rt)\int_0^T \intr \rho^\e |u^\e - {\rm u}|^2\,dx dt \cr
&\qquad \leq   \mathscr{H}[f^\e | M_{\rho, {\rm u}}](0) + \frac1{\e} \mathscr{P}[\rho^\e | \rho](0)  + C\e \lt(1 + \int_0^T\mathscr{K}[f^\e(t)]\,dt\rt)  + \frac C\e \int_0^T \mathscr{P}[\rho^\e | \rho](t)\,dt,
\end{align*}
where $C>0$ is independent of $\e > 0$.
\end{proposition}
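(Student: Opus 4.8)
The plan is to specialize Proposition \ref{prop_key} to the high-field scaling and then dispatch the resulting terms using Proposition \ref{prop_mod} for the nonlocal contribution and elementary Young-type estimates for the remainder. Writing the kinetic equation \eqref{prelim_high} in the form \eqref{eq:gkin} corresponds to the choice ${\rm A}={\rm B}=1$, $\tau=\e$, and $F[f^\e]=-\tfrac1\e\nabla(-\Delta)^{-\alpha}\rho^\e$, while dividing the momentum equation of \eqref{lim_high} by $\e$ identifies the macroscopic source as $R[\rho,{\rm u}]={\rm e}$, where $\rho{\rm e}=\rho\pa_t{\rm u}+\rho{\rm u}\cdot\nabla{\rm u}+\nabla\rho$ and ${\rm u}=-\nabla(-\Delta)^{-\alpha}\rho$. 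Since $F[f^\e]$ is independent of $\xi$, the term $\nabla_\xi\cdot F[f^\e]$ drops out. As in the proof of Proposition \ref{re_diffu}, the identity of Proposition \ref{prop_key}, though stated for regular solutions, is applied in its time-integrated inequality form to weak entropy solutions by approximation and lower semicontinuity.

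The decisive structural point I would exploit is a cancellation between the friction term $-\tfrac1\e\int_{\R^d}\rho^\e u^\e\cdot(u^\e-{\rm u})\,dx$ and the force term $-\int_{\R^d}\rho^\e({\rm u}-u^\e)\cdot(F[f^\e]-{\rm e})\,dx$. Expanding $u^\e\cdot(u^\e-{\rm u})=|u^\e-{\rm u}|^2+{\rm u}\cdot(u^\e-{\rm u})$ and using ${\rm u}=-\nabla(-\Delta)^{-\alpha}\rho$ to write $F[f^\e]-{\rm e}=-\tfrac1\e\nabla(-\Delta)^{-\alpha}(\rho^\e-\rho)+\tfrac1\e{\rm u}-{\rm e}$, the two copies of $\tfrac1\e\int_{\R^d}\rho^\e{\rm u}\cdot(u^\e-{\rm u})\,dx$ cancel. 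What remains is the coercive drag $-\tfrac1\e\int_{\R^d}\rho^\e|u^\e-{\rm u}|^2\,dx$, the nonlocal cross term $-\tfrac1\e\int_{\R^d}\rho^\e(u^\e-{\rm u})\cdot\nabla(-\Delta)^{-\alpha}(\rho^\e-\rho)\,dx$, and the consistency error $-\int_{\R^d}\rho^\e(u^\e-{\rm u})\cdot{\rm e}\,dx$. This cancellation is precisely what renders the nominally singular $\tfrac1\e$ drift harmless, exactly mirroring the $\tfrac1\e$ mechanism in the diffusive case.

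Next I would treat the three surviving terms. For the nonlocal term I apply Proposition \ref{prop_mod} with $(\rho,u)=(\rho^\e,u^\e)$, $(n,v)=(\rho,{\rm u})$, and ${\rm A}={\rm B}=1$ (both densities solve continuity equations with these coefficients), which after multiplication by $\tfrac1\e$ and integration in time yields $-\tfrac1\e\mathscr{P}[\rho^\e|\rho](t)+\tfrac1\e\mathscr{P}[\rho^\e|\rho](0)+\tfrac C\e\int_0^t\mathscr{P}[\rho^\e|\rho]\,ds$, the first contribution being moved to the left-hand side. For the error term I use Young's inequality $|\int_{\R^d}\rho^\e(u^\e-{\rm u})\cdot{\rm e}\,dx|\le\tfrac1{2\e}\int_{\R^d}\rho^\e|u^\e-{\rm u}|^2\,dx+\tfrac\e2\int_{\R^d}\rho^\e|{\rm e}|^2\,dx$, and bound $\int_{\R^d}\rho^\e|{\rm e}|^2\,dx\le\|{\rm e}\|_{L^\infty}^2\le C$ using the unit mass of $\rho^\e$ and the assumed regularity of $\rho$, which makes ${\rm e}=\pa_t{\rm u}+{\rm u}\cdot\nabla{\rm u}+\nabla\log\rho$ bounded. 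This absorbs exactly half of the drag, so that $-\tfrac1\e\int_{\R^d}\rho^\e|u^\e-{\rm u}|^2\,dx$ combines with the $+\|\nabla{\rm u}\|_{L^\infty}\int_{\R^d}\rho^\e|u^\e-{\rm u}|^2\,dx$ coming from the first right-hand term of Proposition \ref{prop_key} to produce the stated coefficient $-(\tfrac1{2\e}-\|\nabla{\rm u}\|_{L^\infty})$. Finally, the moment term $\tfrac\e2\|\nabla{\rm u}\|_{L^\infty}^2\|f^\e\|_{L^1_2}$ is rewritten via $\|f^\e\|_{L^1_2}=1+2\mathscr{K}[f^\e]$, giving the contribution $C\e(1+\int_0^T\mathscr{K}[f^\e]\,dt)$ after integration. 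Collecting all terms and taking the supremum over $t\in[0,T]$ delivers the asserted inequality.

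The main obstacle I anticipate is not any single estimate but the careful bookkeeping of the $\tfrac1\e$ prefactors: one must verify that the drag term $-\tfrac1\e\int_{\R^d}\rho^\e|u^\e-{\rm u}|^2\,dx$ is simultaneously large enough to be split as $\tfrac1{2\e}+\tfrac1{2\e}$, with one half absorbing the consistency error ${\rm e}$ and the other half surviving as the coercive term in the final estimate, while ensuring that the potential-energy error is genuinely controlled (it cannot merely be bounded, since it is amplified by $\tfrac1\e$). A secondary technical point is the rigorous passage of Proposition \ref{prop_key} from smooth to weak entropy solutions, for which I rely on the same approximation argument invoked in Proposition \ref{re_diffu}.
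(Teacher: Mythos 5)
Your proposal is correct and follows essentially the same route as the paper: specialize Proposition \ref{prop_key}, exploit the cancellation between the friction term and the $\tfrac1\e{\rm u}$ part of the force difference to extract the coercive drag $-\tfrac1\e\int\rho^\e|u^\e-{\rm u}|^2\,dx$, control the nonlocal cross term via Proposition \ref{prop_mod}, and absorb the consistency error by Young's inequality. The only difference is your normalization ${\rm A}={\rm B}=1$, $\tau=\e$, $F=-\tfrac1\e\nabla(-\Delta)^{-\alpha}\rho^\e$, $R={\rm e}$ versus the paper's ${\rm A}={\rm B}=\e$, $\tau=1$, $R=-\nabla(-\Delta)^{-\alpha}\rho-{\rm u}+\e{\rm e}$, which are equivalent parameterizations of the same equations and yield identical inequalities (up to the immaterial choice of $\tfrac1{2\e}$ versus $\tfrac1{4\e}$ in the Young split).
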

\begin{proof} The argument is analogous to the diffusive case (Proposition \ref{re_diffu}): 
the time-integrated form of Proposition \ref{prop_key} extends to weak entropy solutions 
by approximation, thus we may apply it here with parameters ${\rm A} = {\rm B} = \e$, ${\rm C} = 0$, $\tau = 1$ , and
\[
F[f^\e] = - \nabla (-\Delta)^{-\alpha} \rho^\e, \quad R[\rho, {\rm u}] = -  \nabla (-\Delta)^{-\alpha} \rho -   {\rm u} + \e  {\rm e}.
\]
This deduces
 \begin{align*}
&  \iint_{\R^d \times \R^d} f^\e \log \lt( \frac{f^\e}{ M_{\rho, {\rm u}}} \rt) dxd\xi + \frac1{2\e} \int_0^t\iint_{\R^d \times \R^d} \frac1{f^\e} |\nabla_\xi f^\e + (\xi - u^\e)f^\e|^2 dxd\xi ds\cr
&\quad \leq   \iint_{\R^d \times \R^d} f^\e_0 \log \lt( \frac{f^\e_0}{ M_{\rho_0, {\rm u}_0}} \rt) dxd\xi + \|\nabla {\rm u}\|_{L^\infty} \int_0^t \intr \rho^\e |u^\e - {\rm u}|^2\,dx ds  +  \frac\e2\|\nabla {\rm u}\|_{L^\infty}^2 \int_0^t  \|f^\e(s)\|_{L^1_2}\,ds   \cr
&\qquad - \frac1{\e} \int_0^t \intr \rho^\e u^\e \cdot (u^\e - {\rm u})\,dx ds +  \frac1\e \int_0^t \iint_{\R^d \times \R^d}  f^\e ({\rm u}-\xi) \cdot \lt(\nabla (-\Delta)^{-\alpha} (\rho^\e  - \rho)  -  {\rm u} + \e {\rm e}\rt) dxd\xi ds\cr
&\quad =  \iint_{\R^d \times \R^d} f^\e_0 \log \lt( \frac{f^\e_0}{ M_{\rho_0, {\rm u}_0}} \rt)  + \|\nabla {\rm u}\|_{L^\infty} \int_0^t\intr \rho^\e |u^\e - {\rm u}|^2\,dx ds +  \frac\e2\|\nabla {\rm u}\|_{L^\infty}^2 \int_0^t\|f^\e(s)\|_{L^1_2}\,ds   \cr
&\qquad  - \frac1{\e}\int_0^t\intr \rho^\e |u^\e - {\rm u}|^2\,dx ds - \int_0^t \intr \rho^\e(u^\e - {\rm u})\cdot {\rm e}\,dx ds \cr
&\qquad- \frac1\e \int_0^t \intr  \rho^\e (u^\e- {\rm u}) \cdot \nabla (-\Delta)^{-\alpha}(\rho^\e  - \rho) \,dx ds.
\end{align*}
Finally, the cross term $\into \rho^\e(u^\e-{\rm u})\cdot {\rm e}\,dx$ is bounded by
\[
\lt|\intr \rho^\e (u^\e - {\rm u}) \cdot {\rm e}\,dx\rt| \leq \frac1{4\e}\intr \rho^\e |u^\e - {\rm u}|^2\,dx + \e \| {\rm e}\|_{L^\infty}\|\rho^\e\|_{L^1},
\]
and Proposition \ref{prop_mod} controls the nonlocal contribution. 
This concludes the desired result.
\end{proof}
%
%
%
%
%
%
\subsubsection{Proof of Theorem \ref{thm_kin2}: high-field limit} We now complete the argument for the high-field scaling. In contrast to the diffusive case, the prefactors are only of order $\frac1\e$, and hence the available dissipation is weaker. 
Nevertheless, Proposition \ref{re_high} provides a differential inequality that is still coercive enough to control deviations from equilibrium, provided $\e$ is small and $\|\nabla {\rm u}\|_{L^\infty}$ remains bounded. 

Indeed, Proposition \ref{re_high} and Lemma \ref{en_high} yield, for sufficiently small $\e>0$ such that $4\e \|\nabla {\rm u}\|_{L^\infty}<1$,
\begin{align*}
& \sup_{0 \leq t \leq T}\lt( \mathscr{H}[f^\e | M_{\rho, {\rm u}}](t) + \frac1{\e} \mathscr{P}[\rho^\e | \rho](t) \rt)\cr
&\quad + \frac1{2\e} \int_0^T\iint_{\R^d \times \R^d} \frac1{f^\e} |\nabla_\xi f^\e + (\xi - u^\e)f^\e|^2 \,dxd\xi dt +  \frac1{4\e}\int_0^T \intr \rho^\e |u^\e - {\rm u}|^2\,dx dt \cr
&\qquad \leq \mathscr{H}[f^\e | M_{\rho, {\rm u}}](0) + \frac1\e \mathscr{P}[\rho^\e | \rho](0) + C \e +  \frac{C}{\e} \int_0^T \mathscr{P}[\rho^\e | \rho](t) \,dt.
\end{align*}
We finally apply Gr\"onwall's lemma to obtain the uniform stability bound
\begin{align}\label{con_high}
\begin{aligned}
&\sup_{0 \leq t \leq T}\lt( \mathscr{H}[f^\e | M_{\rho, {\rm u}}](t) + \frac1\e \mathscr{P}[\rho^\e | \rho](t) \rt) \cr
&\quad  + \frac1{2\e}\int_0^T \iint_{\R^d \times \R^d} \frac1{f^\e} |\nabla_\xi f^\e + (\xi - u^\e)f^\e|^2 dxd\xi dt  + \frac1{4\e} \int_0^T \intr \rho^\e |u^\e - {\rm u}|^2\,dx dt \cr
&\qquad \leq C\lt( \mathscr{H}[f^\e | M_{\rho, {\rm u}}](0) + \frac1\e \mathscr{P}[\rho^\e | \rho](0) + \e\rt).
\end{aligned}
\end{align}

\medskip
\noindent {\bf Case of well-prepared initial data.} In this setting, the initial modulated entropy and interaction energy are sufficiently small, so the estimate \eqref{con_high} implies strong convergence of both $f^\e$ and $\rho^\e$. Moreover, the momentum balance yields
 \[
 \intr |\rho^\e u^\e - \rho {\rm u}|\,dx \leq \lt(\intr \rho^\e|u^\e - {\rm u}|^2\,dx\rt)^\frac12 + \|{\rm u}\|_{L^\infty}\intr |\rho^\e - \rho|\,dx  \leq C \mathscr{H}[f^\e | M_{\rho, {\rm u}}]^\frac12,
 \]
so that
  \[
 \rho^\e u^\e \to -\rho\nabla (-\Delta)^{-\alpha} \rho \quad \mbox{in } L^\infty(0,T; L^1(\R^d)).
 \]
 
 \medskip
\noindent {\bf Case of mildly well-prepared initial data.}  When the initial data are only close in weak topology, the entropy control is not enough by itself, but the bounded Lipschitz estimates from Lemmas \ref{lem_gd} and \ref{lem_gd2} bridge this gap. It gives
\begin{align*}
&\sup_{0 \leq t \leq T} {\rm d}^2_{\rm BL}(\rho^\e(t), \rho(t))  + \int_0^t {\rm d}^2_{\rm BL}(\rho^\e u^\e, \rho {\rm u})\,ds + \int_0^t {\rm d}^2_{\rm BL}(f^\e, M_{\rho, {\rm u}})\,ds \cr
&\quad \leq C  {\rm d}^2_{\rm BL}(\rho^\e_0, \rho_0) + C\int_0^t\iint_{\R^d \times \R^d} \frac1{f^\e}|\nabla_\xi f^\e + (\xi - u^\e) f^\e|^2\,dxd\xi ds + C\int_0^t \intr \rho^\e |u^\e - {\rm u}|^2\,dxds.
\end{align*}
Combining this with the dissipation bounds from \eqref{con_high}, we obtain
\begin{align*}
&\sup_{0 \leq t \leq T} {\rm d}^2_{\rm BL}(\rho^\e(t), \rho(t))  + \int_0^t {\rm d}^2_{\rm BL}(\rho^\e u^\e, \rho {\rm u})\,ds + \int_0^t {\rm d}^2_{\rm BL}(f^\e, M_{\rho, {\rm u}})\,ds \cr
&\quad \leq C  {\rm d}^2_{\rm BL}(\rho^\e_0, \rho_0) + C\lt(\e \mathscr{H}[f^\e | M_{\rho, {\rm u}}](0) +   \mathscr{P}[\rho^\e | \rho](0) + \e^2\rt).
\end{align*}
From the above estimates, we conclude the convergence results stated in Theorem \ref{thm_kin2}.

%
%
%
%
%
%
\subsection{gSQG limit} 
We now consider the generalized Surface Quasi--Geostrophic (gSQG) scaling of \eqref{eq:gkin}. Recall
\bq\label{kin_sqg}
\e \pa_t f^\e + \xi \cdot \nabla_x f^\e  - \nabla (-\Delta)^{-\alpha} \rho^\e  \cdot \nabla_\xi  f^\e + \frac1\e \xi^\perp \cdot \nabla_\xi f^\e = \nabla_\xi \cdot (\nabla_\xi f^\e + \xi f^\e), \quad t>0, \ (x,\xi) \in \R^2 \times \R^2.
\eq
Our goal is to derive the gSQG equation
\[
\pa_t \rho -  \nabla \cdot (\rho \nabla^\perp (-\Delta)^{-\alpha} \rho) =0.
\]
The key new feature of this scaling is the presence of the singular rotation term $\tfrac1\e \xi^\perp \cdot \nabla_\xi f^\e$. At first sight this appears even more singular than in the high-field case, but in fact it induces a fast gyroscopic motion in velocity space that naturally averages out. More precisely, the combination of the $\frac1\e \xi^\perp$ drift and the Fokker--Planck friction/diffusion ensures that the limit dynamics is governed by an effective transport along the orthogonal direction $\nabla^\perp(-\Delta)^{-\alpha}\rho$. In this way, the gSQG regime differs from both the diffusive and high-field cases: while the diffusive limit requires delicate control of the scaled flux $\tfrac1\e \rho^\e u^\e$, and the high-field limit benefits from an $\e$-independent effective velocity, here the leading-order gyroscopic balance enforces that the limit velocity field is purely rotational. 

To see this structure, we rewrite the system in conservative form as
\begin{align}\label{sqg}
\begin{aligned}
&\e\pa_t \rho +  \nabla \cdot (\rho {\rm u_\e}) =0, \cr
&\e \pa_t (\rho {\rm u_\e}) +  \nabla \cdot (\rho {\rm u_\e} \otimes {\rm u_\e}) +\nabla \rho  = - \rho \nabla (-\Delta)^{-\alpha} \rho +  \frac1\e \rho {\rm u_\e}^\perp + \rho {\rm e}_\e,
\end{aligned}
\end{align}
with the remainder term
\[
\rho {\rm e}_\e = \e \rho \pa_t {\rm u_\e} +  \rho {\rm u_\e} \cdot \nabla {\rm u_\e}, \quad {\rm u_\e} = - \e\nabla^\perp (-\Delta)^{-\alpha} \rho - \e\nabla^\perp \log\rho. 
\]
Here, the actual transport vector field ${\rm u_\e}$ scales like $\e$ and the leading-order flux emerges only after combining with the $\frac1\e$-rotation. As a result, we obtain uniform bounds of the type
\[
\|{\rm u_\e}\|_{W^{1,\infty}} \leq C\e \quad \mbox{and} \quad \|{\rm e}_\e\|_{L^\infty} \leq C\e^2
\]
with $C>0$ independent of $\e>0$. These estimates will be used repeatedly in the entropy and stability analysis below.

\begin{lemma}\label{en_sqg}  Let $T>0$ and $f^\e$ be a weak entropy solution to the equation \eqref{kin_sqg} in the sense of Definition \ref{def_weak} on the time interval $[0,T]$. Then we have
\[
\sup_{0 \leq t \leq T}\lt( \mathscr{F}[f^\e(t)] + \mathscr{P}[\rho^\e(t)]\rt)  + \frac1{\e}\int_0^T \iint_{\R^2 \times \R^2} \frac1{f^\e}|\nabla_\xi f^\e + \xi f^\e|^2\,dxd\xi dt \leq \mathscr{F}[f_0^\e] + \mathscr{P}[\rho^\e_0]
\]
and
\[
\int_0^T \mathscr{K}[f^\e(t)]\,dt \leq dT + \e \lt(\mathscr{K}[f^\e_0]+ \mathscr{P}[\rho_0^\e]\rt).
\]
\end{lemma}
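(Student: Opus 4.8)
The plan is to reduce Lemma~\ref{en_sqg} to the general energy identity of Lemma~\ref{lem_en} by showing that the fast rotation term $\frac1\e\xi^\perp\cdot\nabla_\xi f^\e$ is \emph{invisible} to the free energy. The point is that \eqref{kin_sqg} is exactly the abstract kinetic equation \eqref{eq_aux} with ${\rm A}=\e$, ${\rm B}=\tau=1$, augmented only by the gyroscopic term $\frac1\e\xi^\perp\cdot\nabla_\xi f^\e$. Hence it suffices to check that this extra term contributes nothing when the equation is tested against the densities generating the kinetic energy and the Boltzmann entropy, namely $\frac{|\xi|^2}{2}$ and $\log f^\e$; once this is established, every step in the proof of Lemma~\ref{lem_en} carries over unchanged.

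First I would verify the two elementary cancellations, both of which rely on the two-dimensional identities $\xi\cdot\xi^\perp=0$ and $\nabla_\xi\cdot\xi^\perp=0$. Testing against $\frac{|\xi|^2}{2}$ and integrating by parts in $\xi$ produces $-\frac1\e\iint f^\e\,\nabla_\xi\cdot\bigl(\frac{|\xi|^2}{2}\xi^\perp\bigr)\,dxd\xi$; since $\nabla_\xi\bigl(\frac{|\xi|^2}{2}\bigr)\cdot\xi^\perp=\xi\cdot\xi^\perp=0$ and $\nabla_\xi\cdot\xi^\perp=0$, this vanishes, so the kinetic energy balance is unaffected. Testing against $\log f^\e$ and writing $\log f^\e\,\nabla_\xi f^\e=\nabla_\xi(f^\e\log f^\e-f^\e)$ gives $-\frac1\e\iint(f^\e\log f^\e-f^\e)\,\nabla_\xi\cdot\xi^\perp\,dxd\xi=0$ for the same reason, so the entropy balance is likewise untouched. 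The nonlocal interaction term requires no comment, since the rotation operator carries no $x$-derivative and therefore does not enter the balance of $\mathscr{P}[\rho^\e]$.

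Having removed the rotation term, I would invoke Lemma~\ref{lem_en} with ${\rm A}=\e$, ${\rm B}=\tau=1$. The total energy identity becomes $\mathscr{F}[f^\e(t)]+\mathscr{P}[\rho^\e(t)]+\frac1\e\int_0^t\mathscr{D}[f^\e]\,ds=\mathscr{F}[f^\e_0]+\mathscr{P}[\rho^\e_0]$, and taking the supremum in $t$ yields the first displayed inequality with the stated dissipation prefactor $\frac1\e$. The kinetic-energy bound of Lemma~\ref{lem_en} specializes to $\int_0^T\mathscr{K}[f^\e]\,dt\le dT+\e\mathscr{K}[f^\e_0]+\e\mathscr{P}[\rho^\e_0]=dT+\e\bigl(\mathscr{K}[f^\e_0]+\mathscr{P}[\rho^\e_0]\bigr)$, which is the second assertion.

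The one point requiring care — and the step I expect to be the main, if mild, obstacle — is the passage from regular solutions to weak entropy solutions, since Lemma~\ref{lem_en} is proved for classical $f$. As in the proof of Proposition~\ref{re_diffu}, I would justify both inequalities by a standard approximation argument: establish the identities for smooth approximations, then pass to the limit using the lower semicontinuity of the free energy and of the dissipation functional $\mathscr{D}$, together with the weak stability of the interaction energy. In fact the first inequality is simply the total energy inequality built into Definition~\ref{def_weak} once the two algebraic cancellations above are recorded, so no genuinely new estimate beyond these cancellations is needed.
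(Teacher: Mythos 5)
Your proposal is correct and follows essentially the same route as the paper: the paper's proof consists precisely of noting that $\xi\cdot\xi^\perp=0$ and $\nabla_\xi\cdot\xi^\perp=0$ make the rotation term invisible to the kinetic energy and entropy balances, after which the result follows from Lemma~\ref{lem_en} with ${\rm A}=\e$, ${\rm B}=\tau=1$. Your additional remarks on the $x$-independence of the rotation operator and on the approximation argument for weak entropy solutions are consistent with how the paper handles these points elsewhere.
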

\begin{proof} The additional skew term $\xi^\perp$ does not affect the dissipation, since $\xi \cdot \xi^\perp = 0$ and $\nabla_\xi \cdot \xi^\perp = 0$. Hence, the entropy and kinetic energy balances follow directly from the arguments in Lemma \ref{lem_en}.
\end{proof}

The preceding entropy and energy bounds ensure that the kinetic system remains uniformly controlled in time, despite the presence of the singular drift induced by the $\xi^\perp$ term. 
To pass from these a priori controls to a quantitative stability estimate, we combine the relative entropy framework with modulated interaction energies, in complete analogy with the diffusive and high--field regimes. 
The key difference is that the skew-symmetric rotation encoded in $\xi^\perp$ introduces no dissipation, but its contribution can still be handled thanks to the uniform $W^{1,\infty}$ bound on the effective velocity ${\rm u_\e}$ and the smallness of the error term ${\rm e}_\e$. 
This leads to the following estimate.

\begin{proposition}
Let $T>0$ and $f^\e$ be a weak entropy solution to the equation \eqref{kin_sqg} in the sense of Definition \ref{def_weak} on the time interval $[0,T]$,  and let $(\rho, {\rm u_\e})$ be a classical solutions to the equation \eqref{sqg} with $\nabla {\rm u_\e} \in L^\infty((0,T) \times \R^2)$. Then we have
\begin{align*}
& \sup_{0 \leq t \leq T}\lt( \mathscr{H}[f^\e | M_{\rho, {\rm u_\e}}](t) + \mathscr{P}[\rho^\e | \rho](t) \rt)\cr
&\quad + \frac1{8\e} \int_0^T \iint_{\R^2 \times \R^2} \frac1{f^\e} |\nabla_\xi f^\e + (\xi - u^\e)f^\e|^2 dxd\xi dt + \frac1{8\e} \int_0^T \int_{\R^2} \rho^\e |u^\e - {\rm u_\e}|^2 \,dx dt \cr
&\qquad \leq \mathscr{H}[f^\e | M_{\rho, {\rm u_\e}}](0) + \mathscr{P}[\rho^\e | \rho](0) + C\e \lt(1 + \int_0^T\mathscr{K}[f^\e(t)]\,dt\rt)   + C\int_0^T \mathscr{P}[\rho^\e | \rho](t)\,dt,
\end{align*}
where $C>0$ is independent of $\e > 0$.
\end{proposition}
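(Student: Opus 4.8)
The plan is to follow the same template as the diffusive and high-field cases (Propositions \ref{re_diffu} and \ref{re_high}): apply the relative entropy identity of Proposition \ref{prop_key} in its time-integrated form, which extends to weak entropy solutions by the same approximation argument already invoked there, and then absorb the resulting error terms using the modulated interaction energy estimate of Proposition \ref{prop_mod}. Here I would take the parameters ${\rm A}=\e$, ${\rm B}=1$, $\tau=1$, comparison Maxwellian $M_{\rho,{\rm u_\e}}$, and encode the rotation into the force field by writing $F[f^\e]=-\nabla(-\Delta)^{-\alpha}\rho^\e+\frac1\e\xi^\perp$ (legitimate since $\nabla_\xi\cdot(\frac1\e\xi^\perp f^\e)=\frac1\e\xi^\perp\cdot\nabla_\xi f^\e$), together with the source $R[\rho,{\rm u_\e}]=-\nabla(-\Delta)^{-\alpha}\rho+\frac1\e{\rm u_\e}^\perp+{\rm e}_\e$ read off from \eqref{sqg}. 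Because $\nabla_\xi\cdot\xi^\perp=0$ in two dimensions, the force-divergence term $-\frac1{\rm A}\iint(\nabla_\xi\cdot F[f^\e])f^\e$ vanishes identically, so only the coupling term $-\frac1\e\iint f^\e({\rm u_\e}-\xi)\cdot(F[f^\e]-R[\rho,{\rm u_\e}])$ remains to be analyzed, with $F[f^\e]-R[\rho,{\rm u_\e}]=-\nabla(-\Delta)^{-\alpha}(\rho^\e-\rho)+\frac1\e(\xi-{\rm u_\e})^\perp-{\rm e}_\e$.

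The decisive point --- and the feature that distinguishes the gSQG scaling from the others --- is the exact cancellation of the rotational contribution, which is a priori the most singular term, of order $\frac1{\e^2}$. Its contribution is $-\frac1{\e^2}\iint f^\e({\rm u_\e}-\xi)\cdot(\xi-{\rm u_\e})^\perp\,dxd\xi$, and setting $w:=\xi-{\rm u_\e}$ the integrand equals $-w\cdot w^\perp=0$ pointwise, since every planar vector is orthogonal to its own rotation by $\tfrac\pi2$. Thus the entire $\frac1{\e^2}$ term drops out before any estimation, with no reliance on the Maxwellian structure; this is the kinetic counterpart of the identity $\nabla\cdot\nabla^\perp\equiv0$ that the introduction flagged as the mechanism behind the gSQG limit. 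I would stress that this is the crux of the argument: had the cancellation been merely approximate, no amount of dissipation could control a $\frac1{\e^2}$ remainder.

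With the rotation gone, the remaining terms are handled as before. The nonlocal term reduces to $-\frac1\e\int\rho^\e(u^\e-{\rm u_\e})\cdot\nabla(-\Delta)^{-\alpha}(\rho^\e-\rho)\,dx$, to which I would apply Proposition \ref{prop_mod} with $(\rho,u)=(\rho^\e,u^\e)$, $(n,v)=(\rho,{\rm u_\e})$; the seemingly dangerous $\frac1\e$ prefactor is neutralized because the constant there is proportional to $\|\nabla{\rm u_\e}\|_{L^\infty}\le C\e$, so after time integration this produces exactly $-\mathscr{P}[\rho^\e|\rho](t)+\mathscr{P}[\rho^\e|\rho](0)+C\int_0^t\mathscr{P}[\rho^\e|\rho]\,ds$. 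The Fokker--Planck friction term $-\frac1\e\int\rho^\e u^\e\cdot(u^\e-{\rm u_\e})\,dx$ splits into the coercive $-\frac1\e\int\rho^\e|u^\e-{\rm u_\e}|^2\,dx$ and a remainder $-\frac1\e\int\rho^\e{\rm u_\e}\cdot(u^\e-{\rm u_\e})\,dx$; using $\|{\rm u_\e}\|_{L^\infty}\le C\e$ and Young's inequality the latter is bounded by $\frac1{8\e}\int\rho^\e|u^\e-{\rm u_\e}|^2\,dx+C\e$, and the ${\rm e}_\e$-term is absorbed similarly via $\|{\rm e}_\e\|_{L^\infty}\le C\e^2$ into $\frac1{8\e}\int\rho^\e|u^\e-{\rm u_\e}|^2\,dx+C\e^3$. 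Finally, the quadratic term from Proposition \ref{prop_key}, namely $\frac1{2\e}\|\nabla{\rm u_\e}\|_{L^\infty}^2\|f^\e\|_{L^1_2}$, becomes $C\e(1+\mathscr{K}[f^\e])$ since $\|f^\e\|_{L^1_2}=1+2\mathscr{K}[f^\e]$, while the term $\frac1\e\|\nabla{\rm u_\e}\|_{L^\infty}\int\rho^\e|u^\e-{\rm u_\e}|^2$ carries an $O(1)$ coefficient that is harmless against the $\frac1\e$ coercivity for $\e$ small.

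Collecting these bounds, the coefficient of $\frac1\e\int\rho^\e|u^\e-{\rm u_\e}|^2$ remains negative and at least $\frac1{8\e}$ in magnitude after all absorptions (for $\e$ small), and the Fisher dissipation $\frac1{2\e}\iint\frac1{f^\e}|\nabla_\xi f^\e+(\xi-u^\e)f^\e|^2$ is untouched, so one may safely retain $\frac1{8\e}$ of it. Writing the estimate in differential form $\frac{d}{dt}\bigl(\mathscr{H}[f^\e|M_{\rho,{\rm u_\e}}]+\mathscr{P}[\rho^\e|\rho]\bigr)+\frac1{8\e}(\text{Fisher}+\text{velocity defect})\le C\mathscr{P}[\rho^\e|\rho]+C\e(1+\mathscr{K}[f^\e])$, integrating from $0$ to $t$, and taking the supremum over $t\in[0,T]$ then yields the stated inequality, with the $C\int_0^T\mathscr{P}[\rho^\e|\rho]\,dt$ term kept on the right for the Gr\"onwall step carried out in the proof of Theorem \ref{thm_kin3}. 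The main obstacle is conceptual rather than computational: recognizing and exploiting the exact pointwise cancellation of the $\frac1{\e^2}$ rotation term, and then tracking the two distinct $\frac1\e$-prefactors (on the nonlocal force and on the friction/dissipation) so that the smallness $\|{\rm u_\e}\|_{W^{1,\infty}}\le C\e$ and $\|{\rm e}_\e\|_{L^\infty}\le C\e^2$ turns every error into an $O(\e)$ or absorbable quantity.
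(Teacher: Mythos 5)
Your proposal is correct and follows essentially the same route as the paper: Proposition \ref{prop_key} with ${\rm A}=\e$, ${\rm B}=\tau=1$, the force $F[f^\e]=-\nabla(-\Delta)^{-\alpha}\rho^\e+\frac1\e\xi^\perp$ and source read off from \eqref{sqg}, the exact pointwise cancellation of the $\frac1{\e^2}$ rotation term via $w\cdot w^\perp=0$, absorption of the friction and ${\rm e}_\e$ remainders using $\|{\rm u_\e}\|_{W^{1,\infty}}\le C\e$ and $\|{\rm e}_\e\|_{L^\infty}\le C\e^2$, and Proposition \ref{prop_mod} for the nonlocal term with the $\frac1\e$ prefactor neutralized by the $O(\e)$ size of $\|\nabla{\rm u_\e}\|_{L^\infty}$. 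Your explicit identification of the rotational cancellation as the crux, and of why the modulated-energy constant does not blow up, matches the mechanism the paper uses (the paper leaves both points largely implicit).
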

\begin{proof}
As in Propositions \ref{re_diffu} and \ref{re_high}, the time-integrated version of Proposition \ref{prop_key} remains valid for weak entropy solutions by approximation. We therefore apply it here with ${\rm A} = \e$, ${\rm B} = \tau = 1$, ${\rm C} = 0$,  
\[
F[f^\e] = - \nabla (-\Delta)^{-\alpha} \rho^\e +  \frac1\e \xi^\perp\quad \mbox{and} \quad R[\rho, {\rm u_\e}] =  -    \nabla (-\Delta)^{-\alpha} \rho - \frac1{\e} {\rm u_\e}^\perp +  {\rm e}_\e
\]
to obtain
 \begin{align*}
&  \iint_{\R^2 \times \R^2} f^\e \log \lt( \frac{f^\e}{ M_{\rho, {\rm u_\e}}} \rt)  dxd\xi + \frac1{2\e} \int_0^t \iint_{\R^2 \times \R^2} \frac1{f^\e} |\nabla_\xi f^\e + (\xi - u^\e)f^\e|^2 dxd\xi  ds\cr
&\quad \leq  \iint_{\R^2 \times \R^2} f^\e_0 \log \lt( \frac{f^\e_0}{ M_{\rho_0, {\rm u_{\e0}}}} \rt) dxd\xi  + \frac1\e \int_0^t \|\nabla {\rm u_\e}\|_{L^\infty} \int_{\R^2} \rho^\e |u^\e - {\rm u_\e}|^2\,dx ds  \cr
&\qquad + \frac{1}{2\e} \|\nabla {\rm u_\e}\|_{L^\infty}^2\int_0^t \|f^\e(s)\|_{L^1_2}\,ds - \frac1{\e}\int_0^t \int_{\R^2} \rho^\e u^\e \cdot (u^\e - {\rm u_\e})\,dx ds  \cr
&\qquad - \frac1{\e}\int_0^t\iint_{\R^2 \times \R^2}  f^\e ({\rm u_\e}-\xi) \cdot \lt(- \nabla (-\Delta)^{-\alpha} ( \rho^\e - \rho) + \frac1\e (\xi  - {\rm u_\e})^\perp -  {\rm e}_\e\rt) dxd\xi ds.
\end{align*}
The third term on the right-hand side contains the key dissipation contribution.  It can be rewritten as
 \begin{align*}
- \frac1{\e} \int_{\R^2} \rho^\e u^\e \cdot (u^\e - {\rm u_\e})\,dx &= - \frac1\e \int_{\R^2} \rho^\e |u^\e - {\rm u_\e}|^2\,dx - \frac1\e \int_{\R^2} \rho^\e {\rm u_\e} \cdot (u^\e - {\rm u_\e})\,dx\cr
&\leq -\frac1{2\e} \int_{\R^2} \rho^\e |u^\e - {\rm u_\e}|^2 \,dx + \frac1\e \|{\rm u_\e}\|_{L^\infty}^2 \|f^\e\|_{L^1}.
\end{align*}
Next, the last term simplifies to
\[
   - \frac1\e\int_{\R^2}  \rho^\e (u^\e- {\rm u_\e}) \cdot \nabla (-\Delta)^{-\alpha}(\rho^\e  - \rho) \,dx  - \frac1\e\int_{\R^2} \rho^\e(u^\e - {\rm u_\e})\cdot {\rm e}_\e\,dx,
\]
and the contribution involving ${\rm e}_\e$ is controlled as
\[
\frac1\e \lt|\int_{\R^2} \rho^\e(u^\e - {\rm u_\e})\cdot {\rm e}_\e\,dx\rt| \leq C\e \int_{\R^2} \rho^\e |u^\e - {\rm u_\e}|\,dx \leq \frac1{8\e} \int_{\R^2} \rho^\e |u^\e - {\rm u_\e}|^2 \,dx + C\e^2.
\]
This shows that the error terms remain negligible compared to the main coercive contributions.

Thus, for sufficiently small $\e>0$, an application of Proposition \ref{prop_mod} allows us to incorporate the nonlocal interaction term, yielding
 \begin{align*}
 \begin{aligned}
&     \iint_{\R^2 \times \R^2} f^\e \log \lt( \frac{f^\e}{ M_{\rho, {\rm u_\e}}} \rt) dxd\xi + \frac1{2}\int_{\R^2} (\rho^\e - \rho) (-\Delta)^{-\alpha} (\rho^\e - \rho)\,dx  \cr
&\quad + \frac1{2\e}\int_0^t \iint_{\R^2 \times \R^2} \frac1{f^\e} |\nabla_\xi f^\e + (\xi - u^\e)f^\e|^2 dxd\xi ds + \frac1{4\e}\int_0^t \int_{\R^2} \rho^\e |u^\e - {\rm u_\e}|^2 \,dx ds  \cr
&\qquad \leq  \iint_{\R^2 \times \R^2} f^\e_0 \log \lt( \frac{f^\e_0}{ M_{\rho_0, {\rm u_{\e0}}}} \rt) dxd\xi  + \frac1{2}\int_{\R^2} (\rho^\e_0 - \rho_0) (-\Delta)^{-\alpha} (\rho^\e_0 - \rho_0)\,dx \cr
&\qquad \quad + C\e \lt(1 +\int_0^t \|f^\e(s)\|_{L^1_2}\,ds\rt)   + C\int_0^t \int_{\R^2} (\rho^\e - \rho) (-\Delta)^{-\alpha} (\rho^\e - \rho)\,dx ds. 
\end{aligned}
\end{align*}
This completes the proof.
\end{proof}

%
%
%
%
%
%

\subsubsection{Proof of Theorem \ref{thm_kin3}}  Using the kinetic energy bound in Lemma \ref{en_sqg} and applying Gr\"onwall's lemma provide the quantitative control
\begin{align*}
& \sup_{0 \leq t \leq T}\lt( \mathscr{H}[f^\e | M_{\rho, {\rm u_\e}}](t) + \mathscr{P}[\rho^\e | \rho](t) \rt)\cr
&\quad + \frac1{4\e}  \int_0^T \iint_{\R^2 \times \R^2} \frac1{f^\e} |\nabla_\xi f^\e + (\xi - u^\e)f^\e|^2 dxd\xi dt + \frac1{4\e} \int_0^T \int_{\R^2} \rho^\e |u^\e - {\rm u_\e}|^2 \,dx dt \cr
&\quad \leq  \mathscr{H}[f^\e | M_{\rho, {\rm u_\e}}](0) +   \mathscr{P}[\rho^\e | \rho](0) + C\e.
\end{align*}
Furthermore, by using the same argument as in Section \ref{ssec_diff}, we deduce
\begin{align}\label{con_sqg}
\begin{aligned}
&\sup_{0 \leq t \leq T}\lt( \mathscr{H}[f^\e | M_{\rho, 0}](t) + \mathscr{P}[\rho^\e | \rho](t) \rt) \cr
&\quad + \frac1{4\e}  \int_0^T  \iint_{\R^2 \times \R^2} \frac1{f^\e} |\nabla_\xi f^\e + (\xi - u^\e)f^\e|^2 dxd\xi dt + \frac1{4\e} \int_0^T  \int_{\R^2} \rho^\e |u^\e - {\rm u_\e}|^2 \,dx dt \cr
&\quad \leq C\lt( \mathscr{H}[f^\e | M_{\rho, 0}](0) + \mathscr{P}[\rho^\e | \rho](0)\rt) + C\e
\end{aligned}
\end{align}
for some $C>0$ independent of $\e$.

From the uniform bound \eqref{con_sqg}, we immediately obtain the strong convergence of both $f^\e$ and $\rho^\e$. To refine this into a quantitative estimate for the flux, we claim that
\begin{align*}
&{\rm d}_{\rm BL}^T \lt(\frac1\e \rho^\e u^\e, -\rho \nabla^\perp (-\Delta)^{-\alpha}\rho  - \nabla^\perp \rho \rt)\cr
&\quad \leq C \lt( \sqrt{\mathscr{H}[f^\e | M_{\rho, 0}](0)} + \sqrt{\mathscr{P}[\rho^\e | \rho](0)}  + \sqrt \e\rt) + C \lt(\mathscr{H}[f^\e | M_{\rho, 0}](0) +  \mathscr{P}[\rho^\e | \rho](0)  +  \e\rt),
\end{align*}
where the right-hand side involves only the initial perturbations and $\e$.  

Indeed, to prove this claim, we test against any $\phi \in W^{1,\infty}((0,T)\times \R^2)$ and expand
\[
\frac1\e \int_0^T\int_{\R^2} (\rho^\e u^\e - \rho {\rm u_\e})\cdot \phi \,dxdt = \frac1\e \int_0^T\int_{\R^2} \rho^\e (u^\e - {\rm u_\e})\cdot \phi \,dxdt + \int_0^T\int_{\R^2} (\rho^\e -\rho)\frac {\rm u_\e}\e \cdot \phi\,dxdt.
\]
Here, the second term on the right-hand side is more straightforward to control: by boundedness of $u$ and standard $L^1$ estimates on $\rho^\e-\rho$, we obtain
\begin{align*}
\int_0^T\int_{\R^2} (\rho^\e -\rho)\frac {\rm u_\e}\e \cdot \phi\,dxdt &\leq \|\phi\|_{L^\infty}\|\nabla^\perp (-\Delta)^{-\alpha} \rho   + \nabla^\perp \log\rho \|_{L^\infty}\int_0^T \|(\rho^\e - \rho)(t)\|_{L^1}\,dt\cr
&\leq C\|\phi\|_{L^\infty}\lt( \sqrt{\mathscr{H}[f^\e | M_{\rho, 0}](0)} + \sqrt{\mathscr{P}[\rho^\e | \rho](0)}  + \sqrt \e\rt).
\end{align*}

For the first term, which contains the main difficulty, we decompose it carefully into four pieces:
\begin{align*}
&\frac1\e \int_0^T\int_{\R^2} \rho^\e (u^\e - {\rm u_\e})\cdot \phi \,dxdt \\
&\quad = \frac1\e \int_0^T\int_{\R^2} \rho^\e (u^\e + \e \nabla^\perp (-\Delta)^{-\alpha}\rho^\e )\cdot \phi\,dxdt  - \int_0^T\int_{\R^2} \rho^\e \nabla^\perp(-\Delta)^{-\alpha}(\rho^\e-\rho) \cdot \phi\,dxdt\\
&\qquad   + \int_0^T \int_{\R^2} \rho^\e \nabla^\perp \log \rho \cdot\phi\,dxdt  \\
&\quad = \int_0^T \iint_{\R^2 \times \R^2} f^\e \lt( \frac{\xi^\perp}{\e} - \nabla(-\Delta)^{-\alpha}\rho^\e\rt) \cdot \phi^\perp\,dxd\xi dt + \int_0^T\int_{\R^2} (\rho^\e -\rho)\phi^\perp \cdot \nabla(-\Delta)^{-\alpha}(\rho^\e-\rho)\,dxdt \\
&\qquad +\int_0^T \int_{\R^2} \rho\phi^\perp \cdot \nabla(-\Delta)^{-\alpha}(\rho^\e -\rho)\,dxdt  + \int_0^T \int_{\R^2} \rho^\e \nabla^\perp \log \rho \cdot\phi\,dxdt  \\
&\quad =: I +II +III + IV.
\end{align*}

The first term $I$ is the most delicate, and by invoking the kinetic equation \eqref{kin_sqg}, we further split it into three pieces:
\begin{align*}
I &=  -\int_0^T  \iint_{\R^2 \times \R^2} \nabla_\xi \cdot \lt(f^\e \lt( \frac{\xi^\perp}{\e} - \nabla(-\Delta)^{-\alpha}\rho^\e\rt) \rt)\xi\cdot \phi^\perp\,dxd\xi dt\cr
&= \e \int_0^T  \iint_{\R^2 \times \R^2} \pa_t f^\e \xi \cdot \phi^\perp\,dxd\xi dt -\int_0^T  \iint_{\R^2 \times \R^2}  f^\e \xi\otimes \xi :\nabla \phi^\perp \,dxd\xi dt\\
&\quad +\int_0^T  \iint_{\R^2 \times \R^2} (\nabla_\xi f^\e + \xi f^\e)\cdot \phi^\perp\,dxd\xi dt \cr
&=: I_1 + I_2 + I_3,
\end{align*}
where each term can be treated using the entropy bound, moment estimates, and dissipation control as
\begin{align*}
I_1  &= \e \iint_{\R^2 \times \R^2} \xi f^\e(T) \cdot \phi^\perp(T)\,dxd\xi - \e\iint_{\R^2 \times \R^2} \xi f_0^\e \cdot \phi_0^\perp\,dxd\xi - \e \int_0^T  \iint_{\R^2 \times \R^2} \xi f^\e \cdot \pa_t \phi^\perp \,dxd\xi dt\\
&\le  \e \|\phi\|_{L^\infty}\int_{\R^2} \rho^\e |u^\e - {\rm u_\e}|\,dx\bigg|_{t=T} +C \e \|{\rm u_\e}\|_{L^\infty}\|\phi\|_{L^\infty} + \e \|\phi\|_{L^\infty}\int_{\R^2} \rho_0^\e |u_0^\e - {\rm u_{\e0}}|\,dx +C \e \|{\rm u_{\e0}}\|_{L^\infty}\|\phi\|_{L^\infty}\\
&\quad + \e \|\pa_t \phi\|_{L^\infty}\int_0^T \int_{\R^2} \rho^\e |u^\e - {\rm u_\e}|\,dxdt + C \e \|{\rm u_\e}\|_{L^\infty}\|\pa_t \phi\|_{L^\infty}\\
&\le C\|\phi\|_{W^{1,\infty}((0,T)\times\R^2)}( \sqrt{\mathscr{H}[f^\e | M_{\rho, 0}](0)} + \sqrt{\mathscr{P}[\rho^\e | \rho](0)}  + \sqrt \e + \e)\e,
\end{align*}
\begin{align*}
I_2 &= -\int_0^T  \iint_{\R^2 \times \R^2} f^\e (\xi \otimes \xi -\mathbb{I}_d) : \nabla \phi^\perp\,dxd\xi dt +\int_0^T \int_{\R^2} \rho^\e \nabla^\perp \cdot  \phi\,dxdt\\
&= -\int_0^T \iint_{\R^2 \times \R^2} (\xi f^\e +\nabla_\xi f^\e)\otimes \xi : \nabla \phi^\perp\,dxd\xi dt +\int_0^T \int_{\R^2} \rho^\e \nabla^\perp \cdot  \phi\,dxdt\\
&\le \|\phi\|_{W^{1,\infty}((0,T)\times\R^2)} \lt(\int_0^T \iint_{\R^2 \times \R^2} \frac{1}{f^\e}|\nabla_\xi f^\e +\xi f^\e|^2\,dxd\xi dt \rt)^{1/2} \lt(\int_0^T \iint_{\R^2 \times \R^2} |\xi|^2f^\e \,dxd\xi dt \rt)^{1/2} \\
&\quad + \int_0^T \int_{\R^2} (\rho^\e -\rho)\nabla^\perp\cdot \phi\,dxdt + \int_0^T \int_{\R^2} \rho \nabla^\perp \cdot \phi\,dxdt\\
&\le C\lt(\sqrt\e\lt( \sqrt{\mathscr{H}[f^\e | M_{\rho, 0}](0)} + \sqrt{\mathscr{P}[\rho^\e | \rho](0)}  + \sqrt \e\rt)  + \sup_{0\le t \le T}\|\rho^\e - \rho\|_{L^1}\rt)\|\phi\|_{W^{1,\infty}((0,T)\times\R^2)} \cr
&\quad - \int_0^T \int_{\R^2} \rho \nabla^\perp \log\rho \cdot \phi\,dxdt\cr
&\leq C\|\phi\|_{W^{1,\infty}((0,T)\times\R^2)}\lt( \sqrt{\mathscr{H}[f^\e | M_{\rho, 0}](0)} + \sqrt{\mathscr{P}[\rho^\e | \rho](0)}  + \sqrt \e\rt) - \int_0^T \int_{\R^2} \rho \nabla^\perp \log\rho \cdot \phi\,dxdt,
\end{align*}
and
\begin{align*}
I_3 &\le \|\phi\|_{L^\infty} \lt(\int_0^T  \iint_{\R^2 \times \R^2} \frac{1}{f^\e}|\nabla_\xi f^\e +\xi f^\e|^2\,dxd\xi dt\rt)^{1/2}\cr
& \le C\|\phi\|_{W^{1,\infty}((0,T)\times\R^2)}\sqrt\e\lt( \sqrt{\mathscr{H}[f^\e | M_{\rho, 0}](0)} + \sqrt{\mathscr{P}[\rho^\e | \rho](0)}  + \sqrt \e\rt) .
\end{align*}
Here we used
\begin{align*}
&\int_0^T\iint_{\R^2 \times \R^2} \frac{1}{f^\e}|\nabla_\xi f^\e +\xi f^\e|^2\,dxd\xi dt \cr
&\quad = \int_0^T\iint_{\R^2 \times \R^2} \frac{1}{f^\e}|\nabla_\xi f^\e +(\xi - u^\e) f^\e|^2\,dxd\xi dt + \int_0^T \int_{\R^2} \rho^\e |u^\e|^2\,dxdt\cr
&\quad \leq \int_0^T\iint_{\R^2 \times \R^2} \frac{1}{f^\e}|\nabla_\xi f^\e +(\xi - u^\e) f^\e|^2\,dxd\xi dt + 2 \int_0^T \int_{\R^2} \rho^\e |u^\e - {\rm u_\e}|^2\,dxdt +  2\int_0^T \int_{\R^2} \rho^\e|{\rm u_\e}|^2\,dxdt\cr
&\quad \leq C\e\lt( \mathscr{H}[f^\e | M_{\rho, 0}](0) + \mathscr{P}[\rho^\e | \rho](0)\rt) + C\e^2
\end{align*}
for some $C>0$ independent of $\e>0$.

For $II$ and $III$, we estimate
\begin{align*}
II &\le \|\nabla \phi\|_{L^\infty}\int_0^T \|\rho^\e - \rho\|_{\dot{H}^{-\alpha}}^2\,dt \le C\|\phi\|_{W^{1,\infty}((0,T)\times\R^2)}\lt(\mathscr{H}[f^\e | M_{\rho, 0}](0) +  \mathscr{P}[\rho^\e | \rho](0)  +  \e\rt)
\end{align*}
and
\[\begin{aligned}
III &\le \int_0^T\|\Lambda^{1-\alpha}(\rho\phi^\perp)\|_{L^2}\|(\rho^\e - \rho)(t)\|_{\dot{H}^{-\alpha}}\,dt \cr
&\le C\|\phi\|_{W^{1,\infty}((0,T)\times\R^2)}\lt( \sqrt{\mathscr{H}[f^\e | M_{\rho, 0}](0)} + \sqrt{\mathscr{P}[\rho^\e | \rho](0)}  + \sqrt \e\rt),
\end{aligned}\]
where we used the Kato-Ponce type inequality in \cite{L19} to get
\[
\begin{aligned}
\|\Lambda^{1-\alpha}(\rho\phi^\perp)\|_{L^2} &\le \|\Lambda^{1-\alpha}(\rho\phi^\perp) -   (\Lambda^{1-\alpha}\rho) \phi^\perp\|_{L^2} + \|(\Lambda^{1-\alpha}\rho)\phi^\perp\|_{L^2}\\
&\le C\|\rho\|_{L^2}\|\Lambda^{1-\alpha}\phi^\perp\|_{L^\infty} + C\|\rho\|_{\dot{H}^{1-\alpha}}\|\phi\|_{L^\infty}\\
&\le C\|\rho\|_{L^\infty(0,T;H^{1-\alpha})}\|\phi\|_{W^{1,\infty}((0,T)\times\R^2)}.
\end{aligned}
\]
We now combinine the above estimates, together with taking $\|\phi\|_{W^{1,\infty}((0,T)\times\R^2)} \le1$ and $\e \leq 1$  to obtain
\begin{align*}
&\frac1\e \lt|\int_0^T\int_{\R^2} \rho^\e (u^\e - {\rm u_\e})\cdot \phi \,dxdt\rt| \cr
&\quad \leq C \lt( \sqrt{\mathscr{H}[f^\e | M_{\rho, 0}](0)} + \sqrt{\mathscr{P}[\rho^\e | \rho](0)}  + \sqrt \e\rt) + C \lt(\mathscr{H}[f^\e | M_{\rho, 0}](0) +  \mathscr{P}[\rho^\e | \rho](0)  +  \e\rt)\cr
&\quad + \int_0^T \int_{\R^2} (\rho^\e - \rho) \nabla^\perp \log\rho \cdot \phi\,dxdt\cr
&\quad\leq C \lt( \sqrt{\mathscr{H}[f^\e | M_{\rho, 0}](0)} + \sqrt{\mathscr{P}[\rho^\e | \rho](0)}  + \sqrt \e\rt) + C \lt(\mathscr{H}[f^\e | M_{\rho, 0}](0) +  \mathscr{P}[\rho^\e | \rho](0)  +  \e\rt).
\end{align*}
Since $\phi$ was arbitrary, this establishes the quantitative convergence in BL-distance
\[
\frac1\e {\rm d}_{\rm BL}^T(\rho^\e u^\e, \rho {\rm u_\e})\leq C \lt( \sqrt{\mathscr{H}[f^\e | M_{\rho, 0}](0)} + \sqrt{\mathscr{P}[\rho^\e | \rho](0)}  + \sqrt \e\rt) + C \lt(\mathscr{H}[f^\e | M_{\rho, 0}](0) +  \mathscr{P}[\rho^\e | \rho](0)  +  \e\rt).
\]

 To derive a quantitative estimate in a negative Sobolev topology, we test the equation for $\rho^\e-\rho$ along the characteristic flow generated by the limiting velocity field. This requires uniform Sobolev control of the associated flow map, together with composition estimates for test functions transported by the flow. We thus record the following auxiliary lemma.

\begin{lemma}\label{lem_flow_comp_sqg}
Let $\gamma\ge4$, and let
\[
b\in L^\infty (0,T;H^\gamma(\R^2;\R^2)\cap W^{2,\infty}(\R^2;\R^2))
\]
be a divergence-free vector field. Let $X(s,t;x)$ be the characteristic flow associated with $b$, namely,
\[
\frac{d}{ds}X(s,t;x)=b(s,X(s,t;x)),\quad X(t,t;x)=x.
\]
Then, for every $0\le s,t\le T$, the map $X(s,t,\cdot)$ is a $C^1$-diffeomorphism of $\R^2$ satisfying
\[
X(s,t,\cdot)-{\rm id}\in H^\gamma(\R^2), \quad \det \nabla_x X(s,t;x)=1 \quad \text{for all }x\in\R^2,
\]
and
\[
\|X(s,t,\cdot)-{\rm id}\|_{H^\gamma} +\|\nabla_x X(s,t,\cdot)\|_{W^{1,\infty}} \le C\lt(\gamma,T,\|b\|_{L^\infty(0,T;H^\gamma\cap W^{2,\infty})}\rt).
\]
Consequently, for every $\varphi\in H^\gamma(\R^2)$,
\[
\|\varphi\circ X(s,t,\cdot)\|_{H^\gamma} \le C\lt(\gamma,T,\|b\|_{L^\infty(0,T;H^\gamma\cap W^{2,\infty})}\rt)\|\varphi\|_{H^\gamma},
\]
and
\[
\|\nabla^2(\varphi\circ X(s,t,\cdot))\|_{L^\infty} \le C\lt(\gamma,T,\|b\|_{L^\infty(0,T;H^\gamma\cap W^{2,\infty})}\rt)\|\varphi\|_{H^\gamma}.
\]
\end{lemma}

\begin{proof}
Since $b\in L^\infty(0,T;W^{1,\infty}(\R^2;\R^2))$, the classical ODE theory yields a unique $C^1$-flow $X(s,t;\cdot)$ on $\R^2$. Moreover, since $\nabla\cdot b=0$, Liouville's formula implies
\[
\det \nabla_x X(s,t;x)=1.
\]

Next, since $b\in L^\infty(0,T;H^\gamma(\R^2;\R^2))$ with $\gamma>2$, the regularity theory for flows of Sobolev vector fields yields
\[
X(s,t,\cdot)-{\rm id}\in H^\gamma(\R^2)
\]
for all $0\le s,t\le T$, together with the estimate
\[
\|X(s,t,\cdot)-{\rm id}\|_{H^\gamma} \le C\lt(\gamma,T,\|b\|_{L^\infty(0,T;H^\gamma)}\rt).
\]

Differentiating the flow equation in $x$, we obtain
\[
\partial_s \nabla_x X(s,t;x)=\nabla b(s,X(s,t;x))\,\nabla_x X(s,t;x),
\]
and hence
\[
\|\nabla_x X(s,t,\cdot)\|_{L^\infty} \le \exp\lt(\int_s^t \|\nabla b(\tau)\|_{L^\infty}\,d\tau\rt) \le C\lt(T,\|b\|_{L^\infty(0,T;W^{1,\infty})}\rt).
\]

Differentiating once more, for each $1\le i,j,k\le2$ we have
\[\begin{aligned}
\partial_s \partial_{ij}X^k(s,t;x) &= \sum_{\ell,m=1}^2 \partial_{\ell m}b^k(s,X(s,t;x)) \,\partial_i X^\ell(s,t;x)\,\partial_j X^m(s,t;x) \\
&\quad +\sum_{\ell=1}^2 \partial_\ell b^k(s,X(s,t;x))\,\partial_{ij}X^\ell(s,t;x).
\end{aligned}\]
Therefore, by Gr\"onwall's lemma,
\[
\|\nabla_x^2 X(s,t,\cdot)\|_{L^\infty} \le C\lt(T,\|b\|_{L^\infty(0,T;W^{2,\infty})}\rt),
\]
and thus
\[
\|\nabla_x X(s,t,\cdot)\|_{W^{1,\infty}} \le C\lt(\gamma,T,\|b\|_{L^\infty(0,T;H^\gamma\cap W^{2,\infty})}\rt).
\]

Now, since
\[
X(s,t,\cdot)-{\rm id}\in H^\gamma(\R^2), \quad \det\nabla_x X(s,t;x)=1,
\]
the composition estimate of \cite[Lemma 2.7]{IKT13} implies
\[
\|\varphi\circ X(s,t,\cdot)\|_{H^\gamma}
\le C\lt(\gamma,T,\|b\|_{L^\infty(0,T;H^\gamma\cap W^{2,\infty})}\rt)\|\varphi\|_{H^\gamma}.
\]

Finally, by the chain rule, for each $1\le i,j\le2$,
\[
\partial_{ij}(\varphi\circ X) = \sum_{k,\ell=1}^2 (\partial_{k\ell}\varphi)(X)\,\partial_i X^k\,\partial_j X^\ell
+\sum_{k=1}^2 (\partial_k\varphi)(X)\,\partial_{ij}X^k,
\]
and hence
\[\begin{aligned}
\|\nabla^2(\varphi\circ X)\|_{L^\infty} &\le C\lt(\|\nabla^2\varphi\|_{L^\infty}\|\nabla X\|_{L^\infty}^2 +\|\nabla\varphi\|_{L^\infty}\|\nabla^2 X\|_{L^\infty}\rt)\\
&\le C\lt(\gamma,T,\|b\|_{L^\infty(0,T;H^\gamma\cap W^{2,\infty})}\rt)\|\varphi\|_{H^\gamma},
\end{aligned}\]
where we used the embedding $H^\gamma(\R^2)\hookrightarrow W^{2,\infty}(\R^2)$.
This completes the proof.
\end{proof}
 
 In the present setting, we apply Lemma \ref{lem_flow_comp_sqg} with
\[
b(t,x):=-\nabla^\perp(-\Delta)^{-\alpha}\rho(t,x).
\]
Assume, in addition, that
\[
\rho\in L^\infty\lt(0,T;L^1(\R^2)\cap H^{\gamma+1}(\R^2)\rt).
\]
We claim that, for every $\alpha\in[\frac12,1)$,
\[
\|b(t)\|_{H^\gamma}+\|b(t)\|_{W^{2,\infty}} \le C_{\alpha,\gamma}\lt(\|\rho(t)\|_{L^1}+\|\rho(t)\|_{H^{\gamma+1}}\rt)
\]
for a.e. $t\in[0,T]$. Indeed, by Plancherel's theorem,
\[\begin{aligned}
\|b(t)\|_{H^\gamma}^2 &= \int_{\R^2} (1+|\eta|^2)^\gamma |\eta|^{2-4\alpha} |\hat\rho(t,\eta)|^2\,d\eta \\
&= \int_{|\eta|\le1} (1+|\eta|^2)^\gamma |\eta|^{2-4\alpha} |\hat\rho(t,\eta)|^2\,d\eta
 + \int_{|\eta|>1} (1+|\eta|^2)^\gamma |\eta|^{2-4\alpha} |\hat\rho(t,\eta)|^2\,d\eta.
\end{aligned}\]
For the low-frequency part, using $\|\hat\rho(t)\|_{L^\infty}\le \|\rho(t)\|_{L^1}$, we get
\[\begin{aligned}
\int_{|\eta|\le1} (1+|\eta|^2)^\gamma |\eta|^{2-4\alpha} |\hat\rho(t,\eta)|^2\,d\eta
&\le C \|\rho(t)\|_{L^1}^2 \int_{|\eta|\le1} |\eta|^{2-4\alpha}\,d\eta  \le C_\alpha \|\rho(t)\|_{L^1}^2,
\end{aligned}\]
where the last integral is finite precisely since $\alpha<1$ in two dimensions.

For the high-frequency part, since $\alpha\ge\frac12$, we get $2-4\alpha\le0$, and thus $|\eta|^{2-4\alpha}\le1$ for $|\eta|>1$. Hence, we have
\[\begin{aligned}
\int_{|\eta|>1} (1+|\eta|^2)^\gamma |\eta|^{2-4\alpha} |\hat\rho(t,\eta)|^2\,d\eta
&\le \int_{|\eta|>1} (1+|\eta|^2)^\gamma |\hat\rho(t,\eta)|^2\,d\eta  \le \|\rho(t)\|_{H^\gamma}^2
\le \|\rho(t)\|_{H^{\gamma+1}}^2.
\end{aligned}\]
Combining the above bounds, we infer that
\[
\|b(t)\|_{H^\gamma}
\le C_{\alpha,\gamma}\lt(\|\rho(t)\|_{L^1}+\|\rho(t)\|_{H^{\gamma+1}}\rt).
\]

Since $\gamma\ge4$, the Sobolev embedding $H^\gamma(\R^2)\hookrightarrow W^{2,\infty}(\R^2)$ yields
\[
\|b(t)\|_{W^{2,\infty}} \le C_\gamma \|b(t)\|_{H^\gamma} \le C_{\alpha,\gamma}\lt(\|\rho(t)\|_{L^1}+\|\rho(t)\|_{H^{\gamma+1}}\rt).
\]
Hence
\[
b\in L^\infty (0,T;H^\gamma(\R^2;\R^2)\cap W^{2,\infty}(\R^2;\R^2)),
\]
and Lemma \ref{lem_flow_comp_sqg} applies.

We now derive a quantitative estimate for $\rho^\e-\rho$ in a negative Sobolev topology. 
To this end, we assume that $\alpha\in[\frac12,1)$ and fix an integer $\gamma\ge4$.
Since $L^1(\R^2)\subset H^{-s}(\R^2)$ for every $s>1$, we have
\[
\rho^\e(t),\,\rho(t)\in L^1(\R^2)\subset H^{-\gamma}(\R^2),
\quad t\in[0,T].
\]
Assume moreover that
\[
\rho\in L^\infty(0,T;H^{\gamma+1}(\R^2)).
\]
Let  $b(t,x):=-\nabla^\perp(-\Delta)^{-\alpha}\rho(t,x)$, and let $X(s,t;x)$ be the characteristic flow associated with $b$, namely,
\[
\frac{d}{ds}X(s,t;x)=b(s,X(s,t;x)),\quad X(t,t;x)=x.
\]
Since $\nabla\cdot b=0$, the flow is measure-preserving:
\[
\det \nabla_x X(s,t;x)=1.
\]
Moreover, by Lemma \ref{lem_flow_comp_sqg},
\[
\|\varphi(X(s,t;\cdot))\|_{H^\gamma}
+\|\nabla^2(\varphi(X(s,t;\cdot)))\|_{L^\infty}
\le C\|\varphi\|_{H^\gamma}
\]
for all $\varphi\in H^\gamma(\R^2)$, where $C=C(\gamma,T,\|\rho\|_{L^\infty(0,T;H^{\gamma+1})})$.

On the other hand, arguing as in the diffusive case, one checks that $\rho^\e$ satisfies
\[
\partial_t \rho^\e - \nabla\cdot\lt(\rho^\e \nabla^\perp(-\Delta)^{-\alpha}\rho^\e\rt) = -\nabla\cdot F^\e
\]
in the sense of distributions, where
\[
F^\e:=\e\partial_t (m^\e)^\perp -\nabla^\perp\!\cdot\!\int_{\R^2}(\xi\otimes\xi-\mathbb I_2)f^\e\,d\xi -(m^\e)^\perp.
\]
Here, for a matrix field $A=(A_{ij})_{1\le i,j\le2}$, we define
\[
(\nabla^\perp \cdot A)_j := -\pa_2 A_{2j} + \pa_1 A_{1j}, \quad j=1,2.
\]
Testing the equation for $\rho^\e-\rho$ against $\varphi(X(s,t;\cdot))$ and integrating along the flow yields
\begin{align*}
\int_{\R^2}(\rho^\e-\rho)(t,x)\varphi(x)\,dx &= \int_{\R^2}(\rho_0^\e-\rho_0)(x)\varphi(X(0,t;x))\,dx \\
&\quad -\int_0^t\int_{\R^2}\rho^\e \nabla^\perp(-\Delta)^{-\alpha}(\rho^\e-\rho)\cdot \nabla[\varphi(X(s,t;x))]\,dxds \\
&\quad -\int_0^t\int_{\R^2}F^\e(s,x)\cdot \nabla[\varphi(X(s,t;x))]\,dxds \\
&=: J_1 + J_2 + J_3.
\end{align*}

For the initial term, Lemma \ref{lem_flow_comp_sqg} gives
\[
|J_1| \le \|\rho_0^\e-\rho_0\|_{H^{-\gamma}} \|\varphi(X(0,t;\cdot))\|_{H^\gamma} \le C\|\rho_0^\e-\rho_0\|_{H^{-\gamma}}\|\varphi\|_{H^\gamma}.
\]

For $J_2$, we decompose
\begin{align*}
J_2 &= -\int_0^t\int_{\R^2}(\rho^\e-\rho) \nabla^\perp(-\Delta)^{-\alpha}(\rho^\e-\rho)\cdot \nabla[\varphi(X)]\,dxds \\
&\quad -\int_0^t\int_{\R^2}\rho\, \nabla^\perp(-\Delta)^{-\alpha}(\rho^\e-\rho)\cdot \nabla[\varphi(X)]\,dxds \\
&=: J_{21}+J_{22}.
\end{align*}
Using that $\nabla^\perp(-\Delta)^{-\alpha}(\rho^\e-\rho)$ is divergence-free, integrating by parts once, and using
\[
\|\nabla^2(\varphi(X))\|_{L^\infty}\le C\|\varphi\|_{H^\gamma},
\]
by Lemma \ref{lem_flow_comp_sqg}, we obtain
\[
|J_{21}| \le C\|\varphi\|_{H^\gamma} \int_0^t \|\rho^\e-\rho\|_{\dot H^{-\alpha}}^2\,ds.
\]
For $J_{22}$, since $\alpha\in[\frac12,1)$, by using the same argument as in the proof of Lemma \ref{lem_flow_comp_sqg}, we find
\begin{align*}
\|(-\Delta)^{-\alpha}\nabla^\perp \cdot G\|_{H^\gamma}^2&\le \int_{\R^2} (1+|\eta|^2)^\gamma |\eta|^{2-4\alpha} |\hat G (\eta)|^2\,d\eta \\
&\le C\|G\|_{L^1}^2\int_{|\eta|\le 1}|\eta|^{2-4\alpha}\,d\eta +C\int_{|\eta|>1} (1+|\eta|^2)^\gamma|\hat G(\eta)|^2\,d\eta\\
&\le C(\|G\|_{L^1}^2 + \|G\|_{H^\gamma}^2).
\end{align*}
Moreover, since $\gamma>1$ in dimension two, $H^\gamma(\R^2)$ is an algebra, and thus
\[
\|\nabla\rho\,\varphi(X)\|_{L^1} +\|\nabla\rho\,\varphi(X)\|_{H^\gamma} \le C\|\rho\|_{H^{\gamma+1}}\|\varphi(X)\|_{H^\gamma}.
\]
Hence, we have
\begin{align*}
|J_{22}| &= \lt| \int_0^t\int_{\R^2}
(\rho^\e-\rho)
(-\Delta)^{-\alpha} \nabla^\perp \cdot [\nabla\rho \,\varphi(X)]\,dxds \rt|\cr
&\le
C\|\rho\|_{L^\infty(0,T;H^{\gamma+1})}
\|\varphi(X)\|_{L^\infty(0,T;H^\gamma)}
\int_0^t \|\rho^\e-\rho\|_{H^{-\gamma}}\,ds \cr
&\le
C\|\varphi\|_{H^\gamma}
\int_0^t \|\rho^\e-\rho\|_{H^{-\gamma}}\,ds.
\end{align*}

For $J_3$, we decompose according to the three components of $F^\e$:
\begin{align*}
J_3 &= \e \int_0^t \int_{\R^2} \pa_s m^\e \cdot \nabla^\perp [\varphi(X(s,t;x))]\,dxds\\
&\quad -\int_0^t \int_{\R^2} \lt( \int_{\R^2} (\xi\otimes\xi-\mathbb{I}_2)f^\e \,d\xi\rt): \nabla^\perp\otimes \nabla [\varphi(X(s,t;x))]\,dxds\\
&\quad -\int_0^t \int_{\R^2} m^\e \cdot \nabla^\perp[\varphi(X(s,t;x))]\,dxds\\
&=: J_{31} + J_{32} + J_{33}.
\end{align*}
Integrating by parts in time, we obtain
\begin{align*}
J_{31} &= \e \int_{\R^2} m^\e \cdot \nabla^\perp\varphi\,dx - \e \int_{\R^2} m_0^\e \cdot\nabla^\perp[\varphi(X(0,t;x))]\,dx - \e \int_0^t \int_{\R^2} m^\e \pa_s \nabla^\perp[\varphi(X(s,t;x))]\,dxds\\
&\le C\e\|\varphi\|_{H^\gamma}\sup_{0\le t \le T}\int_{\R^2} \lt(\rho^\e |u^\e -{\rm u}_\e|+ \rho^\e |{\rm u}_\e|\rt)\,dx  + C\e\|\varphi\|_{H^\gamma}\int_{\R^2} \rho_0^\e |u_0^\e -{\rm u_0}_\e|\,dx + C\e \int_{\R^2} \rho_0^\e |{\rm u_0}_\e|\,dx\\
&\le C\e \lt( \mathscr{H}[f^\e | M_{\rho, 0}](0) + \mathscr{P}[\rho^\e | \rho](0) +\e\rt)^{\frac12}\|\varphi\|_{H^\gamma}.
\end{align*}
Here we used
\[\begin{aligned}
|\pa_s \nabla^\perp [\varphi(X(s,t;x))]| &=\lt|\pa_s \lt( \nabla X(s,t;x) \cdot (\nabla^\perp \varphi)(X(s,t;x))\rt)\rt|\\
&\le C\|\nabla \nabla^\perp(-\Delta)^{-\alpha}\rho\|_{L^\infty} \|\nabla\varphi\|_{L^\infty} + \|\nabla X(s,t;\cdot)\|_{L^\infty}\|\nabla^2 \varphi\|_{L^\infty}\|\nabla^\perp(-\Delta)^{-\alpha}\rho\|_{L^\infty}\\
&\le C\|\varphi\|_{H^\gamma}.
\end{aligned}\]

For $J_{32}$, rewriting it by the integration by parts in $\xi$, we estimate
\begin{align*}
J_{32} &= \int_0^t \int_{\R^2} \lt(\int_{\R^2} (\xi f^\e +\nabla_\xi f^\e)\otimes \xi\,d\xi \rt):\nabla^\perp \otimes \nabla [\varphi(X(s,t;x))]\,dxds\\
&\le \|\varphi\|_{H^\gamma}\lt(\int_0^t\iint_{\R^2\times\R^2} \frac{1}{f^\e} |\nabla_\xi f^\e + \xi f^\e|^2\,dxd\xi ds\rt)^{\frac12} \lt( \int_0^t \int_{\R^2\times\R^2} |\xi|^2 f^\e\,dxd\xi ds\rt)^{\frac12}\\
&\le C\e^{\frac12} \lt(\mathscr{H}[f^\e | M_{\rho, 0}](0) + \mathscr{P}[\rho^\e | \rho](0)+ \e\rt)^{\frac12}\|\varphi\|_{H^\gamma},
\end{align*}
where we used
\begin{align*}
&\int_0^t\iint_{\R^2 \times \R^2} \frac{1}{f^\e}|\nabla_\xi f^\e +\xi f^\e|^2\,dxd\xi ds \cr
&\quad = \int_0^t\iint_{\R^2 \times \R^2} \frac{1}{f^\e}|\nabla_\xi f^\e +(\xi - u^\e) f^\e|^2\,dxd\xi ds + \int_0^t \int_{\R^2} \rho^\e |u^\e|^2\,dxds\cr
&\quad \leq \int_0^t\iint_{\R^2 \times \R^2} \frac{1}{f^\e}|\nabla_\xi f^\e +(\xi - u^\e) f^\e|^2\,dxd\xi ds + 2 \int_0^t \int_{\R^2} \rho^\e |u^\e - {\rm u_\e}|^2\,dxds +  2\int_0^t \int_{\R^2} \rho^\e|{\rm u_\e}|^2\,dxds\cr
&\quad \leq C\e\lt( \mathscr{H}[f^\e | M_{\rho, 0}](0) + \mathscr{P}[\rho^\e | \rho](0)\rt) + C\e^2,
\end{align*}
due to \eqref{con_sqg}. 

Finally, using \eqref{con_sqg} again, we deduce
\begin{align*}
J_{33}&\le C\|\varphi\|_{H^\gamma} \int_0^t \int_{\R^2} \lt(\rho^\e |u^\e -{\rm u}_\e| + \rho^\e |{\rm u}_\e|\rt)\,dxds\\
&\le C\|\varphi\|_{H^\gamma} \lt(\int_0^t \int_{\R^2} \rho^\e |u^\e -{\rm u}_\e|^2\,dxds\rt)^{\frac12} + C\e \|\varphi\|_{H^\gamma}\\
&\le C\e^{\frac12} \lt(\mathscr{H}[f^\e | M_{\rho, 0}](0) + \mathscr{P}[\rho^\e | \rho](0)+ \e\rt)^{\frac12}\|\varphi\|_{H^\gamma}.
\end{align*}
Thus, we collect all the estimates for $J_i$'s to have
\[
\|(\rho^\e -\rho)(t)\|_{H^{-\gamma}}  \le \|\rho_0^\e - \rho_0\|_{H^{-\gamma}}  + C\lt(\mathscr{H}[f^\e | M_{\rho, 0}](0) + \mathscr{P}[\rho^\e | \rho](0)+ \e\rt) + C\int_0^t \|(\rho^\e -\rho)(s)\|_{H^{-\gamma}}\,ds.
\]
An application of Gr\"onwall's lemma therefore yields
\[
\|(\rho^\e -\rho)(t)\|_{H^{-\gamma}} \le C\lt(\|\rho_0^\e - \rho_0\|_{H^{-\gamma}} + \mathscr{H}[f^\e | M_{\rho, 0}](0) + \mathscr{P}[\rho^\e | \rho](0)+ \e\rt),
\]
which gives the desired quantitative estimate for $\rho^\e$.  This completes the proof.
   
\begin{remark}\label{rmk_opt_gsqg} Recall ${\rm u}_\e:= - \e\lt(\nabla^\perp\log\rho + \nabla^\perp(-\Delta)^{-\alpha}\rho\rt)$. Then Corollary \ref{cor_gd2}, together with \eqref{con_sqg} and the above $H^{-\gamma}$ estimate for $\rho^\e-\rho$, yields
\begin{align*}
\int_0^T \|f-M_{\rho, {\rm u}_\e}\|_{H_{x,\xi}^{-\sigma}}^2\,dt &\le C\int_0^T\iint_{\R^d \times \R^d} \frac1{f^\e}|\nabla_\xi f^\e + (\xi-u^\e) f^\e|^2\,dxd\xi dt +C\int_0^T \intr \rho^\e |u^\e - {\rm u}_\e|^2\,dxdt  \cr
&\quad + C\|{\rm u}_\e\|_{L^\infty((0,T)\times\R^d)}^2 \int_0^T \|\rho^\e - \rho\|_{L^1}^2\,dt  +C\int_0^T \|\rho^\e - \rho\|_{H^{-\sigma}}^2\,dt\\
&\le C\lt( \|\rho_0^\e -\rho_0\|_{H^{-\gamma}} + \mathscr{H}[f^\e | M_{\rho, 0}](0)  + \mathscr{P}[\rho^\e | \rho](0) + \e\rt)^2
\end{align*}
for any $\sigma \ge \gamma$. In particular, the shifted Maxwellian $M_{\rho,{\rm u}_\e}$ provides a rigorously justified refined approximation of the kinetic distribution in a weak topology.
\end{remark}

%
%
%
%
%
%

\section*{Acknowledgments}
The authors would like to thank Jos\'e Carrillo for his valuable comments on this work. The work of Y.-P. Choi was supported by NRF grant no. 2022R1A2C1002820 and RS-2024-00406821. The work of J. Jung was supported by NRF grant no. 2022R1A2C1002820.

%
%
%
%
%
%


%
%
%
%
%
%
\appendix

\section{Formal derivation of the gSQG limit}\label{app:gsqg}

We derive the gSQG equation \eqref{eq_sqg} from the kinetic model \eqref{kin_sqg_sca} in two dimensions, emphasizing the role of fast perpendicular rotations in velocity space. A direct closure of the moment equations does not capture the correct structure under this scaling, because velocity transport and forcing terms appear at the same order as the linear Fokker--Planck operator. The appropriate viewpoint is to separate the fast dynamics generated by the transverse rotation from the slower contributions and to average along the fast orbits. Writing \eqref{kin_sqg_sca} as
\[
\e  \pa_t f^\e + \xi \cdot \nabla f^\e = \nabla_\xi \cdot (\nabla_\xi f^\e + \xi f^\e) + \nabla(-\Delta)^{-\alpha}\rho^\e \cdot \nabla_\xi f^\e - \frac{1}{\e}\,\xi^\perp \cdot \nabla_\xi f^\e,
\]
we single out the fast rotation operator
\[
L_{-1} f := \xi^\perp \cdot \nabla_\xi f,
\]
which generates rotations in the velocity variable $\xi\in\R^2$. In polar coordinates $\xi=(r\cos\theta,r\sin\theta)$, we get $L_{-1}= \pa_\theta$, thus
\[
L_{-1} f=0 \quad \Longleftrightarrow \quad f(\xi)=\varphi(|\xi|) \quad\text{with a radial profile }\varphi.
\]

Equivalently, if $\Pi$ denotes the angular averaging operator
\[
(\Pi g)(x,\xi) := \frac{1}{2\pi}\int_0^{2\pi} g(x,R_\theta \xi)\,d\theta,
\]
where $R_\theta$ denotes the counterclockwise rotation in $\R^2$ by angle $\theta$, then $\Pi$ is the orthogonal projection onto $\ker L_{-1}$. In particular, $\Pi^2=\Pi$, we have $\Pi L_{-1}=0$, and $L_{-1}$ is invertible on the orthogonal complement $(I-\Pi)L^2_\xi$ of mean-zero angular functions.

We consider a Hilbert expansion $f^\e=f^0+\e f^1+O(\e^2)$. The leading constraint $L_{-1}f^0=0$ enforces radial symmetry in $\xi$. Accordingly, we set an ansatz for $f_0$ as 
\[
f^0(t,x,\xi)= \rho^0 M_{1,0}(\xi), \quad \rho^0 := \int_{\R^2} f^0\,d\xi,
\]
which is consistent with mass conservation and serves as the natural local equilibrium. Then we observe $\nabla_\xi f^0=-\xi f^0$ and $\nabla_\xi\cdot(\nabla_\xi f^0+\xi f^0)=0$. At order $O(1)$, we obtain
\[
- L_{-1} f^1 + \xi\cdot\nabla f^0 - \nabla(-\Delta)^{-\alpha}\rho^0 \cdot \nabla_\xi f^0 - \nabla_\xi \cdot (\nabla_\xi f^0 + \xi f^0)=0, 
\]
hence, using the identities above,
\[
L_{-1} f^1 =   (\xi\cdot\nabla \rho^0)M_{1,0} + (\rho^0 \nabla(-\Delta)^{-\alpha}\rho^0\cdot\xi)M_{1,0}.
\]
Equivalently,
\[
L_{-1} f^1 =  \bigl(\xi\cdot(\nabla \rho^0 + \rho^0\nabla(-\Delta)^{-\alpha}\rho^0)\bigr)M_{1,0}.
\]

Since $L_{-1}$ is not invertible on its whole domain, a solvability condition is required: the right-hand side must be orthogonal to $\ker L_{-1}$, which consists of radial functions. This is precisely enforced by applying the angular averaging operator $\Pi$, leading to the macroscopic condition
\[
\Pi\bigl(\xi\cdot\nabla f^0 - \nabla(-\Delta)^{-\alpha}\rho^0\cdot\nabla_\xi f^0\bigr)=0,
\]
which actually holds for $f_0$ we set above.  On the complementary subspace $(I-\Pi)L^2_\xi$, $L_{-1}$ is invertible, so that the non-radial part of the right-hand side uniquely determines $f^1$ up to a radial component. Using
\[
L_{-1}\big((\xi\cdot a)M_{1,0}\big)= (\xi^\perp\cdot a)M_{1,0}
\]
for any constant vector $a$, we may choose the corrector
\[
f^1 = - \bigl((\nabla \rho^0 + \rho^0 \nabla (-\Delta)^{-\alpha}\rho^0)^\perp \cdot \xi\bigr)\,M_{1,0},
\]
which is uniquely defined up to an arbitrary radial function. The macroscopic flux is
\[
m^1(t,x):=\int_{\R^2}\xi f^1\,d\xi = - (\nabla \rho^0 + \rho^0 \nabla (-\Delta)^{-\alpha}\rho^0)^\perp \int_{\R^2}\xi\otimes\xi\,M_{1,0}\,d\xi
=
- \nabla^\perp \rho^0 - \rho^0 \nabla^\perp (-\Delta)^{-\alpha}\rho^0.
\]
Inserting this into the continuity equation
\[
\pa_t\rho^0+\nabla\cdot m^1=0
\]
yields
\[
\pa_t \rho^0 - \nabla\cdot(\nabla^\perp \rho^0) - \nabla \cdot  (\rho^0 \nabla^\perp (-\Delta)^{-\alpha}\rho^0 )=0.
\]
Since $\nabla\cdot\nabla^\perp \equiv 0$ in two dimensions, the intermediate term involving $\nabla^\perp \rho^0$ vanishes, and therefore
\[
\pa_t \rho^0 - \nabla \cdot  (\rho^0 \nabla^\perp (-\Delta)^{-\alpha}\rho^0 )=0,
\]
which coincides with \eqref{eq_sqg}.

We stress that a term involving $\nabla^\perp \rho^0$ genuinely appears in the kinetic corrector and in the intermediate flux. It disappears only after applying the divergence in the continuity equation, thanks to the identity $\nabla\cdot\nabla^\perp=0$ in two dimensions. This cancellation explains why moment equations alone are not sufficient to capture the correct structure, and why the fast rotation averaging coupled with the projection $\Pi$ provides the natural framework for the gSQG limit.

%
%
%
%
%
%

\section{Formal optimal convergence rates}\label{app_optimal}

In this appendix, we discuss the formal convergence rates suggested by the asymptotic expansions in the three singular regimes considered in the paper.

%
%
%
%
%
%
\subsection{Diffusive limit}\label{app_optimal_diff}

We first consider the diffusive scaling
\[
\e \pa_t f^\e + \xi \cdot \nabla f^\e - \nabla (-\Delta)^{-\alpha}\rho^\e \cdot \nabla_\xi f^\e = \frac1\e \nabla_\xi \cdot (\nabla_\xi f^\e + \xi f^\e),
\]
where $\rho^\e=\intr f^\e\,d\xi$. We seek a formal expansion of the form
\[
f^\e = f_0 + \e f_1 + \e^2 f_2 + \cdots.
\]
Substituting this ansatz into the equation and comparing powers of $\e$, the leading-order term gives
\[
\nabla_\xi \cdot (\nabla_\xi f_0 + \xi f_0)=0.
\]
Hence $f_0$ lies in the kernel of the Fokker--Planck operator, and thus must be of the form
\[
f_0=M_{\rho,0}=\frac{\rho(t,x)}{(2\pi)^{\frac d2}}e^{-\frac{|\xi|^2}{2}}.
\]

To determine the first-order correction, we collect the terms of order $O(1)$:
\[
\xi \cdot \nabla f_0 - \nabla (-\Delta)^{-\alpha}\rho \cdot \nabla_\xi f_0 = \nabla_\xi \cdot (\nabla_\xi f_1 + \xi f_1).
\]
Using
\[
\nabla f_0 = M_{1,0}\nabla \rho, \quad \nabla_\xi f_0 = -\xi M_{\rho,0},
\]
we obtain
\[
\xi \cdot \nabla f_0 - \nabla (-\Delta)^{-\alpha}\rho \cdot \nabla_\xi f_0 = \xi \cdot \nabla \rho\, M_{1,0} + \rho \, \xi \cdot \nabla (-\Delta)^{-\alpha}\rho \, M_{1,0}.
\]
Equivalently,
\[
\xi \cdot \nabla f_0 - \nabla (-\Delta)^{-\alpha}\rho \cdot \nabla_\xi f_0 = \xi \cdot \lt(\nabla \rho + \rho \nabla (-\Delta)^{-\alpha}\rho\rt) M_{1,0}.
\]

Since
\[
\nabla_\xi \cdot \lt(\nabla_\xi (\xi_j M_{1,0}) + \xi (\xi_j M_{1,0})\rt) = -\xi_j M_{1,0},
\]
it follows formally that
\[
f_1=-\xi \cdot \lt(\nabla \rho + \rho \nabla (-\Delta)^{-\alpha}\rho\rt) M_{1,0}.
\]
Recalling that
\[
\nabla \rho + \rho \nabla (-\Delta)^{-\alpha}\rho = \rho \lt(\nabla \log \rho + \nabla (-\Delta)^{-\alpha}\rho\rt),
\]
we may also write
\[
f_1=-\rho\,\xi \cdot \lt(\nabla \log \rho + \nabla (-\Delta)^{-\alpha}\rho\rt) M_{1,0}.
\]

A crucial observation is that
\[
\intr f_1\,d\xi =0,
\]
since $f_1$ is odd in $\xi$. Therefore, the first-order correction does not contribute to the density variable. As a consequence, the expansion suggests the improved density approximation
\[
\rho^\e=\rho+O(\e^2).
\]
On the other hand, the momentum satisfies
\[
m^\e:=\intr\xi f^\e\,d\xi = \e \intr\xi f_1\,d\xi + O(\e^2),
\]
and hence
\[
\frac1\e m^\e = -\rho \lt(\nabla \log \rho + \nabla (-\Delta)^{-\alpha}\rho\rt)+O(\e).
\]

We are thus led to the formal convergence rates
\[
f^\e-M_{\rho,0}=O(\e), \quad
\rho^\e-\rho=O(\e^2),
\]
and
\[
\frac1\e m^\e
+\rho \Bigl(\nabla (-\Delta)^{-\alpha}\rho + \nabla \log \rho\Bigr)
=O(\e).
\]
In particular, the density variable gains one order compared to the kinetic distribution itself. This additional order is due to the oddness of the first-order corrector in the velocity variable and is consistent with the parabolic structure of the diffusive limit. The estimates obtained in Theorem \ref{thm_kin1}, especially the quantitative control in a negative Sobolev topology, are therefore compatible with the formally optimal diffusive scaling.

Concerning the optimal rate toward the local Maxwellian state $M_{\rho,{\rm u}_\e}$, recall that
\[
{\rm u}_\e=-\e \lt(\nabla (-\Delta)^{-\alpha}\rho+\nabla \log \rho\rt).
\]
By the expression of $f_1$, we have
\[
\e f_1=\xi\cdot {\rm u}_\e\, M_{\rho,0}.
\]
Hence,
\[
f^\e=M_{\rho,0}+\xi\cdot {\rm u}_\e\, M_{\rho,0}+O(\e^2).
\]

On the other hand, since
\[
M_{\rho,{\rm u}}(\xi)=\rho M_{1,{\rm u}}(\xi),
\]
we apply the Taylor expansion with integral remainder with respect to ${\rm u}$ around ${\rm u}=0$. We first write
\[
M_{\rho,{\rm u}_\e}-M_{\rho,0} = \int_0^1 \frac{d}{d\tau} M_{\rho,\tau {\rm u}_\e}\,d\tau = \rho\int_0^1 M_{1,\tau{\rm u}_\e}(\xi-\tau{\rm u}_\e)\cdot {\rm u}_\e\,d\tau.
\]
Subtracting the first-order term $\xi\cdot {\rm u}_\e\,M_{\rho,0}$, we obtain
\[
 M_{\rho,{\rm u}_\e}-M_{\rho,0}-\xi\cdot {\rm u}_\e\,M_{\rho,0}  = \rho(\xi\cdot {\rm u}_\e)\int_0^1 \bigl(M_{1,\tau{\rm u}_\e}-M_{1,0}\bigr)\,d\tau -\rho |{\rm u}_\e|^2 \int_0^1 \tau M_{1,\tau{\rm u}_\e}\,d\tau.
\]
Moreover, applying the fundamental theorem of calculus to the map
$\nu \mapsto M_{1,\nu\tau{\rm u}_\e}$, we get
\[
M_{1,\tau{\rm u}_\e}-M_{1,0} = \int_0^1 M_{1,\nu\tau{\rm u}_\e}(\xi-\nu\tau{\rm u}_\e)\cdot \tau{\rm u}_\e\,d\nu.
\]
This gives
\[\begin{aligned}
|M_{\rho,{\rm u}_\e}-M_{\rho,0}-\xi\cdot {\rm u}_\e\,M_{\rho,0}| &\le C\rho |\xi|\,|{\rm u}_\e| \int_0^1\int_0^1 M_{1,\nu\tau{\rm u}_\e}\bigl(|\xi|+\nu\tau|{\rm u}_\e|\bigr)\tau|{\rm u}_\e|\,d\nu d\tau   + C\rho |{\rm u}_\e|^2 \int_0^1 M_{1,\tau{\rm u}_\e}\,d\tau \\
&\le C |{\rm u}_\e|^2 (1+|\xi|^2)M_{\rho,0},
\end{aligned}\]
at least formally for $|{\rm u}_\e|$ small, where we used that the shifted Maxwellians
$M_{1,\nu\tau{\rm u}_\e}$ remain uniformly comparable to $M_{1,0}$.  Hence,
\[
M_{\rho,{\rm u}_\e} = M_{\rho,0} +\xi\cdot {\rm u}_\e\, M_{\rho,0} +O\bigl(|{\rm u}_\e|^2(1+|\xi|^2)M_{\rho,0}\bigr).
\]
Since ${\rm u}_\e=O(\e)$, it follows that
\[
M_{\rho,{\rm u}_\e} = M_{\rho,0} +\xi\cdot {\rm u}_\e\, M_{\rho,0} +O\bigl(\e^2(1+|\xi|^2)M_{\rho,0}\bigr),
\]
and subsequently,
\[
f^\e-M_{\rho,{\rm u}_\e} = O\bigl(\e^2(1+|\xi|^2)M_{\rho,0}\bigr).
\]

%
%
%
%
%
%
\subsection{High-field limit}\label{app_optimal_high}

We next consider the high-field scaling
\[
\e \pa_t f^\e + \e \xi \cdot \nabla f^\e - \nabla (-\Delta)^{-\alpha}\rho^\e \cdot \nabla_\xi f^\e = \nabla_\xi \cdot (\nabla_\xi f^\e + \xi f^\e).
\]
We again assume an expansion
\[
f^\e=f_0+\e f_1+\e^2 f_2+\cdots,
\quad
\rho^\e=\rho+\e \rho_1+\e^2 \rho_2+\cdots.
\]
At leading order, the equation becomes
\[
-\nabla (-\Delta)^{-\alpha}\rho \cdot \nabla_\xi f_0 = \nabla_\xi \cdot (\nabla_\xi f_0+\xi f_0).
\]
This relation characterizes the local equilibrium associated with the drift velocity
\[
u=-\nabla (-\Delta)^{-\alpha}\rho,
\]
namely
\[
f_0=M_{\rho,u}=M_{\rho,-\nabla (-\Delta)^{-\alpha}\rho}.
\]

Indeed, one may rewrite the leading-order equation as
\[
\nabla_\xi \cdot \lt(\nabla_\xi f_0 + (\xi-u)f_0\rt)=0,
\]
whose equilibrium is exactly the Maxwellian centered at $u$. Thus, the dominant balance is no longer around the zero-velocity Maxwellian, but around the shifted local Maxwellian determined by the interaction field.

At the next order, we obtain
\[
\pa_t f_0 + \xi \cdot \nabla f_0 -\nabla (-\Delta)^{-\alpha}\rho \cdot \nabla_\xi f_1 -\nabla (-\Delta)^{-\alpha}\rho_1 \cdot \nabla_\xi f_0
=
\nabla_\xi \cdot (\nabla_\xi f_1+\xi f_1).
\]
Equivalently, introducing the linearized Fokker--Planck operator around the shifted equilibrium
\[
\mathcal{L}_u g:=\nabla_\xi \cdot \lt(\nabla_\xi g+(\xi-u)g\rt),
\]
the first-order corrector $f_1$ is formally determined by
\[
\mathcal{L}_u f_1
=
\pa_t f_0 + \xi \cdot \nabla f_0 -\nabla (-\Delta)^{-\alpha}\rho_1 \cdot \nabla_\xi f_0.
\]
Hence, unlike in the diffusive regime, the first-order correction can still be identified at the formal level, but its structure is more involved because the reference equilibrium already carries the nontrivial drift $u=-\nabla (-\Delta)^{-\alpha}\rho$.

In particular, there is no reason to expect that
\[
\intr f_1\,d\xi=\rho_1
\]
vanishes in general. Therefore, unlike in the diffusive regime, the density variable does not benefit from an additional cancellation at first order. For this reason, the formal expansion suggests only
\[
f^\e=M_{\rho,-\nabla (-\Delta)^{-\alpha}\rho}+O(\e),
\]
and, at the macroscopic level,
\[
\rho^\e-\rho=O(\e),
\quad
m^\e + \rho \nabla (-\Delta)^{-\alpha}\rho = O(\e).
\]
Accordingly, one expects first-order convergence for both the kinetic distribution and its macroscopic moments. This is consistent with the quantitative estimates established in Theorem \ref{thm_kin2}.

%
%
%
%
%
%
\subsection{gSQG limit}

We finally consider the strong magnetic field regime leading to the gSQG equation. Since the formal derivation of the limiting equation has already been presented in Appendix \ref{app:gsqg}, we only recall here the main consequence relevant to the present appendix, namely the expected convergence rate suggested by the corresponding Hilbert expansion.

More precisely, as shown in Appendix \ref{app:gsqg}, the kinetic equation
\[
\e \pa_t f^\e + \xi \cdot \nabla f^\e - \nabla (-\Delta)^{-\alpha}\rho^\e \cdot \nabla_\xi f^\e + \frac1\e \xi^\perp \cdot \nabla_\xi f^\e = \nabla_\xi \cdot (\nabla_\xi f^\e+\xi f^\e)
\]
in two dimensions admits a formal expansion of the form
\[
f^\e=f_0+\e f_1+O(\e^2),
\]
where the leading-order term is given by
\[
f_0=M_{\rho,0},
\]
and the first-order corrector $f_1$  is given by
\[
f_1=-\lt((\nabla \rho + \rho \nabla (-\Delta)^{-\alpha}\rho)^\perp \cdot \xi\rt)M_{1,0},
\]
which  encodes the effect of transport and nonlocal interaction after averaging along the fast rotational flow in velocity space.

At the macroscopic level, this first-order correction generates the effective  flux
\[
m^1=- \nabla^\perp \rho-\rho \nabla^\perp (-\Delta)^{-\alpha}\rho,
\]

and the corresponding limit equation is
\[
\partial_t \rho - \nabla \cdot \bigl(\rho \nabla^\perp (-\Delta)^{-\alpha}\rho \bigr)=0.
\]
Equivalently, using $\nabla \cdot \nabla^\perp \phi =0$, one may write
\[
\partial_t \rho + \nabla^\perp (-\Delta)^{-\alpha}\rho \cdot \nabla \rho =0,
\]
which is the generalized surface quasi-geostrophic equation in active scalar form.

Therefore, at the level of the formal expansion, one expects
\[
f^\e-M_{\rho,0}=O(\e).
\]
 Moreover, since the first-order corrector $f_1$ is odd in $\xi$, we have
\[
\int_{\R^2} f_1\,d\xi=0.
\]
Hence the first-order correction does not contribute to the density variable, and formally
\[
\rho^\e-\rho=O(\e^2).
\]
On the other hand, the momentum satisfies
\[
m^\e:=\int_{\R^2} \xi f^\e\,d\xi = \e \int_{\R^2}\xi f_1\,d\xi + O(\e^2),
\]
so that
\[
\frac1\e m^\e
=
-\nabla^\perp \rho-\rho \nabla^\perp (-\Delta)^{-\alpha}\rho + O(\e).
\]
Thus, as in the diffusive regime, the density variable gains one additional order compared to the kinetic distribution itself. This improvement is again due to the oddness of the first-order corrector in the velocity variable. We emphasize, however, that this is only a formal prediction of the asymptotic expansion; our rigorous estimates do not address this sharper $O(\e^2)$ density rate.

Nevertheless, the first-order corrector can still be absorbed into a suitably shifted Maxwellian, which yields a more accurate kinetic approximation. Indeed, since
\[
f_1= -\lt((\nabla \rho+\rho \nabla (-\Delta)^{-\alpha}\rho)^\perp \cdot \xi\rt)M_{1,0} = - \xi\cdot\lt(\nabla^\perp \log\rho + \nabla^\perp (-\Delta)^{-\alpha}\rho\rt)M_{\rho,0},
\]
it is natural to introduce
\[
{\rm u}_\e := - \e\lt(\nabla^\perp \log\rho + \nabla^\perp (-\Delta)^{-\alpha}\rho\rt),
\]
so that
\[
f^\e=M_{\rho,0}+\xi\cdot {\rm u}_\e\,M_{\rho,0}+O(\e^2).
\]
Using the same first-order Taylor expansion of $M_{\rho,{\rm u}_\e}$ around ${\rm u}=0$ as in the diffusive case, we formally obtain
\[
M_{\rho,{\rm u}_\e} = M_{\rho,0} +\xi\cdot {\rm u}_\e\,M_{\rho,0} +O\bigl(\e^2(1+|\xi|^2)M_{\rho,0}\bigr),
\]
and hence
\[
f^\e-M_{\rho,{\rm u}_\e} = O\bigl(\e^2(1+|\xi|^2)M_{\rho,0}\bigr).
\]

Thus, the formal expansion suggests that the density variable satisfies
\[
\rho^\e-\rho=O(\e^2),
\]
while, at the kinetic level, the shifted local Maxwellian $M_{\rho,{\rm u}_\e}$ provides a sharper second-order approximation of $f^\e$.

%
%
%
%
%
%
\section{Propagation of regularity for the limit equations}\label{app_reg}

In this appendix, we verify that the regularity assumptions imposed on the velocity field in the limiting equations \eqref{eq_agdi}, \eqref{eq_ag}, and \eqref{eq_sqg} are consistent, in the sense that they propagate (at least locally in time) under appropriate conditions on the initial data.

 We assume that the initial data satisfy
\[
\rho_0 \in H^m(\R^d), \quad m \gg 1, \quad 
\phi_0 := \nabla \log \rho_0 \in L^p \cap \dot H^s(\R^d),
\]
with $p>d$, $s \gg 1$, and $s \le m-2$. 
As an example, $\rho_0(x) := (1+|x|^2)^{-k}$ with $k \gg 1$ fulfills the above assumptions. 
Local existence of strong solutions $\rho \in L^\infty(0,T;H^m(\R^d))$ is guaranteed by \cite{CJe21-1}. 
We recall that the limiting equations can be written as
\[
\partial_t \rho + \nabla\cdot(\rho u) = \sigma \Delta \rho,
\]
where the precise form of $u$ depends on the equation under consideration, but in all cases it is comparable to $\nabla (-\Delta)^{-\alpha} \rho$ and in particular $u \in H^{m-1}(\R^d)$.

Introducing $\phi = \nabla \log \rho$, we write
\[
\pa_t \phi + \nabla (\phi \cdot u) + \nabla(\nabla\cdot u) = \sigma(\nabla (\nabla\cdot \phi) + \nabla(|\phi|^2)),
\]
where we used
\[
\nabla\cdot\phi = \nabla \cdot \lt(\frac{\nabla\rho}{\rho} \rt) = \frac{\Delta\rho}{\rho} - \frac{|\nabla\rho|^2}{\rho^2} =  \frac{\Delta\rho}{\rho} - |\phi|^2.
\]

\noindent
{\bf $L^p$-estimate.} A direct computation yields
\[\begin{aligned}
\frac1p\frac{d}{dt}\|\phi\|_{L^p}^p &= \intr |\phi|^{p-2}\phi \cdot (-\nabla u \cdot \phi - \nabla\phi \cdot u - \nabla(\nabla\cdot u) + \sigma \nabla(\nabla\cdot \phi) + \sigma \nabla(|\phi|^2))\,dx\\
&\le C\lt( \|\nabla u\|_{L^\infty}\|\phi\|_{L^p}  + \|\nabla \phi\|_{L^\infty}\|u\|_{L^p} + \|\nabla^2 u\|_{L^p}+\|\nabla\phi\|_{L^\infty}\|\phi\|_{L^p}\rt)\|\phi\|_{L^p}^{p-1}\\
&\quad + \sigma \intr |\phi|^{p-2}\phi \cdot \nabla(\nabla\cdot\phi)\,dx\\
&= C\lt( \|\nabla u\|_{L^\infty}\|\phi\|_{L^p}  + \|\nabla \phi\|_{L^\infty}\|u\|_{L^p} + \|\nabla^2 u\|_{L^p}+\|\nabla\phi\|_{L^\infty}\|\phi\|_{L^p}\rt)\|\phi\|_{L^p}^{p-1}\\
&\quad -\intr \nabla\cdot (|\phi|^{p-2}\phi) \nabla\cdot \phi\,dx\\
&\le  C(1+ \|\nabla u\|_{H^s})(1+\|\phi\|_{L^p \cap \dot{H}^s})^2\|\phi\|_{L^p \cap \dot{H}^s}^{p-2}.  
\end{aligned}\]

\noindent
{\bf $\dot H^s$-estimate.} Similarly, applying $\Lambda^s$ and testing against $\phi$ gives
\[\begin{aligned}
\frac12\frac{d}{dt}\|\Lambda^s \phi\|_{L^2}^2 + \sigma \|\Lambda^s(\nabla \cdot \phi)\|_{L^2}^2&= - \intr \Lambda^s \nabla (\phi\cdot u) \cdot \Lambda^s \phi\,dx - \intr \Lambda^s (\nabla (\nabla\cdot u)) \cdot \Lambda^s \phi \,dx \\
&\quad  +\sigma \intr \Lambda^s (\nabla |\phi|^2) \cdot \Lambda^s \phi\,dx\\
&=: I + II + III.
\end{aligned}\]
For $I$, we use the symmetry $\pa_j \phi_i = \pa_i \phi_j$ (since $\phi$ itself is a gradient of a certain function) to estimate
\[\begin{aligned}
I &= -\sum_{i,j=1}^d\intr \Lambda^s \pa_j \phi_i u_i \Lambda^s \phi_j\,dx  - \intr [\Lambda^s \nabla (\phi \cdot u) - (\Lambda^s \nabla \phi) \cdot u]\cdot \Lambda^s \phi\,dx\\
&=-\sum_{i,j=1}^d\intr \Lambda^s \pa_i \phi_j u_i \Lambda^s \phi_j\,dx  - \intr [\Lambda^s \nabla (\phi \cdot u) - (\Lambda^s \nabla \phi) \cdot u]\cdot \Lambda^s \phi\,dx\\
&\le \frac{\|\nabla \cdot u\|_{L^\infty}}{2}\|\phi\|_{\dot{H}^s}^2 + C(\|\nabla u\|_{L^\infty}\|\phi\|_{\dot{H}^s} + \|\nabla \phi\|_{L^\infty}\|u\|_{\dot{H}^s})\|\phi\|_{\dot{H}^s}.
\end{aligned}\]
The estimate of $II$ is obvious, and for $III$, we get
\[\begin{aligned}
III &= \sigma \intr ((\Lambda^s \nabla \phi)\cdot \phi)\cdot \Lambda^s \phi\,dx + \sigma \intr [\Lambda^s (\nabla |\phi|^2) - (\Lambda^s \nabla \phi)\cdot \phi]\cdot \Lambda^s\phi\,dx\\
&\le \sigma \|\phi\|_{L^\infty}\|\nabla \phi\|_{\dot{H}^s}\|\phi\|_{\dot{H}^s}+C\|\nabla\phi\|_{L^\infty}\|\phi\|_{\dot{H}^s}^2\\
&\le C\|\phi\|_{L^\infty}^2\|\phi\|_{\dot{H}^s}^2 + C\|\nabla\phi\|_{L^\infty}\|\phi\|_{\dot{H}^s}^2 + \frac\sigma2 \|\nabla \phi\|_{\dot{H}^s}^2.
\end{aligned}\]
Moreover, thanks to the symmetry of $\nabla \phi$, we have $\|\nabla \cdot \phi\|_{\dot{H}^s} = \|\nabla \phi\|_{\dot{H}^s}$. Thus, we gather all the estimates to obtain
\[
\frac{d}{dt}\|\phi\|_{\dot{H}^s}^2 \le C(1+\|\nabla u\|_{H^s})(1+\|\phi\|_{L^p \cap \dot{H}^s})^2 \|\phi\|_{\dot{H}^s}^2.
\]

  Collecting the above bounds, we arrive at the closed differential inequality
\[
\frac{d}{dt}\|\phi\|_{L^p \cap \dot H^s}^2 
\le C(1+\|\nabla u\|_{H^s})(1+\|\phi\|_{L^p \cap \dot H^s})^4.
\]
This shows that $\phi=\nabla\log\rho$ remains bounded in $L^p \cap \dot H^s(\R^d)$ for a short time, and hence the assumed regularity of the velocity field in our main results indeed propagates locally in time. 
Together with the local existence result of \cite{CJe21-1}, this confirms the consistency of the a priori regularity framework.

%
%
%
%

\bibliographystyle{abbrv}
\bibliography{CJ_rel_VFP}

\end{document}